\documentclass{amsart}[12]
\pagestyle{plain}
\usepackage[utf8]{inputenc}
\usepackage{amsfonts, amssymb, amsthm, amsmath, mathtools, graphicx, hyperref, caption, comment, color}
\usepackage{comment,tikz,epstopdf}
\usepackage{graphics}

\theoremstyle{definition}

\newtheorem{theorem}{Theorem}[section]
\newtheorem{corollary}[theorem]{Corollary}
\newtheorem{lemma}[theorem]{Lemma}

\newtheorem{definition}[theorem]{\rm{Definition}}
\newtheorem{proposition}[theorem]{Proposition}

\DeclareMathOperator{\Aut}{Aut}

\title{Topological symmetry groups of the Generalized Petersen Graphs}
\author{A. \'Alvarez}
\author{E. Flapan}
\author{M. Hunnell}
\author{J. Hutchens}
\author{E. Lawrence}
\author{P. Lewis}
\author{C. Price}
\author{R. Vanderpool}

\thanks{The second author was supported in part by NSF Grant DMS-1607744}

\address{Department of Mathematics, Embry-Riddle Aeronautical University}
\address{Department of Mathematics, Pomona College}
\address{Department of Mathematics, Winston Salem State University}
\address{Department of Mathematics and Statistics, University of San Francisco}
\address{Department of Mathematics and Statistics, University of San Francisco}
\address{Department of Mathematics, Houston Baptist University}
\address{Department of Mathematics, Smith College}
\address{School of Interdisciplinary Arts and Sciences, University of Washington Tacoma}

\keywords{topological  symmetry groups, spatial graphs, symmetries, Petersen graph}
\subjclass[2010]{57M15, 05C10, 92E10}

\thanks{}

\begin{document}

\maketitle

\begin{abstract} The topological symmetry group $\mathrm{TSG}(\Gamma)$ of an embedded graph $\Gamma$ in $S^3$ is the subgroup of the automorphism group of the graph which is induced by homeomorphisms of $(S^3,\Gamma)$.  If we restrict to orientation preserving homeomorphisms then we obtain the orientation preserving topological symmetry group $\mathrm{TSG}_+(\Gamma)$.  In this paper, we determine all groups that can be $\mathrm{TSG}(\Gamma)$ or $\mathrm{TSG}_+(\Gamma)$ for some embedding $\Gamma$ of a generalized Petersen graph other than the exceptional graphs $P(12,5)$ and $P(24, 5)$.\end{abstract}

\section{Introduction}\label{introduction}

The field of spatial graph theory was developed in part to classify the symmetries of flexible molecules \cite{Simon} and in part as an extension of the study of the symmetries of knots and links \cite{CG}.  However, in contrast with the symmetries of a knot or link, the symmetries of a spatial graph can be understood in terms of the automorphism group of its underlying graph. In particular, we have the following definitions.

\begin{definition} Let $\gamma$ be an abstract graph and $\Gamma$ be an embedding of $\gamma$ in $S^3$.  The \emph{topological symmetry group of $\Gamma$}, denoted by $\mathrm{TSG}(\Gamma)$, is the subgroup of the automorphism group $\mathrm{Aut}(\gamma)$ induced by homeomorphisms of the pair $(S^3,\Gamma)$.  If we only allow orientation preserving homeomorphisms, we obtain the \emph{orientation preserving topological symmetry group}, $\mathrm{TSG}_+(\Gamma)$.  \end{definition}

\begin{definition} Let $\gamma$ be an abstract graph and let $G$ be a subgroup of $\mathrm{Aut}(\gamma)$ such that for some embedding $\Gamma$ of $\gamma$ in $S^3$, $G=\mathrm{TSG}(\Gamma)$ or $G=\mathrm{TSG}_+(\Gamma)$.  Then we say that the group $G$ is \emph{realizable} or \emph{positively realizable}, respectively, for $\gamma$.\end{definition}

 Many previous results have been obtained about realizable and positively realizable groups for specific graphs (see for example \cite{CF},\cite{FL},\cite{FMN},\cite{FMNY}).  In this paper, we classify the groups that are realizable and positively realizable for the family of \emph{generalized Petersen graphs}.  In particular, for $2k < n$, the generalized Petersen graph $P(n,k)$ is obtained from an $n$-gon with consecutive vertices $u_1,u_2,\dots,u_n$ and a star (possibly with more than one loop) with vertices $v_1,v_2,\dots,v_n$ and edges $\overline{ v_i v_{i+k}}$, by adding an edge $\overline{u_iv_i}$ for each $i\leq n$ (see the graph $P(6,2)$ and an embedding of $P(7,2)$ in Figure~\ref{P62}).  We call the edges $\overline{u_iu_{i+1}}$ \emph{outer edges}, the edges $\overline{v_iv_{i+k}}$ \emph{inner edges}, and the edges $\overline{u_iv_i}$ \emph{spokes}.  Note that when $k^2\equiv \pm1 \pmod{n}$, then $k$ and $n$ are relatively prime and hence the star formed by the inner edges of $P(n,k)$ is a single cycle.

\begin{figure}[h!]
\begin{center}
\centering\includegraphics[scale=0.3]{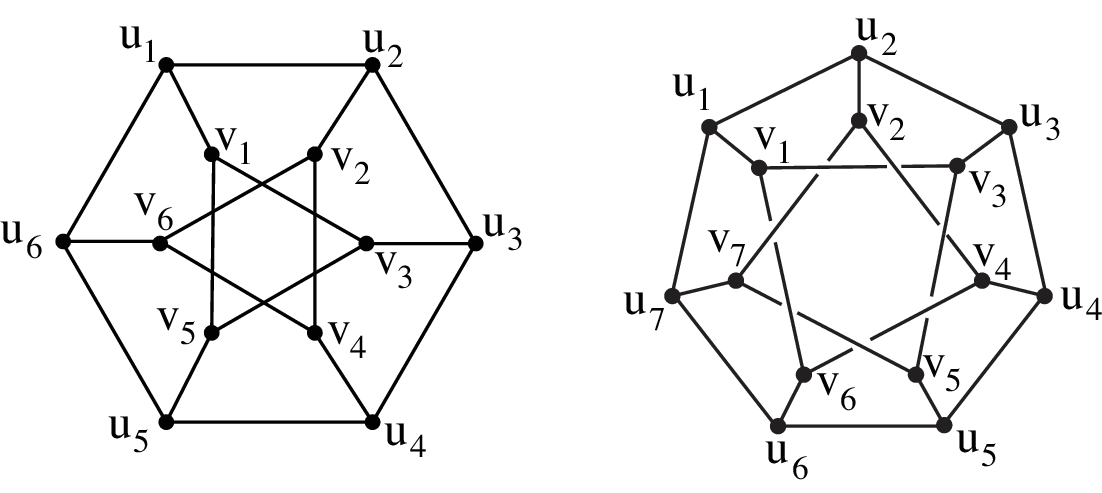}
\caption{The graph $P(6,2)$ and an embedding of $P(7,2)$.}
\label{P62}
\end{center}
\end{figure}

Let $B(n,k)$ denote the subgroup of $\mathrm{Aut}(P(n,k))$ which leaves the set of spokes $\{\overline{u_iv_i}| i\leq n\}$ invariant (either preserving or interchanging the inner and outer edges). Frucht, Graver, and Watkins \cite{FRU} proved that all but the seven exceptional pairs $(4,1)$, $(5,2)$, $(8,3)$, $(10,2)$, $(10,3)$, $(12,5)$, $(24,5)$  have $\mathrm{Aut}(P(n,k))=B(n,k)$.  Furthermore they showed that    
\[ B(n,k) = 
	\begin{cases}
	D_n, &k^2 \not\equiv \pm 1 \ \pmod{n} \\
	D_n \rtimes \mathbb{Z}_2, &k^2 \equiv 1  \ \pmod{n} \\
	\mathbb{Z}_n \rtimes \mathbb{Z}_4, &k^2 \equiv -1  \ \pmod{n}. \\
	\end{cases}
\]

We classify the realizable and positively realizable subgroups of $\mathrm{Aut}(P(n,k))$ for all non-exceptional pairs $(n,k)$ and for the exceptional pairs $(4,1)$, $(8,3)$, $(10,2)$, $(10,3)$.  For the exceptional pair $(5,2)$, the graph $P(5,2)$ is the \emph{Petersen graph} whose topological symmetry groups were determined by Chambers, Flapan, Heath, Lawrence, Thatcher, and Vanderpool \cite{CFLTV}.  Realizability and positive realizability for $P(12,5)$ and $P(24,5)$ will be considered in a subsequent paper. The exceptional graphs  $P(n,k)$ for $n\leq 10$ are illustrated in Figure~\ref{exceptional}.  

\begin{figure}[h!]
\begin{center}
\centering\includegraphics[scale=0.35]{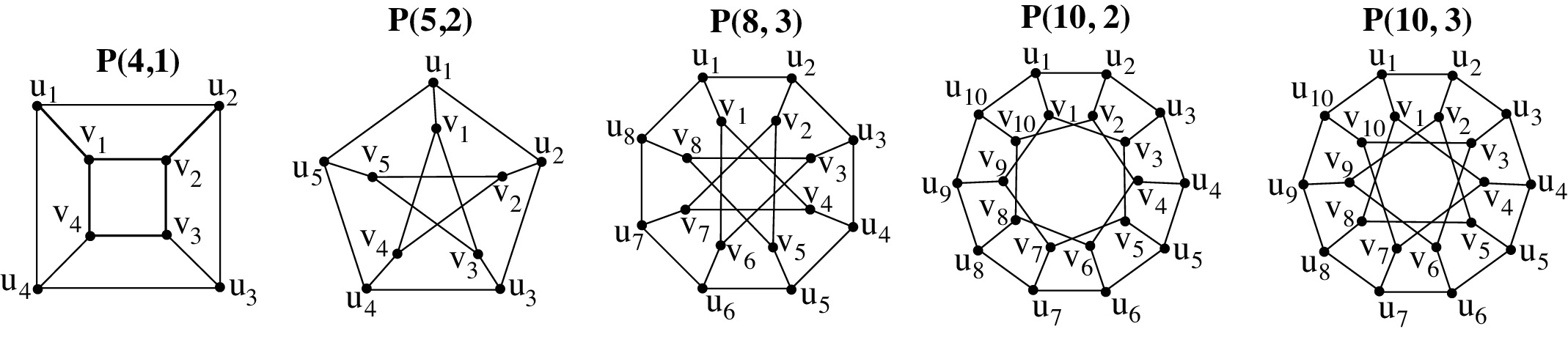}
\caption{The exceptional graphs $P(4,1)$, $P(5,2)$, $P(8,3)$, $P(10, 2)$, and $P(10,3)$.}
\label{exceptional}
\end{center}
\end{figure}

Our arguments make use of the above theorem of Frucht et al, as well as the theorems listed below.  However, for a large number of groups, we prove realizability or positive realizability by constructing intricate embeddings which we show have the required group of symmetries and not a larger group (the latter is often the difficult part).  These embeddings are described in the text, and augmented by detailed illustrations. Some readers will find these constructions to be   the most interesting part of the paper, since they can likely be generalized to other graphs.

If $G=\mathrm{TSG}_+(\Gamma)$ for some embedding $\Gamma$ of $P(n,k)$,  then we can add the same chiral invertible knot to every edge of $\Gamma$ to get an embedding $\Gamma'$ with $G=\mathrm{TSG}(\Gamma')$.  This means that if we can find an embedding of $P(n,k)$ such that $G=\mathrm{TSG}_+(\Gamma)$, then $G$ is both positively realizable and realizable for $P(n,k)$.  Note however, that $G$ may be realizable and not be positively realizable.  Also, by putting a distinct knot on each edge of $\Gamma$ we obtain an embedding $\Gamma''$ such that $\mathrm{TSG}(\Gamma'')=\mathrm{TSG}_+(\Gamma'')=\langle e\rangle$.  Thus the trivial group is realizable and positively realizable for every $P(n,k)$.  Hence we focus on  non-trivial groups.  

Since $P(n,k)$ is $3$-connected, we can use the following theorems.

\begin{theorem} [Automorphism Rigidity Theorem \cite{Flap}] \label{Rigid}
Let $G$ be a 3-connected graph. Suppose that an automorphism $\sigma$ of $G$ is realizable by a homeomorphism $h$ of some embedding of $G$ in $S^3$. Then $\sigma$ is realizable by a homeomorphism $f$ of finite order which is orientation reversing if and only if $h$ is orientation reversing.
\end{theorem}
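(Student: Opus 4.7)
The plan is to leverage 3-connectivity of $G$ together with the geometric rigidity of finite group actions on $S^3$ to upgrade $h$ to a finite-order representative realizing the same $\sigma$. The natural arena is the complement of a regular neighborhood of $\Gamma$, where 3-manifold topology supplies finite-order models within each isotopy class.

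First I would choose an $h$-invariant regular neighborhood $N$ of $\Gamma$ and set $X = \overline{S^3 \setminus N}$, so that $h$ restricts to a self-homeomorphism of a compact 3-manifold whose boundary is naturally partitioned into vertex spheres and edge annuli. Since $\sigma \in \Aut(G)$ has finite order, say $n$, the map $h^n$ preserves each vertex and each edge setwise and lies in a highly constrained mapping class. The key combinatorial input from 3-connectivity is that the vertex-sphere/edge-annulus decomposition of $\partial N$ is topologically recoverable from $X$, so that the action of $\sigma$ is faithfully encoded by $h|_X$ and cannot be altered by small isotopies.

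Next I would apply the Geometrization Theorem to the pieces of the JSJ decomposition of $X$. Each piece is either hyperbolic, where Mostow rigidity provides a finite-order isometric representative of the relevant mapping class, or Seifert-fibered, where the classification of fiber-preserving automorphisms does the same. Assembling these representatives equivariantly across the JSJ tori and annuli produces a finite-order $\tilde{h}$ on $X$ isotopic to $h$, and extending $\tilde{h}$ over $N$ by standard rigid models on vertex balls and edge tubes yields a finite-order homeomorphism $f$ of $(S^3,\Gamma')$ for some embedding $\Gamma'$ realizing the same abstract graph $G$. Because isotopies of $S^3$ are orientation preserving, $f$ is orientation reversing if and only if $h$ is, which gives the claimed dichotomy for free.

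The main obstacle is controlling two compatibilities simultaneously: that $f$ induces exactly $\sigma$ rather than an adjacent automorphism introduced during the isotopies, and that the local finite-order models on the JSJ pieces glue into a single global finite-order map with orders matched across gluing surfaces. Both hinge on 3-connectivity, which rigidifies the vertex/edge structure inside $X$ and anchors the isotopy throughout the JSJ patching. The equivariant gluing step --- matching rotation numbers and orientations of Mostow-rigid and Seifert-fibered models across essential tori and annuli --- is where I expect the deepest technical work to lie.
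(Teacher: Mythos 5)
First, a point of orientation: the paper does not prove this theorem at all --- it is imported verbatim from the reference \cite{Flap}, so there is no in-paper proof to compare against. Judged against the argument in that reference, your sketch captures the right general philosophy (pass to the exterior of a regular neighborhood, find a finite-order representative of the mapping class there, and reglue), but it has a genuine gap at its central step. You assert that the JSJ pieces of $X=\overline{S^3\setminus N}$ are hyperbolic or Seifert-fibered and then invoke Mostow rigidity or the classification of fiber-preserving maps. For an arbitrary embedding of a graph this is false: $\partial X$ is a surface of genus at least $2$ and is typically compressible --- for a planar embedding $X$ is a handlebody --- so $X$ need not be irreducible, need not be Haken, and need not admit a geometric decomposition of the kind you describe. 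Handlebodies and compression bodies have infinite mapping class groups to which neither Mostow rigidity nor Seifert-fibered rigidity applies, so the finite-order representative you want does not fall out of geometrization on the naked exterior. The actual proof has to confront exactly this: it works with the exterior equipped with a boundary pattern (vertex spheres and edge annuli) in the sense of Johannson, and the hypothesis that $G$ is $3$-connected is what guarantees this boundary pattern is \emph{useful}, which is the substitute for incompressibility that makes the characteristic-submanifold and rigidity machinery applicable. In your sketch, $3$-connectivity appears only as a vague ``rigidifying'' slogan, whereas it is the load-bearing hypothesis.

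A second, related gap: you pass from ``$\sigma$ has finite order $n$'' to treating $h$ as if it were periodic up to isotopy on $X$. But $h^n$ induces the identity automorphism of the graph without being isotopic to the identity on $(S^3,\Gamma)$ --- it can, for instance, perform Dehn twists along compressing disks or annuli in the exterior. Establishing that $h$ can be replaced by a representative that is genuinely periodic in the mapping class group of the patterned exterior (before any realization theorem is invoked) is a necessary intermediate step, and it is also where the freedom to change the embedding $\Gamma$ to $\Gamma'$ enters. Your final orientation argument is fine once a finite-order $f$ agreeing with $h$ up to isotopy on a codimension-zero piece is in hand, but the construction of that $f$ is precisely the part your outline does not yet support.
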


The above theorem allows us to assume realizable automorphisms are induced by finite order homeomorphisms of $S^3$, and then use the following simplified version of a theorem of P. A. Smith~\cite{SMI}.  We let $\mathrm{fix}(h)$ denote the fixed point set of $h$.

\begin{theorem}[Smith Theory] Let $h$ be a finite order homeomorphism of $S^3$.  If $h$ is orientation preserving then $\mathrm{fix}(h)$ is either the empty set or $S^1$, and if $h$ is orientation reversing then $\mathrm{fix}(h)$ is either two points or $S^2$.
\end{theorem}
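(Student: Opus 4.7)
The plan is to deduce the statement from P.~A.~Smith's classical theorem: if a cyclic group of prime order $p$ acts on a mod-$p$ homology $n$-sphere, then its fixed point set is a mod-$p$ homology $r$-sphere for some $-1 \le r \le n$ (with the convention that $S^{-1}$ denotes the empty set). I would first reduce the composite-order case to the prime-order case: if $h$ has order $n=pm$ with $p$ prime, then $\mathrm{fix}(h) \subseteq \mathrm{fix}(h^m)$, where $h^m$ has order $p$, and $h$ then acts as a finite-order homeomorphism of $\mathrm{fix}(h^m)$; one can then induct on the order.

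Next I would examine the local structure of $h$ near a fixed point. Using the fact that a finite-order homeomorphism of $S^3$ is locally conjugate to an orthogonal linear action near each fixed point (a consequence of Smith's work together with equivariant neighborhood results), the conjugacy class of the local model in $O(3)$ determines the local dimension of $\mathrm{fix}(h)$: a nontrivial element of $SO(3)$ is a rotation fixing a 1-dimensional axis, a reflection in $O(3)$ fixes a 2-dimensional plane, and a rotation–reflection of determinant $-1$ with no nontrivial fixed direction fixes only the origin. Hence $\mathrm{fix}(h)$ is a closed manifold whose local dimension is controlled by these linear models.

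Finally I would separate by orientation. If $h$ is orientation preserving and not the identity, the local model at every fixed point lies in $SO(3)$ and is a nontrivial rotation, so $\mathrm{fix}(h)$ is locally 1-dimensional wherever nonempty; combined with Smith's theorem it is either empty or a closed 1-manifold with the mod-$p$ homology of $S^1$, forcing $\mathrm{fix}(h) \cong S^1$. If $h$ is orientation reversing, the local model has determinant $-1$, so $\mathrm{fix}(h)$ is locally either a 2-manifold or an isolated point. The Lefschetz fixed point theorem applied to the degree $-1$ self-map of $S^3$ gives Lefschetz number $1-(-1)=2\neq 0$, so $\mathrm{fix}(h)$ is nonempty, and by Smith together with the classification of closed surfaces it is either two points or a $2$-sphere.

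The main obstacle is the local linearization step for merely topological (as opposed to smooth) finite-order actions, which is what lets us promote the conclusion of Smith's theorem from an abstract mod-$p$ homology sphere to a genuine manifold of the stated form; everything else is either a direct application of Smith's theorem, a Lefschetz calculation, or the classification of $0$-, $1$-, and $2$-manifolds of the appropriate Euler characteristic.
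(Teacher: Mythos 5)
First, a point of comparison: the paper does not prove this statement at all --- it is quoted as a known theorem of P.~A.~Smith (the reference [SMI]) and used as a black box. So there is no in-paper argument to measure your proposal against; the question is only whether your sketch would actually establish the result.

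It would not, as written, and the problem is precisely the step you flag as ``the main obstacle'': local linearization at fixed points is \emph{false} for finite-order homeomorphisms of $S^3$ in the topological category, which is the category the paper needs (the Automorphism Rigidity Theorem only produces finite-order \emph{homeomorphisms}, not diffeomorphisms). Bing constructed an involution of $S^3$ whose fixed point set is a wildly embedded $2$-sphere; at a wild point the involution cannot be locally conjugate to a linear reflection, since that would force the fixed set to be locally flat there. Montgomery and Zippin likewise produced orientation-preserving periodic homeomorphisms whose fixed circles are wild. So you cannot conclude from linear models that $\mathrm{fix}(h)$ is a manifold of the predicted local dimension, and the later appeals to the classification of closed $1$- and $2$-manifolds then have nothing to stand on. The conclusion of the theorem is nevertheless true: Smith's actual argument is homological (the fixed set is a mod-$p$ \v{C}ech cohomology $r$-sphere), supplemented by dimension-three-specific point-set arguments that identify this set with a genuine, possibly wildly embedded, $S^r$ --- no local linearization is ever invoked. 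The parts of your sketch that do survive are the reduction to prime order via $\mathrm{fix}(h)\subseteq\mathrm{fix}(h^m)$ and the Lefschetz computation $L(h)=1-(-1)=2\neq 0$ forcing an orientation-reversing $h$ to have a fixed point. If you were willing to restrict to smooth or PL actions your outline would be essentially correct (indeed, by the geometrization of finite group actions every smooth finite-order diffeomorphism of $S^3$ is conjugate to an orthogonal map, which makes the whole statement immediate), but that restriction is not available in the setting where the paper applies the theorem.
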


We now list several other results that will be used.  Note that $\mathrm{SO}(4)$ is the group of orientation preserving isometries of $S^3$ and $\mathrm{SO}(3)$ is the group of orientation preserving isometries of $S^2$.  The Group Rigidity Theorem below follows from Proposition~3 of \cite{FNPT} together with the Geometrization Theorem \cite{MOR}.  
\begin{theorem}[Group Rigidity Theorem]  \label{GroupRigid} Let $\gamma$ be a $3$-connected graph and $G\leq\mathrm{TSG}_+(\Lambda)$ for some embedding $\Lambda$ of $\gamma$ in $S^3$.  Then there is an embedding $\Gamma$ of $\gamma$ in $S^3$ such that $G\leq \mathrm{TSG}_+(\Gamma)$ and $\mathrm{TSG}_+(\Gamma)$ is induced by an isomorphic finite subgroup of $\mathrm{SO}(4)$. 
\end{theorem}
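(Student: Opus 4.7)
The plan is to combine the lifting statement of Proposition~3 of \cite{FNPT}, the Geometrization Theorem \cite{MOR}, and a short maximality argument to upgrade $G$ first to a finite isometric action on $S^3$ and then to the full $\mathrm{TSG}_+$ of a suitable embedding.

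I would first record a ``basic construction'': for every realizable subgroup $G' \leq \mathrm{Aut}(\gamma)$ there is an embedding $\Gamma'$ of $\gamma$ such that $G' \leq \mathrm{TSG}_+(\Gamma')$ and $G'$ is induced on $\Gamma'$ by an isomorphic finite subgroup of $\mathrm{SO}(4)$. Since $\gamma$ is $3$-connected, the Automorphism Rigidity Theorem realizes each element of $G'$ by a finite-order orientation-preserving homeomorphism of $S^3$, and Proposition~3 of \cite{FNPT} assembles these into a finite subgroup $\widetilde{G}' \leq \mathrm{Homeo}_+(S^3)$ that preserves some embedding $\Gamma_0$ of $\gamma$ and maps isomorphically onto $G'$ via induced automorphisms. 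The Geometrization Theorem then implies that any finite subgroup of $\mathrm{Homeo}_+(S^3)$ is topologically conjugate to a subgroup $H' \leq \mathrm{SO}(4)$; conjugating $\Gamma_0$ by this homeomorphism yields the required $\Gamma'$.

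With the basic construction in hand, I would consider the family of pairs $(H, \Gamma)$ with $G \leq H \leq \mathrm{TSG}_+(\Gamma)$ and $H$ induced on $\Gamma$ by an isomorphic subgroup of $\mathrm{SO}(4)$. The hypothesis together with the basic construction (applied to $G'=G$) shows this family is non-empty, and since $\mathrm{Aut}(\gamma)$ is finite I can choose a pair $(H^\ast, \Gamma^\ast)$ with $|H^\ast|$ maximal. The key step is then to verify that $\mathrm{TSG}_+(\Gamma^\ast) = H^\ast$: if it were strictly larger, $\mathrm{TSG}_+(\Gamma^\ast)$ would itself be a realizable group containing $G$, and a second application of the basic construction would produce a pair with larger $|H|$, contradicting maximality. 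Hence $\Gamma^\ast$ is the desired embedding.

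The principal obstacle is the first step of the basic construction: promoting the finite-order homeomorphisms provided by the Automorphism Rigidity Theorem (one per element of $G'$, chosen independently) to a single compatible finite group action on $S^3$. This is exactly the content of Proposition~3 of \cite{FNPT}, whose proof uses orbifold techniques and depends in an essential way on the Geometrization Theorem. Once the lift is in place, the conjugation into $\mathrm{SO}(4)$ and the maximality argument are essentially formal.
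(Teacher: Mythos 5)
Your proposal is correct and follows the route the paper itself indicates: the paper offers no proof beyond citing Proposition~3 of \cite{FNPT} together with the Geometrization Theorem, and your ``basic construction'' followed by the finiteness-of-$\mathrm{Aut}(\gamma)$ maximality argument is exactly the standard way to upgrade the conclusion from $G$ being induced by isometries to all of $\mathrm{TSG}_+(\Gamma)$ being so induced. The only cosmetic remark is that Proposition~3 of \cite{FNPT} already assembles the finite group action on a re-embedding in one step, so your preliminary element-by-element appeal to the Automorphism Rigidity Theorem is harmless but redundant.
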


\begin{theorem}[Involution Theorem  \cite{FNT}]\label{involution} Let $G\leq SO(4)$ such that for every involution $g\in G$, we have $\mathrm{fix}(g)\cong S^1$ and no $h\in G$  with $g\neq h$ has $\mathrm{fix}(h)=\mathrm{fix}(g)$.  Then $G$ is a subgroup of $D_m\times D_m$ for some odd $m$ or is a finite subgroup of $\mathrm{SO}(3)$. \end{theorem}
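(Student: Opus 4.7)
The plan is to study $G$ through the standard quotient identification $\mathrm{SO}(4)/\{\pm I\} \cong \mathrm{SO}(3)\times\mathrm{SO}(3)$, using the quaternion model $S^3\subset\mathbb{H}$ in which a lift $(p,q)\in S^3\times S^3$ of an element of $\mathrm{SO}(4)$ acts on $\mathbb{H}$ by $x\mapsto pxq^{-1}$.  First I would observe that the antipodal map $-I\in\mathrm{SO}(4)$ has empty fixed set on $S^3$, so the hypothesis that every involution of $G$ has $\mathrm{fix}\cong S^1$ immediately forbids $-I\in G$.  Consequently the projection $\mathrm{SO}(4)\to\mathrm{SO}(3)\times\mathrm{SO}(3)$ restricts to an injection on $G$, and from now on I would view $G$ as a subgroup of $\mathrm{SO}(3)\times\mathrm{SO}(3)$.

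Next I would classify the possible involutions.  An element $(a,b)\in\mathrm{SO}(3)\times\mathrm{SO}(3)$ is an involution iff each of $a,b$ is either trivial or a half-turn.  A short quaternion computation shows that if exactly one of $a,b$ is trivial, say lifting to $(p,1)$ with $p$ a pure imaginary unit quaternion, then the equation $px=x$ has only $x=0$ in $\mathbb{H}$, giving empty fix set on $S^3$; whereas if both lift to pure imaginary unit quaternions $(p,q)$, the equation $px=xq$ cuts out a real $2$-plane in $\mathbb{H}$ and the fix set in $S^3$ is a great circle.  Hence the first hypothesis is equivalent to requiring that every involution of $G$ project to a nontrivial half-turn in each factor of $\mathrm{SO}(3)\times\mathrm{SO}(3)$.

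With this in hand, I would apply Goursat's Lemma to describe $G$ as a fiber product over isomorphic quotients of its projections $G_1,G_2\leq\mathrm{SO}(3)$ by normal subgroups $N_i\triangleleft G_i$; the previous step forces $N_1$ and $N_2$ to contain no involutions.  The second hypothesis translates, via Smith theory applied to rotations of $S^3$ around a fixed circle, into the statement that for each involution $g\in G$ the pointwise stabilizer of $\mathrm{fix}(g)$ in $G$ is exactly $\{e,g\}$, ruling out any element of order greater than $2$ whose fixed circle coincides with that of an involution of $G$.  Since $\mathrm{fix}(a,b)$ on $S^3$ is determined by the axes of $a$ and $b$ in $\mathbb{R}^3$, this produces strong incidence restrictions among the axes of involutions in $G_1$ and $G_2$.

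The main obstacle will be the ensuing case analysis over the ADE list of finite subgroups of $\mathrm{SO}(3)$---cyclic, dihedral, tetrahedral $A_4$, octahedral $S_4$, icosahedral $A_5$---for the allowed pairs $(G_1,G_2)$.  The polyhedral groups contain Klein four-subgroups of commuting involutions whose axes are mutually orthogonal, and pairing them across the two factors tends to create product elements whose fix circles coincide or are pointwise fixed by a larger cyclic subgroup, violating the second hypothesis; similarly a dihedral group $D_{2m}$ has a central involution whose distinguished axis creates the same kind of collision.  Ruling out each such configuration should leave only two survivors: the diagonal embedding into $\{(g,g):g\in\mathrm{SO}(3)\}\cong\mathrm{SO}(3)$, giving the second conclusion, or $G_1,G_2\leq D_m$ with $m$ odd, giving $G\leq D_m\times D_m$.
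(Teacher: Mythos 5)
First, a point of orientation: the paper does not prove this statement. The Involution Theorem is imported verbatim from the cited reference \cite{FNT} and used as a black box (e.g.\ in the proof of Theorem~\ref{order4} and in the non-realizability results for $P(10,3)$), so there is no in-paper argument to compare yours against. That said, your overall strategy --- pass through the double cover $S^3\times S^3\to \mathrm{SO}(4)$, observe that the first hypothesis forces $-I\notin G$ so that $G$ injects into $\mathrm{SO}(3)\times\mathrm{SO}(3)$, describe fixed circles in terms of the axes of the two quaternion factors, and then run Goursat's Lemma against the classification of finite subgroups of $\mathrm{SO}(3)$ --- is the right skeleton for a proof of this result.

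As written, though, the proposal has one conceptual slip and a genuine gap at the decisive step. The slip: an element of $\mathrm{SO}(4)$ lifting to $(p,1)$ with $p$ purely imaginary is not an involution of $\mathrm{SO}(4)$; it squares to $-I$. So hypothesis~(1) is not ``equivalent to every involution projecting to a half-turn in each factor'' --- its entire content is $-I\notin G$, after which every involution of $G$ automatically has a great-circle fixed set. (The correct route to your conclusion that the Goursat kernels $N_1,N_2$ contain no involutions, hence are odd cyclic, is that a half-turn in $N_1$ would pull back to an element of $G$ whose square is $-I$.) The gap: the final case analysis is only asserted to ``leave only two survivors,'' and the asserted survivors are mischaracterized. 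The theorem's conclusion concerns the abstract isomorphism type of $G$, not its projections. For example, take $n,e>1$ odd and let $\pi(G)$ be the index-two fiber product inside $D_n\times\mathbb{Z}_{2e}$ over the common quotient $\mathbb{Z}_2$; one checks that this lifts to a subgroup of $\mathrm{SO}(4)$ satisfying both hypotheses (the only elements of $D_n$ whose rotation axis agrees with that of a given half-turn are the identity and that half-turn, so no fixed circle is shared), yet the projection $G_2=\mathbb{Z}_{2e}$ is not a subgroup of any $D_m$ with $m$ odd. This $G$ is nevertheless abstractly $D_n\times\mathbb{Z}_e\le D_{ne}\times D_{ne}$, so the theorem holds for it --- but not via your proposed dichotomy ``diagonal, or $G_1,G_2\le D_m$.'' Translating hypothesis~(2) into axis-incidence constraints, eliminating the polyhedral and even-dihedral configurations, and then re-embedding each surviving fiber product into some $D_m\times D_m$ with $m$ odd or identifying it with a finite subgroup of $\mathrm{SO}(3)$ is precisely the part of the proof that remains to be written.
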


 \begin{theorem} (Subgroup Theorem \cite{FMN}) \label{SubgroupTheorem}
Let $\Gamma$ be a $3$-connected graph embedded in $S^3$. Suppose $\Gamma$ contains an edge whose vertices are not both fixed by a non-trivial element of $\mathrm{TSG}_+(\Gamma)$.  Then for every $H\leq \mathrm{TSG}_+(\Gamma)$ there is an embedding $\Gamma'$ of $\Gamma$ in $S^3$ with $H=\mathrm{TSG}_+(\Gamma')$.
\end{theorem}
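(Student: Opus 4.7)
The plan is to construct $\Gamma'$ from $\Gamma$ by equivariantly attaching distinct local knots to the edges of different $H$-orbits, so that all $H$-symmetries survive while the complementary symmetries of $\Gamma$ are killed.

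First, apply the Group Rigidity Theorem to replace $\Gamma$ by an embedding on which $G=\mathrm{TSG}_+(\Gamma)$ acts through a finite subgroup of $\mathrm{SO}(4)$. Let $O_1,\dots,O_m$ be the edge orbits of $H$, with $e_0\in O_1$ the special edge from the hypothesis, and choose pairwise distinct chiral invertible prime knots $K_1,\dots,K_m$. For each $i$ pick a representative $f_i\in O_i$ (taking $f_1=e_0$), choose a small ball $B_i$ meeting $\Gamma$ only in an interior arc of $f_i$, and tie a $K_i$-tangle into that arc; then propagate via the $H$-action to obtain an $H$-equivariant knotted embedding $\Gamma'$. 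By the hypothesis, $\mathrm{Stab}_G(e_0)$ is trivial or generated by a single endpoint-swap; invertibility of $K_1$ together with symmetric placement handles the swap case when the swap lies in $H$, while an asymmetric placement can be used whenever $\mathrm{Stab}_H(e_0)$ is trivial.

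The inclusion $H\le\mathrm{TSG}_+(\Gamma')$ is immediate from the equivariance of the construction. For the reverse, suppose $\sigma\in\mathrm{TSG}_+(\Gamma')$. Because the $K_i$ are pairwise distinct prime knots, $\sigma$ must preserve every $O_i$ setwise. An equivariant unknotting argument, shrinking each local tangle in $B_i$ back to a trivial arc and invoking the Automorphism Rigidity Theorem, upgrades $\sigma$ to an element of $G$. Using $H$-transitivity on $O_1$, choose $h\in H$ with $h(e_0)=\sigma(e_0)$, so that $h^{-1}\sigma\in G$ fixes $e_0$ setwise; the hypothesis then forces $h^{-1}\sigma$ to be trivial or to swap the endpoints of $e_0$. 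The trivial case immediately gives $\sigma\in H$. In the swap case, either the swap already lies in $\mathrm{Stab}_H(e_0)\le H$ (again giving $\sigma\in H$), or else $\mathrm{Stab}_H(e_0)$ is trivial, the $K_1$-tangle was placed on $e_0$ asymmetrically, and the swap would move the tangle off its location, contradicting $h^{-1}\sigma\in\mathrm{TSG}_+(\Gamma')$.

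The main obstacle is the unknotting step that upgrades $\sigma\in\mathrm{TSG}_+(\Gamma')$ to an element of $G$: a priori, the added knots could create exotic new topological symmetries. One argues that the tangles live in disjoint balls equivariantly permuted by $\sigma$ (forced by pairwise distinctness of prime knots), so each tangle can be equivariantly contracted to a trivial arc, and via the Automorphism Rigidity Theorem the resulting homeomorphism can be replaced by a finite-order homeomorphism of $(S^3,\Gamma)$ inducing the same $\sigma$. Once $\sigma\in G$ is in hand, the hypothesis on $e_0$ is exactly the tool needed to refine ``preserves every $H$-orbit'' down to ``lies in $H$''.
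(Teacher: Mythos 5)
The paper gives no proof of this statement---it is quoted from \cite{FMN}---so there is no internal argument to compare against; your overall strategy (equivariant local knotting, an unknotting/rigidity step, then using the special edge to cut the symmetry group down to $H$) is the right one in spirit. But one step of your construction is impossible in general: you cannot tie a knot equivariantly into a representative of \emph{every} $H$-orbit of edges. If a non-trivial $h\in H$ pointwise fixes an edge $f_i$ (this really happens for generalized Petersen graphs; see the order-$2$ automorphisms of $P(10,3)$ in Lemma~\ref{neg246}, which pointwise fix two triads), then after rigidifying, $f_i\subseteq\mathrm{fix}(\hat h)\cong S^1$, and an $\hat h$-invariant arc replacing $f_i\cap B_i$ with fixed endpoints is forced to be pointwise fixed (a finite-order homeomorphism of an arc fixing both endpoints is the identity), hence lies in the circle $\mathrm{fix}(\hat h)$ and is unknotted. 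The hypothesis of the theorem only controls the single edge $e_0$; the construction should knot only the orbit $H(e_0)$, which is in fact all that your endgame uses, since you only ever invoke preservation of $O_1$.

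The second genuine error is the ``asymmetric placement'' device for excluding the endpoint swap $\tau$ of $e_0$ when $\tau\in\mathrm{TSG}_+(\Gamma)\setminus H$. The position of a local knot along an edge is not a topological invariant---it can be slid along the edge by an ambient isotopy supported near $e_0$---so a homeomorphism inducing $\tau$ is in no way obstructed by where the tangle ball sits. Since you chose $K_1$ invertible, $\tau$ survives into $\mathrm{TSG}_+(\Gamma')$ whenever it preserves the knotted orbits, and then $H\neq\mathrm{TSG}_+(\Gamma')$. The correct device is to orient the local knots consistently with the $H$-action and take $K_1$ \emph{non-invertible} exactly when no element of $H$ swaps the endpoints of $e_0$ (and invertible otherwise). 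Finally, be aware that the ``equivariant unknotting'' step---that adding local knots creates no new symmetries, so $\mathrm{TSG}_+(\Gamma')$ descends into the symmetry group of the unknotted embedding---is the principal technical content of \cite{FMN} and is asserted rather than argued here; moreover, after applying the Group Rigidity Theorem the ambient $\mathrm{TSG}_+$ may strictly increase, so the hypothesis on $e_0$, stated for the original embedding, must be re-justified for the rigidified one before you can conclude that $h^{-1}\sigma$ is trivial or an endpoint swap.
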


\begin{lemma}\label{edgeembeddinglemma}
(Edge Embedding Lemma \cite{FMN}) 
Let $G$ be a finite group of diffeomorphisms of $S^3$ and let $\gamma$ be a graph whose vertices are embedded in $S^3$ as a set $W$ such that $G$ induces a faithful action on $W$ (i.e., no non-trivial element of $G$ induces the identity on $W$). Let $Y$ denote the union of the fixed point sets of all of the non-trivial elements of $G$.  Suppose that the vertices in $W$ satisfy the following:

\begin{enumerate}
\item
 If a pair of adjacent vertices is pointwise fixed by non-trivial elements $h$, $g\in G$, then $\mathrm{fix}(h)=\mathrm{fix}(g)$.
\item
No pair of adjacent vertices is interchanged by an element of $G$.
\item Any pair of adjacent vertices that is pointwise fixed by a non-trivial $g\in G$ bounds an arc in $\mathrm{fix}(g)$ whose interior is disjoint from $W\cup (Y-\mathrm{fix}(g))$.
\item Every pair of adjacent vertices which are not both fixed by a non-trivial element of $G$ is contained in a single component of $S^3-Y$.
\end{enumerate}
Then there is an embedding $\Gamma$ of the graph $\gamma$ in $S^3$ such that $\Gamma$ is setwise invariant under $G$.  
\end{lemma}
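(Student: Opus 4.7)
The plan is to build the edges one $G$-orbit of adjacent vertex pairs at a time. By hypothesis (2), the setwise stabilizer in $G$ of any adjacent pair $\{v,w\}$ pointwise fixes both $v$ and $w$ (it cannot swap them), so each orbit falls into one of two classes: \emph{type A}, where $\{v,w\}$ is pointwise fixed by some non-trivial element of $G$, or \emph{type B}, where the stabilizer is trivial.

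For a type A orbit I would pick a representative pair $\{v,w\}$. Condition (1) forces all non-trivial elements pointwise fixing $\{v,w\}$ to share a common fixed set $F$, and condition (3) supplies an arc $\alpha\subseteq F$ from $v$ to $w$ whose interior is disjoint from $W\cup(Y-F)$. Because every element of the stabilizer of $\{v,w\}$ has fixed set equal to $F$, it pointwise fixes $\alpha$; thus the translates of $\alpha$ by any set of coset representatives for the stabilizer form a well-defined $G$-invariant family of arcs, one for each pair in the orbit. For a type B orbit, condition (4) places the representative pair $\{v,w\}$ in a single component $C$ of $S^3-Y$, where I would choose an arbitrary arc between $v$ and $w$; with trivial stabilizer, the $|G|$ translates then realize an arc for every pair in the orbit.

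The remaining issue, and in my view the main obstacle, is to arrange the chosen arcs so that they meet only at common endpoints. Type A arc interiors lie in $Y$ and, by condition (3), in exactly one stratum $F$, while type B arc interiors lie in $S^3-Y$; thus the two families never interfere, and type A orbits with distinct fixed sets are also automatically disjoint in their interiors. The delicate cases are (i) multiple type A orbits sharing the same two-dimensional $F\cong S^2$, which I would resolve by a general-position argument inside $F$ carried out equivariantly with respect to the induced action of the setwise stabilizer of $F$, and (ii) multiple type B orbits, which I would separate by an equivariant perturbation of representative arcs inside the relevant components of $S^3-Y$ and then transport by $G$. When $F\cong S^1$, condition (3) together with the assumption that arcs avoid $W$ already prevents interleaving of endpoints. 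Carrying out the simultaneous equivariant general-position adjustments while preserving all earlier constraints is the technical heart of the argument.
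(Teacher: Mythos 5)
First, a point of order: the paper does not prove this lemma at all --- it is quoted from \cite{FMN} and used as a black box, so there is no in-paper proof to compare against. The comparison below is therefore with what a complete proof (such as the one in that reference) has to accomplish. Your setup is correct and is the standard one: condition (2) shows the stabilizer of an adjacent pair acts trivially on the pair, the orbits split into your type A and type B, condition (1) gives a single fixed set $F$ for a type A pair, the stabilizer pointwise fixes the arc $\alpha\subseteq F$ from condition (3), so the translates $g\alpha$ depend only on cosets and form a $G$-invariant family; and type A interiors lie in $Y$ while type B interiors lie in $S^3-Y$, so the two families cannot meet.

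The gap is exactly where you locate it, but two of your specific claims there would fail as stated. First, type A arcs lying in distinct fixed sets $F=\mathrm{fix}(g)$ and $F'=\mathrm{fix}(g')$ are not ``automatically disjoint'': condition (3) only keeps the interior of the arc in $F$ off of $Y-F$, and points of $F\cap F'$ belong to $F$, so nothing yet prevents the two arcs from crossing there --- and when $F\cong S^1$ the arc cannot be perturbed within $F$ to avoid such a point. Second, for several type A orbits sharing a two-dimensional $F\cong S^2$, ``general position inside $F$'' does not help: transverse arcs in a surface still cross in points, and joining finitely many pairs of points in a sphere by pairwise disjoint arcs is a planarity question, not a transversality one; if the edges you insist on routing through $F$ contained a nonplanar configuration, your construction would be impossible, so a real proof must either use the hypotheses to exclude this or exploit the fact that an edge whose endpoints are fixed by $g$ need only be setwise (not pointwise) $g$-invariant and may leave $\mathrm{fix}(g)$. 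Third, for type B orbits the clean way to get simultaneous embedded disjoint translates is to note that $G$ acts freely on $S^3-Y$, pass to the manifold quotient $(S^3-Y)/G$, embed the remaining edges there by general position in a $3$-manifold, and lift; your ``equivariant perturbation then transport by $G$'' is a gesture at this, but without the quotient (or an equivalent device) you have not shown that a representative arc and its images under the stabilizer of its component can be made disjoint at once. As written, the proposal is a correct skeleton whose load-bearing steps remain open.
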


We also use the website https://people.maths.bris.ac.uk/~matyd/GroupNames/ and the software Sage  to identify the isomorphism classes of automorphism groups and their subgroups.

\section{Realizability of $D_n$ and its subgroups for all $P(n,k)$}

\begin{theorem} \label{Thm:Gen_Dn}  For every pair $(n,k)$, the dihedral group $D_n$ and all of its subgroups are positively realizable and hence realizable for $P(n,k)$. 
\end{theorem}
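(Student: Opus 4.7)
The plan is to produce one embedding $\Gamma$ of $P(n,k)$ in $S^3$ with $D_n \le \mathrm{TSG}_+(\Gamma)$ and then invoke the Subgroup Theorem (Theorem~\ref{SubgroupTheorem}) to realize every subgroup $H \le D_n$ as $\mathrm{TSG}_+(\Gamma')$ for some embedding $\Gamma'$; realizability will then follow from the knot-adding trick mentioned in the introduction.

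To construct $\Gamma$, I would realize $D_n$ as a subgroup of $SO(4)$ acting on $S^3 \supset \mathbb{R}^3$ by letting $\rho$ be rotation by $2\pi/n$ about the $z$-axis and $\tau$ be rotation by $\pi$ about the $x$-axis. Place $u_i=(R\cos(2\pi i/n),R\sin(2\pi i/n),0)$ on a circle of radius $R$ and $v_i=(r\cos(2\pi i/n),r\sin(2\pi i/n),0)$ on a concentric circle of smaller radius $r$ in the $xy$-plane. Then $\rho(u_i)=u_{i+1}$, $\tau(u_i)=u_{-i}$, and similarly for $v_i$, which realizes precisely the standard $D_n \le B(n,k)$ action on vertices. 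Take the outer edges as arcs of the outer circle between consecutive $u_i$ and the spokes as straight radial segments. For the inner edges, observe that the stabilizer of $\overline{v_0v_k}$ in $D_n$ is the order-two group generated by $\tau\rho^{-k}$, whose fixed axis in $\mathbb{R}^3$ is the perpendicular bisector in the $xy$-plane of the chord from $v_0$ to $v_k$. Route $\overline{v_0v_k}$ as a stabilizer-symmetric arc meeting this bisector and perturbed off the $xy$-plane so that its $n$ translates under $\rho$ lie on nested layers around the $z$-axis and are pairwise disjoint and disjoint from the spokes, then extend by the $D_n$-action to the remaining inner edges.

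By construction $D_n \le \mathrm{TSG}_+(\Gamma)$. To apply the Subgroup Theorem, take the outer edge $\overline{u_0u_1}$: non-trivial rotations $\rho^j$ fix no vertex, dihedral reflections fix at most two outer vertices which are diametrically opposite rather than adjacent on the outer cycle, and the inner-outer exchanges present in $B(n,k)$ when $k^2\equiv\pm 1\pmod{n}$ move outer vertices to inner ones. Thus no non-trivial element of $\mathrm{Aut}(P(n,k))\supseteq \mathrm{TSG}_+(\Gamma)$ pointwise fixes $\{u_0,u_1\}$, and an analogous case check handles the exceptional pairs $(4,1)$, $(8,3)$, $(10,2)$, $(10,3)$. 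Theorem~\ref{SubgroupTheorem} then yields, for every $H\le D_n \le \mathrm{TSG}_+(\Gamma)$, an embedding $\Gamma'$ of $P(n,k)$ with $H=\mathrm{TSG}_+(\Gamma')$. The main obstacle is the explicit $D_n$-equivariant, non-self-intersecting routing of the inner edges in the construction step, since the straight chords would intersect once $k \ge 2$ and the $D_n$-equivariance forbids any choice that is not symmetric under the stabilizer of each inner edge; once this routing is in hand, the remaining steps are essentially formal consequences of the theorems cited in the introduction.
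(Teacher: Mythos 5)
Your construction of the $D_n$-symmetric embedding is essentially the paper's (concentric circles, straight spokes, inner edges routed equivariantly off the plane), and for the non-exceptional pairs your verification of the hypothesis of Theorem~\ref{SubgroupTheorem} goes through, since there every non-trivial element of $\mathrm{Aut}(P(n,k))=B(n,k)$ either moves outer vertices to inner ones or acts on the outer cycle as a non-trivial element of $D_n$, hence fixes no two adjacent $u_i$. The gap is in the exceptional cases, which the theorem also covers. You justify the hypothesis by asserting that no non-trivial element of $\mathrm{Aut}(P(n,k))\supseteq \mathrm{TSG}_+(\Gamma)$ pointwise fixes $\{u_0,u_1\}$ and that ``an analogous case check'' handles $(4,1)$, $(8,3)$, $(10,2)$, $(10,3)$. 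That check fails: in each of these cases $\mathrm{Aut}(P(n,k))$ is strictly larger than $B(n,k)$ and contains non-trivial automorphisms pointwise fixing a pair of adjacent vertices (for $P(4,1)$ the paper states this explicitly just before Proposition~\ref{S4}; an order count, e.g.\ $|S_4\times\mathbb{Z}_2|=48$ acting transitively on the $24$ ordered adjacent pairs of the cube, shows every edge has a non-trivial pointwise stabilizer, and the same happens for the other three graphs). Worse, for $(4,1)$ your embedding is exactly the flattened cube, whose orientation-preserving topological symmetry group is all of $S_4\times\mathbb{Z}_2$ by Proposition~\ref{S4Z2}; for that embedding every edge has both endpoints fixed by a non-trivial element of $\mathrm{TSG}_+(\Gamma)$, so Theorem~\ref{SubgroupTheorem} cannot be applied to it at all.

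The missing step, and the one the paper supplies, is to decorate the embedding so that $\mathrm{TSG}_+(\Gamma)$ is forced down to exactly $D_n$ before invoking the Subgroup Theorem: tie a $4_1$ knot into every outer edge, so that the outer $n$-gon $L$ becomes the unique $n$-cycle carrying $n$ copies of $4_1$. Every element of $\mathrm{TSG}_+(\Gamma)$ must then preserve $L$, giving $\mathrm{TSG}_+(\Gamma)\leq \mathrm{TSG}_+(L)\leq D_n$ (using that no non-trivial automorphism of $P(n,k)$ fixes every $u_i$), and any element fixing two adjacent $u_i$ would act trivially on $L$ and hence on all of $\Gamma$. With this modification the hypothesis of Theorem~\ref{SubgroupTheorem} holds uniformly for all pairs $(n,k)$, exceptional or not; without it (or an equivalent device) your argument does not establish the theorem in the exceptional cases.
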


\begin{proof}  We construct an embedding of $P(n,k)$ as follows. Let $0<r<R$. We  begin with planar circles $C$ and $c$, centered at the origin, of radius $R$ and $r$, respectively.  Let $u_1$ and $v_1$ be the points of intersection between a planar ray from the origin and $C$ and $c$, respectively.  Next let $e_1$ denote the straight line segment between $u_1$ and $v_1$.  Then $e_1$ lies on the ray. Let $f$ denote a rotation of $S^3$ of $\frac{2\pi}{n}$ about an axis through the origin which is perpendicular to the plane of the projection.  Then for each $i$ let $u_i=f^i(u_1)$, $v_i=f^i(v_1)$, $e_i=f^i(e_1)$.  Thus we have embedded the spokes $\overline{u_iv_i}$ as the $e_i$.  Next we embed the outer edges $\overline{u_iu_{i+1}}$ as straight line segments.  We will embed the inner edges below.

Let $g$ denote a rotation of $\pi$ about a planar axis that contains the spoke $\overline{u_nv_n}$ if $n$ is odd and contains the two opposite spokes $\overline{u_nv_n}$ and $\overline{u_{\frac{n}{2}}v_{\frac{n}{2}}}$ if $n$ is even.  Then the isometry group $D_n=\langle f, g\rangle$ leaves the $n$-gon of outer edges and the set of spokes invariant, inducing an isomorphic group on the embedded vertices of $P(n,k)$.  Now we embed the inner edges $\overline{v_iv_{i+k}}$ so that the collection of edges $\{\overline{v_iv_{i+k}}| \medspace i
\leq n\}$ is pairwise disjoint and setwise invariant under $D_n$.  We can do this, for example, by making the crossings along each inner edge alternate, as illustrated for the embedding of $P(7,2)$ on the right of Figure \ref{P62}.   Let $\Gamma'$ denote this embedding of $P(n,k)$.    Thus $\Gamma'$ is invariant under $\langle f, g\rangle$, and hence $D_n\leq \mathrm{TSG}_+(\Gamma')$.


We obtain the embedding $\Gamma$ from $\Gamma'$ by adding the invertible knot $4_1$ to each outer edge.  Then $\Gamma$ is invariant under $\langle f, g\rangle$, and hence $D_n\leq \mathrm{TSG}_+(\Gamma)$.  Let $L$ denote the $n$-gon of outer edges of $\Gamma'$.  Then $L$ is the only $n$-gon containing $n$ copies of the knot $4_1$, and hence any element of $\mathrm{TSG}_+(\Gamma)$ must take $L$ to itself.  Since no non-trivial automorphism of $P(n,k)$ fixes every $u_i$, this implies that $D_n\leq \mathrm{TSG}_+(\Gamma)\leq \mathrm{TSG}_+(L)$.  On the other hand, because the automorphism group of an $n$-gon is $D_n$, we have $\mathrm{TSG}_+(L)\leq D_n$, and hence $D_n=\mathrm{TSG}_+(\Gamma)$.

No outer edge can be pointwise fixed by a non-trivial element of $\mathrm{TSG}_+(\Gamma)$ since that would pointwise fix $L$ and hence all of $\Gamma$.  Thus by Theorem \ref{SubgroupTheorem}, every subgroup of $D_n$ is positively realizable for $P(n,k)$.  \end{proof}

By \cite{FRU} we know that for $k^2\not\equiv\pm 1\pmod{n}$ where $(n,k)$ is non-exceptional, $\mathrm{Aut}(P(n,k))=B(n,k)=D_n$.  Thus we have the following.

\begin{corollary} Let $k^2\not\equiv\pm 1\pmod{n}$ where $(n,k)$ is non-exceptional.  Then $\Aut(P(n,k))$ and all of its subgroups are positively realizable and hence realizable for $P(n,k)$. \end{corollary}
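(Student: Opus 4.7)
The plan is to observe that this corollary is essentially a direct combination of the theorem of Frucht, Graver, and Watkins quoted in the introduction and Theorem~\ref{Thm:Gen_Dn} that was just proved. First I would invoke Frucht et al.\ to write $\Aut(P(n,k))=B(n,k)$ under the stated hypothesis that $(n,k)$ is non-exceptional, and then apply the displayed formula for $B(n,k)$ in the case $k^{2}\not\equiv\pm1\pmod{n}$ to conclude $\Aut(P(n,k))=D_{n}$.

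Next I would quote Theorem~\ref{Thm:Gen_Dn} directly: for every pair $(n,k)$, the group $D_{n}$ and all of its subgroups are positively realizable for $P(n,k)$. Combining with the identification $\Aut(P(n,k))=D_{n}$, this gives positive realizability of $\Aut(P(n,k))$ and of every subgroup of $\Aut(P(n,k))$.

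Finally, to get realizability for free, I would recall the standard observation made in the introduction: if $G=\mathrm{TSG}_{+}(\Gamma)$ for some embedding $\Gamma$ of $P(n,k)$, then inserting the same chiral invertible knot into every edge produces a new embedding $\Gamma'$ with $G=\mathrm{TSG}(\Gamma')$. Hence positive realizability implies realizability for each such subgroup, and the corollary follows.

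There is really no obstacle here; the corollary is a bookkeeping statement that packages Theorem~\ref{Thm:Gen_Dn} together with the $B(n,k)$ formula. The only thing to be careful about is to state the hypotheses cleanly (non-exceptional pair and $k^{2}\not\equiv\pm1\pmod{n}$) so that the Frucht et al.\ identification $\Aut(P(n,k))=D_{n}$ is valid before applying the previous theorem.
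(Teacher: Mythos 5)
Your proposal is correct and matches the paper's own argument exactly: the paper deduces the corollary from the Frucht--Graver--Watkins identification $\mathrm{Aut}(P(n,k))=B(n,k)=D_n$ together with Theorem~\ref{Thm:Gen_Dn}, just as you do. Nothing further is needed.
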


\section{ The case $k^2\equiv1\pmod{n}$}\label{ksquareeq1modn}

We use the following embedding $\Lambda$ of $P(n,k)$ in this and the next section.  Let $U$ and $V$ denote the cores of complementary isometric solid tori in $S^3$.  Hence $U$ and $V$ are geodesic circles, and for every point on one of these cores, its antipodal point is on the same core.  Now, for every $u\in U$ and $v\in V$, there is a unique shortest geodesic $e$ joining $u$ and $v$, and the length of $e$ is less than $\pi$.  Since $U$ and $V$ are geodesic circles, it follows that the interior of any such $e$ must be disjoint from $U\cup V$.  

Suppose that $k^2 \equiv \pm1 \pmod{n}$.  Then $n$ and $k$ are relatively prime, and hence the inner edges of $P(n,k)$ form a single loop.  The embedding $\Lambda$ is obtained as follows.   We let $e_1$ be a geodesic of minimal length between $U$ and $V$, and let $u_1$ and $v_1$ be its endpoints on $U$ and $V$ respectively.  Let $f$ be a glide rotation which rotates $U$ by $\frac{2\pi}{n}$ while rotating $V$ by $k\big(\frac{2\pi}{n}\big)$.  Then for each $i$, define  $e_{i+1}=f^i(e_1)$, and define the endpoint of $e_{i+1}$ on $U$ to be $u_{i+1}$ and the endpoint on $V$ to be $v_{i+1}$.  Thus $f(e_i)=e_{i+1}$, $f(u_i)=u_{i+1}$, and $f(v_i)=v_{i+1}$.  

Since $f$ rotates $U$ by $\frac{2\pi}{n}$, the points $u_i$ and $u_{i+1}$ are consecutive on $U$.  Also, since $f$ rotates $V$ by $k\big(\frac{2\pi}{n}\big)$, we know that $f^k$ rotates $V$ by $k^2\big(\frac{2\pi}{n}\big)\equiv\frac{2\pi}{n}\pmod{n}$.  Since $f^k(v_i)=v_{i+k}$, this implies that  $v_i$ and $v_{i+k}$ are consecutive on $V$. Thus we define the edges $\overline{u_iu_{i+1}}$ and $\overline{v_iv_{i+k}}$ to be minimal arcs on $U$ and $V$, respectively.  It follows that the interiors of the $e_i$ are disjoint from $U$ and $V$.  Furthermore, if we consider the open book decomposition of $S^3$ whose spine is $U$, then $f$ rotates the pages of this decomposition around $U$ and each $e_i$ is on a distinct page.  Hence the $e_i$ are pairwise disjoint.  This gives us an embedding $\Lambda$ of $P(n,k)$.

 Now we orient $U$ and $V$ so that $\overrightarrow{u_iu_{i+1}}$ and $\overrightarrow{v_iv_{i+k}}$ are oriented positively.

\begin{theorem}\label{k2=1}
Let  $k^2 \equiv 1 \pmod{n}$.  Then $B(n,k)=D_n \rtimes \mathbb{Z}_2$ and all of its subgroups are positively realizable and hence realizable for $P(n,k)$.
\end{theorem}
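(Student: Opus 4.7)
The plan is to realize $B(n,k) = D_n \rtimes \mathbb{Z}_2$ as a group of orientation-preserving symmetries of the embedding $\Lambda$ (or a mild modification thereof) and then invoke Theorem~\ref{SubgroupTheorem}.

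To set things up concretely, we coordinatize $S^3 \subset \mathbb{C}^2$ so that $U = \{w = 0\}$, $V = \{z = 0\}$, and the glide rotation from the construction of $\Lambda$ is $f(z,w) = (\zeta z, \zeta^k w)$ with $\zeta = e^{2\pi i/n}$; then $u_i = (\zeta^{i-1}, 0)$ and $v_i = (0, \zeta^{k(i-1)})$. The map $g(z,w) = (\bar{z}, \bar{w})$ is an orientation-preserving involution of $S^3$ whose fixed set is a great circle meeting both $U$ and $V$; it acts as a reflection on each of $U$ and $V$ fixing $u_1$ and $v_1$, and one checks it preserves outer edges, inner edges, and spokes of $\Lambda$. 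Thus $\langle f, g\rangle \cong D_n$ is induced by orientation-preserving isometries of $S^3$ preserving $\Lambda$.

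To extend to the full $B(n,k)$, we exhibit an orientation-preserving isometric swap of the Hopf-linked circles $U$ and $V$. We set
\[
h(z,w) = (\zeta^{k-1}\, w,\ \zeta^{1-k}\, z),
\]
which is orientation preserving (the coordinate swap $(z,w)\mapsto(w,z)$ is a product of two commuting reflections on $\mathbb{R}^4$) and squares to the identity. A short computation using $k^2 \equiv 1 \pmod n$ shows $h(u_i) = v_{ki}$ and $h(v_i) = u_{ki}$. Being an isometry exchanging $U$ and $V$, $h$ carries minimal arcs to minimal arcs, and the congruence makes the indices match: the outer edge $\overline{u_iu_{i+1}}$ is sent to the inner edge $\overline{v_{ki}v_{ki+k}}$, the inner edge $\overline{v_iv_{i+k}}$ is sent to $\overline{u_{ki}u_{ki+k^2}} = \overline{u_{ki}u_{ki+1}}$, and the spoke $e_i$ to the spoke $e_{ki}$. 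Hence $h$ preserves $\Lambda$. Computing $hfh^{-1} = f^k$ (where $k^2 \equiv 1$ is used crucially) shows $\langle f, g, h\rangle$ is a group of order $4n$ in which $D_n$ is normal with index $2$, so comparing orders identifies it as $D_n \rtimes \mathbb{Z}_2 = B(n,k)$. Therefore $B(n,k) \leq \mathrm{TSG}_+(\Lambda)$.

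For the matching upper bound and the passage to subgroups, we separate two cases. When $(n,k)$ is non-exceptional, $\mathrm{Aut}(P(n,k)) = B(n,k)$ by the Frucht-Graver-Watkins theorem, so $\mathrm{TSG}_+(\Lambda) = B(n,k)$ automatically. For the exceptional pairs $(4,1)$ and $(8,3)$, where $\mathrm{Aut}(P(n,k))$ is strictly larger, we form $\Gamma$ from $\Lambda$ by tying a chiral invertible knot into each spoke; this knotting is respected by $f$, $g$, and $h$ (which all permute spokes) but distinguishes spokes from outer and inner edges, forcing every element of $\mathrm{TSG}_+(\Gamma)$ to preserve the set of spokes and hence lie in $B(n,k)$. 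In either case, no non-trivial element of $\mathrm{TSG}_+(\Gamma) \leq B(n,k)$ fixes both endpoints of the outer edge $\overline{u_1u_2}$ (any element of $D_n$ fixing two adjacent outer vertices is trivial, and elements of the coset $h \cdot D_n$ send outer vertices to inner vertices), so Theorem~\ref{SubgroupTheorem} yields every subgroup of $B(n,k)$ as positively realizable. The most delicate point is showing that the coordinate swap $h$ really induces a graph automorphism on $\Lambda$, which requires careful index bookkeeping and is precisely where the congruence $k^2 \equiv 1 \pmod n$ is indispensable.
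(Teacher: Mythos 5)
Your lower-bound argument is essentially the paper's, made explicit in coordinates: the embedding is the paper's $\Lambda$, and your $f$, $g$, $h$ are exactly the glide rotation, the $\pi$-rotation about the great circle through $e_1$, and the order-$2$ swap of $U$ and $V$ used there; the index computations and the identity $hfh^{-1}=f^k$ check out. Your upper bound in the non-exceptional case (quoting $\mathrm{Aut}(P(n,k))=B(n,k)$ from Frucht--Graver--Watkins) is a legitimate shortcut that the paper does not take.

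The gap is in your treatment of the exceptional pairs. First, a bookkeeping slip: the exceptional pairs with $k^2\equiv 1\pmod n$ are $(4,1)$, $(8,3)$, $(12,5)$, and $(24,5)$, not just the first two, and the theorem as stated (and as used later for $(8,3)$ and $(4,1)$) must cover all of them. More seriously, your construction for these cases --- tying a chiral invertible knot into \emph{each spoke} and asserting that $f$, $g$, $h$ still preserve the embedding ``since they permute spokes'' --- fails. Permuting the spokes setwise is not enough; you must check the stabilizers. The isometry $g$ is a $\pi$-rotation about a great circle that \emph{contains} the spoke $e_1$ (and $e_{1+n/2}$ when $n$ is even), so it fixes $e_1$ pointwise. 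Any arc $e_1'$ with the same endpoints that is invariant under $g$ restricts $g$ to an orientation-preserving involution of $[0,1]$ fixing both endpoints, which is the identity; hence $e_1'\subseteq\mathrm{fix}(g)=S^1$ and is unknotted. So no knotted replacement of $e_1$ can be made $g$-equivariantly, and your knotted embedding $\Gamma$ does not admit $g$ (nor is it clear that any homeomorphism induces the corresponding automorphism). The paper avoids exactly this trap by knotting the edges of $U$ and $V$ instead of the spokes: no edge of $U\cup V$ is pointwise fixed by a non-trivial element of $\langle f,g,h\rangle$ (at worst an edge is inverted, which the invertible $4_1$ accommodates), the spokes remain the unique unknotted edges, so every element of $\mathrm{TSG}_+$ preserves the set of spokes and lies in $B(n,k)$ --- and this works uniformly for all $(n,k)$, exceptional or not. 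Redirecting your knots from the spokes to the inner and outer edges repairs the argument; your final application of Theorem~\ref{SubgroupTheorem} via the edge $\overline{u_1u_2}$ is then fine.
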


\begin{proof}  We start with the embedding $\Lambda$ of $P(n,k)$ described above and we add a $4_1$ knot to each edge of $U$ and $V$ to get an embedding $\Lambda'$ of $P(n,k)$.  Recall that $f$ is a glide rotation which rotates $U$ by $\frac{2\pi}{n}$ while rotating $V$ by $k\big(\frac{2\pi}{n}\big)$ both in the positive direction.  Let $g$ be a rotation of $S^3$ by $\pi$ around an axis that contains $e_1=\overline{u_1v_1}$.  Then $g$ takes $U$ and $V$ to themselves, reversing their orientations.  Finally, let $h$ be an order $2$ rotation of $S^3$ interchanging the positively oriented $U$ with the positively oriented $V$, taking $u_n$ to $v_n$. 

In order to show that  $f$, $g$, and $h$ take $\Lambda'$ to itself, we need to know that they take edges to edges.  First, by definition we know that $f(e_i)=e_{i+1}$.   Also, for every $i$, we have $g(v_{1-ki})=v_{1+ki}$, and hence $g(v_{1-i})=g( v_{1-k^2i})=v_{1+k^2i}=v_{1+i}$.  Since we also have $g(u_{1-i})=u_{1+i}$ and we know that the $e_i$ are minimal length geodesics, it follows that $g(e_{1-i})=e_{i+1}$.  Finally, since $h$ interchanges $u_n$ and $v_n$ preserving orientation, we have $h(u_{i})=v_{ki}$, and hence $h(u_{ki})=v_{k^2i}=v_{i}$.  Thus for every $i$, we have $h(e_{i})=e_{ki}$.  It follows that $f$, $g$, and $h$ take $\Lambda'$ to itself.

Now $f$ induces a rotation of order $n$ on $U$ and $V$, $g$ turns $U$ and $V$ over, and $h$ is an involution interchanging $U$ and $V$.  Thus $\langle f, g, h \rangle$ induces $D_n \rtimes \mathbb{Z}_2$ on $\Lambda'$, and hence $D_n \rtimes \mathbb{Z}_2\leq \mathrm{TSG}_+(\Lambda')$.  However, since every element of $\mathrm{TSG}_+(\Lambda)$ takes the set of edges $e_i$ to themselves, it follows that $\mathrm{TSG}_+(\Lambda')\leq B(n,k)$.  Now by \cite{FRU}, $B(n,k)=D_n \rtimes \mathbb{Z}_2$ and hence $\mathrm{TSG}_+(\Lambda')=D_n \rtimes \mathbb{Z}_2$.

Finally, observe that no edge of $U$ or $V$ is pointwise fixed by any non-trivial element of $\mathrm{TSG}_+(\Lambda')$.  Thus $\Lambda'$ satisfies the hypothesis of Theorem \ref{SubgroupTheorem}, and hence all subgroups of $ D_n \rtimes \mathbb{Z}_2$ are positively realizable for $P(n,k)$.  \end{proof}

If $(n,k)$ is non-exceptional then $\mathrm{Aut}(P(n,k))=B(n,k)$, implying the following.

\begin{corollary} Let  $k^2 \equiv 1 \pmod{n}$ where $(n,k)$ is non-exceptional.  Then $\Aut(P(n,k))$ and all of its subgroups are positively realizable and hence realizable for $P(n,k)$. \end{corollary}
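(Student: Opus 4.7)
The plan is to observe that this corollary is an immediate consequence of combining Theorem \ref{k2=1} with the classification of $\mathrm{Aut}(P(n,k))$ due to Frucht, Graver, and Watkins stated in the introduction. Specifically, Theorem \ref{k2=1} already establishes that whenever $k^2 \equiv 1 \pmod{n}$, the group $B(n,k) = D_n \rtimes \mathbb{Z}_2$ and every one of its subgroups is positively realizable for $P(n,k)$. So the only remaining step is to identify $\mathrm{Aut}(P(n,k))$ with $B(n,k)$ under the non-exceptional hypothesis.

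First, I would invoke the Frucht–Graver–Watkins theorem quoted in the introduction: for every pair $(n,k)$ outside the seven exceptional pairs $(4,1)$, $(5,2)$, $(8,3)$, $(10,2)$, $(10,3)$, $(12,5)$, $(24,5)$, we have $\mathrm{Aut}(P(n,k)) = B(n,k)$. Since the hypothesis assumes $(n,k)$ is non-exceptional, this equality applies. Combined with the formula for $B(n,k)$ in the case $k^2 \equiv 1 \pmod n$, this gives $\mathrm{Aut}(P(n,k)) = D_n \rtimes \mathbb{Z}_2$.

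Next, I would apply Theorem \ref{k2=1} directly: the theorem yields an embedding $\Lambda'$ with $\mathrm{TSG}_+(\Lambda') = D_n \rtimes \mathbb{Z}_2 = \mathrm{Aut}(P(n,k))$, proving positive realizability of the full automorphism group. The theorem also notes that $\Lambda'$ satisfies the hypothesis of the Subgroup Theorem (Theorem \ref{SubgroupTheorem}), since no edge of $U$ or $V$ is pointwise fixed by any non-trivial element of $\mathrm{TSG}_+(\Lambda')$; hence every subgroup $H \leq \mathrm{Aut}(P(n,k))$ is also positively realizable. Finally, the observation at the end of the introduction — that positive realizability implies realizability (by adding the same chiral invertible knot to every edge) — gives realizability of all subgroups as well.

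There is no real obstacle here: the corollary is essentially a bookkeeping combination of the previous theorem with the Frucht–Graver–Watkins classification. The only point worth being careful about is ensuring that the non-exceptional hypothesis is exactly what is needed to apply the Frucht–Graver–Watkins identification $\mathrm{Aut}(P(n,k)) = B(n,k)$, which indeed it is.
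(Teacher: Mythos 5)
Your proposal is correct and matches the paper exactly: the paper derives this corollary in one line from the Frucht--Graver--Watkins identification $\mathrm{Aut}(P(n,k)) = B(n,k)$ for non-exceptional pairs, combined with Theorem~\ref{k2=1}. Your additional remarks on the Subgroup Theorem hypothesis and on passing from positive realizability to realizability are already contained in the proof of Theorem~\ref{k2=1} and the introduction, respectively, so nothing is missing.
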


\section{The case $k^2 \equiv -1\pmod{n}$ }\label{ksquareequal-1modn}\label{k^2=-1}

Since $k^2 \equiv -1\pmod{n}$, we know that $k$ and $n$ are relatively prime, and hence the inner edges of $P(n,k)$ form a single cycle.  Furthermore, if $n$ were divisible by $4$, then $k$ would be odd, and hence $k^2=(2m+1)^2=4m^2+4m+1$ for some $m$.   But this implies that $4m^2+4m+1\equiv -1\pmod{n}$, and thus $4m^2+4m+2=nr$ for some $r$, which is impossible since $n$ is divisible by $4$.  Therefore, $n$ cannot be divisible by $4$.

According to \cite{FRU},  $B(n,k)=\mathbb{Z}_n \rtimes \mathbb{Z}_4= \langle \rho, \alpha \ | \ \rho^n = \alpha^4 = \mathrm{id}, \alpha \rho \alpha^{-1} = \rho^k \rangle$, where $\alpha(u_i) = v_{ki}$, $\alpha(v_i) = u_{ki}$, $\rho(u_i) = u_{i+1}$, and $\rho(v_i) = v_{i+1}$.   Observe that every element of $B(n,k)$ can be expressed as $ \rho^m\alpha^r$ for some $0 \leq m <n$ and $0 \leq r < 4$.

\begin{lemma}  \label{NoOrder4}Let $k^2 \equiv -1\pmod{n}$.  Then an element of $B(n,k)$ has order $4$ if and only if it can be expressed as $\rho^m\alpha^{\pm1}$.  Furthermore,  if $n$ is odd, then no order $4$ element of $B(n,k)$ is positively realizable. \end{lemma}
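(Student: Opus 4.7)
The plan is to split the lemma into two independent pieces. For the first, I would compute orders inside the semidirect product $B(n,k)=\langle\rho\rangle\rtimes\langle\alpha\rangle$: each element is uniquely written $\rho^m\alpha^r$ with $r\in\{0,1,2,3\}$, and I need only rule out order $4$ for $r=0,2$ and confirm it for $r=\pm 1$. The defining relation $\alpha\rho\alpha^{-1}=\rho^k$ together with $k^2\equiv -1\pmod n$ yields $\alpha^2\rho\alpha^{-2}=\rho^{-1}$. Since $4\nmid n$ (established at the start of this section), $\langle\rho\rangle$ contains no element of order $4$, handling $r=0$. For $r=2$, $(\rho^m\alpha^2)^2=\rho^m\rho^{-m}\alpha^4=e$, so the order is at most $2$. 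For $r=\pm 1$, a direct calculation gives $(\rho^m\alpha^{\pm 1})^2=\rho^{m(1\pm k)}\alpha^2$, which has order $2$ by the previous case; hence $\rho^m\alpha^{\pm 1}$ has order exactly $4$.

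For the second assertion, suppose for contradiction that some order $4$ element $\sigma=\rho^m\alpha^{\pm 1}$ is induced by an orientation-preserving homeomorphism of some embedding $\Gamma$ of $P(n,k)$. By the Automorphism Rigidity Theorem I may assume this homeomorphism $h$ has finite order, so Smith Theory applies to both $h$ and $h^2$. The strategy is to combine what Smith Theory forces on $\mathrm{fix}(h^2)$ with the combinatorics of the $\sigma^2$-fixed vertices to derive a contradiction.

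The key computation is that $\sigma^2=\rho^{m(1\pm k)}\alpha^2$ acts on vertices by $u_i\mapsto u_{-i+m(1\pm k)}$ and $v_i\mapsto v_{-i+m(1\pm k)}$, so its fixed vertices correspond to solutions of $2i\equiv m(1\pm k)\pmod n$. Because $n$ is odd, $2$ is invertible modulo $n$ and there is a unique fixed index $i_0$; this produces exactly one fixed $u$-vertex $u_{i_0}$ and one fixed $v$-vertex $v_{i_0}$, adjacent via the spoke $\overline{u_{i_0}v_{i_0}}$. Since $h^2$ is a non-trivial (as $\sigma^2\neq e$) orientation-preserving finite-order homeomorphism whose fixed set contains these two embedded vertices, Smith Theory forces $\mathrm{fix}(h^2)\cong S^1$, and the only graph vertices on this circle are $u_{i_0}$ and $v_{i_0}$.

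To finish, I analyze $h$ restricted to $\mathrm{fix}(h^2)$. Since $(h|_{\mathrm{fix}(h^2)})^2=\mathrm{id}$, the restriction is the identity or an involution of $S^1$. It cannot be the identity, because $\sigma$ swaps $u$-vertices with $v$-vertices and so does not fix $u_{i_0}$. If $h$ had any fixed point on $\mathrm{fix}(h^2)$, then Smith Theory applied to $h$ would give $\mathrm{fix}(h)\cong S^1\subseteq \mathrm{fix}(h^2)$, forcing equality and contradicting the fact that $h$ is non-trivial there. Hence $h$ must swap $u_{i_0}$ and $v_{i_0}$ and act freely on $\mathrm{fix}(h^2)$. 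But then $h$ preserves the spoke $\overline{u_{i_0}v_{i_0}}$ setwise while reversing its endpoints, so by the intermediate value theorem $h$ fixes a point on the spoke, contradicting freeness on $\mathrm{fix}(h^2)$. The main obstacle is this final step; the odd-$n$ hypothesis is used precisely to force a single pair of $\sigma^2$-fixed vertices, so that the spoke between them can produce the unwanted fixed point of $h$ on $\mathrm{fix}(h^2)$.
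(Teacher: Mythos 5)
Your proposal is correct and follows essentially the same route as the paper: the order computation reduces to showing that $\rho^m\alpha^2$ is an involution while $4\nmid n$ rules out order $4$ in $\langle\rho,\alpha^2\rangle$, and the non-realizability argument uses the Rigidity Theorem plus Smith Theory to force $h$ to be fixed-point free and then derives a contradiction from an invariant spoke whose endpoints are swapped. The only cosmetic difference is that you locate that spoke by explicitly solving $2i\equiv m(1\pm k)\pmod n$ using that $2$ is invertible mod odd $n$, where the paper simply observes that a permutation of order dividing $4$ on an odd number of spokes must fix one.
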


\begin{proof} Since $\alpha$ interchanges the inner and outer cycles of $P(n,k)$ while $\rho$ induces an order $n$ rotation of both cycles, for any $m<n$, $\rho^m$ rotates both cycles while $\rho^m\alpha^{\pm1}$ interchanges them. Furthermore, $\alpha^2(u_i)=\alpha(v_{ki})=u_{k^2i}=u_{-i}$ and $\alpha^2(v_i)=v_{-i}$.  Thus $\alpha^2$ is an involution which turns over both the inner and outer cycles, and hence so is any $\rho^m\alpha^2$.  It follows that every element of the form $\rho^m\alpha^{\pm1}$ has order $4$.  Also, since $n$ is not divisible by $4$, $D_n=\langle \rho,\alpha^2\rangle$ has no elements of order $4$.  Thus an element of $B(n,k)$ has order $4$ if and only if it can be expressed as $\rho^m\alpha^{\pm1}$.

Suppose that an order $4$ element $\beta$ of $B(n,k)$ is induced by an orientation preserving homeomorphism of some embedding of $P(n,k)$ in $S^3$. By the above paragraph, $\beta$ has the form $\rho^m\alpha^{\pm1}$.  Now by Theorem~\ref{Rigid}, there is an embedding $\Gamma'$ of $P(n,k)$ and a finite order orientation preserving homeomorphism $h$ of $(S^3,\Gamma')$ which also induces $\beta$. Now $h^2$ turns over the inner and outer cycles of $\Gamma$, and hence fixes two points on each of these cycles.  Thus by Smith Theory, $\mathrm{fix}(h^2)=S^1$.  Since $h$ is orientation preserving, if $h$ is not fixed point free, then we also have $\mathrm{fix}(h)=S^1$.  Since $\mathrm{fix}(h)\subseteq\mathrm{fix}(h^2)$, we have $\mathrm{fix}(h)=\mathrm{fix}(h^2)$.  This implies that $h$ fixes two points on each of the cycles, which is impossible since $\rho^m\alpha^{\pm1}$ interchanges the two cycles.  Thus $h$ must be fixed point free. 

Suppose that $n$ is odd. Since $\beta\in B(n,k)$, $h$ must leave at least one spoke $e_i$ setwise invariant.  But this implies that $h$ fixes the midpoint of $e_i$, which contradicts the above paragraph.  Thus no order $4$ element of $B(n,k)$ is positively realizable. \end{proof}

Using the above lemma we see as follows that $D_4$ is not a subgroup of $B(n,k)$.  In particular, if $D_4$ were a subgroup, then by the lemma it would be generated by an element $\rho^m\alpha$ of order $4$ and an element $\rho^r\alpha^2$ of order $2$.  But $\rho^m\alpha\rho^r\alpha^2=\rho^{m+nk}\alpha^3$ has order $4$, whereas in $D_4$ the product of a pair of generators has order $2$.

\begin{theorem}\label{order4}  Let $k^2 \equiv -1\pmod{n}$ and $H\leq B(n,k)=\mathbb{Z}_n \rtimes \mathbb{Z}_4$.  If $n$ is odd, then $H$ is positively realizable for $P(n,k)$ if and only if $H\leq D_n$.  If $n$ is even, then $H$ is positively realizable for $P(n,k)$ if and only if either $H\leq D_n$ or $H=\mathbb{Z}_4$.
\end{theorem}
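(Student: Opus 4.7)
The plan is to decompose the statement into four sub-claims, of which only the last requires significant new work.

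\emph{Easy reductions.} For every $H\leq D_n$, positive realizability is exactly Theorem~\ref{Thm:Gen_Dn}. For $n$ odd, the converse is immediate from Lemma~\ref{NoOrder4}: $B(n,k)\setminus D_n$ consists entirely of the order-$4$ elements $\rho^m\alpha^{\pm 1}$, and the lemma rules these out for $n$ odd, forcing $H\leq D_n$. It remains to prove, for $n$ even, that (a)~$\mathbb{Z}_4$ is positively realizable, and (b)~any positively realizable $H$ containing an element of order $4$ satisfies $|H|=4$.

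\emph{Construction for (a).} Since $(k-1)(k+1)=k^2-1\equiv -2\pmod{n}$ and $k$ is odd (as $\gcd(k,n)=1$ and $n$ is even), one checks $\gcd(k-1,n)=2$. Consequently, $\beta=\rho^m\alpha$ preserves a spoke $\overline{u_iv_i}$ setwise iff $(k-1)i\equiv -m\pmod{n}$, which is solvable iff $m$ is even. Choose $m$ odd, so $\beta$ preserves no spoke. We can realize $\langle\beta\rangle\cong\mathbb{Z}_4$ by a fixed-point-free order-$4$ isometry $h\in\mathrm{SO}(4)$ with $\mathrm{fix}(h^2)=:A\cong S^1$ (for instance, the quaternionic isometry $x\mapsto\zeta x(-\zeta)^{-1}$ of $S^3\subset\mathbb{H}$ with $\zeta=e^{i\pi/4}$ has these properties). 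Place the four $\beta^2$-fixed vertices on $A$ (with $h|_A$ acting antipodally, as the only nontrivial fixed-point-free involution of $S^1$) and place all remaining vertices in size-$4$ $\langle h\rangle$-orbits off of $A$. The two spokes whose endpoints lie on $A$ are drawn as short arcs of $A$ between adjacent on-$A$ vertices, and Lemma~\ref{edgeembeddinglemma} then supplies an equivariant embedding of all remaining edges. Finally, decorate the edges with distinct invertible chiral knots on distinct $\langle h\rangle$-orbits (as in the proof of Theorem~\ref{Thm:Gen_Dn}) to rule out any symmetry outside $\langle h\rangle$ and conclude $\mathrm{TSG}_+(\Gamma)=\mathbb{Z}_4$.

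\emph{Setup for (b).} Suppose $H$ is positively realizable, contains an order-$4$ element, and $|H|>4$. By Theorem~\ref{GroupRigid}, realize $H$ by a subgroup $G\leq\mathrm{SO}(4)$. The fixed-midpoint argument from (a) applies to every order-$4$ element $\rho^{m'}\alpha^{\pm 1}\in H$: its realization is fixed-point-free (by Lemma~\ref{NoOrder4}'s proof), so if it preserved a spoke it would fix the midpoint, a contradiction. Hence every such $m'$ is odd. Writing $H\cap\langle\rho\rangle=\langle\rho^j\rangle$ and $d=n/j$, if $j$ were odd then $\rho^j\beta$ would be an order-$4$ element with even $\rho$-exponent, a contradiction. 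So $j$ is even. Since $k^2\equiv -1\pmod n$ forces $n\not\equiv 0\pmod 4$, we have $n=2M$ with $M$ odd, and $j$ even then forces $d$ to be an odd divisor of $M$. So $H\cong\mathbb{Z}_d\rtimes_k\mathbb{Z}_4$ with $d\geq 3$ (the case $d=2$ forces $j=M$, which is odd).

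\emph{Finishing (b) via the Involution Theorem.} Each of the $d$ involutions $\rho^{lj}\beta^2\in H$ ($l=0,\dots,d-1$) fixes a different quadruple of vertices (the $l$-th is a shift of the $\beta^2$-fixed quadruple by $lj/2$). By Smith Theory, each such involution's realization has $\mathrm{fix}\cong S^1$, and distinct fixed-vertex quadruples force distinct axes. Theorem~\ref{involution} then applies and yields $G\leq D_q\times D_q$ for some odd $q$, or $G$ is a finite subgroup of $\mathrm{SO}(3)$. The first case is excluded immediately, since $D_q\times D_q$ with $q$ odd has no element of order $4$. For the second, the finite subgroups of $\mathrm{SO}(3)$ containing an order-$4$ element are exactly $\mathbb{Z}_{4\ell}$, $D_{4\ell}$, and $S_4$: the first is abelian while $H$ is non-abelian (as $\gcd(k-1,d)=1$ makes the $\mathbb{Z}_4$-action on $\mathbb{Z}_d$ faithful); $D_{4\ell}$ has a center of order $2$ while a direct computation gives $Z(H)=\{e\}$; and $|S_4|=24$ would require $d=6$, which is even. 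This contradiction forces $|H|=4$ and completes the proof. The main obstacle is this last step, which rests on both the Involution Theorem and a careful center/abelianness check for $H$.
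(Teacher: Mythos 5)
Your proposal is correct and follows essentially the same route as the paper: Theorem~\ref{Thm:Gen_Dn} and Lemma~\ref{NoOrder4} for the easy directions, an equivariant construction of $\mathbb{Z}_4$ via a fixed-point-free order-$4$ isometry whose square fixes a circle carrying the four $\beta^2$-fixed vertices followed by the Edge Embedding Lemma (your quaternionic isometry is conjugate to the paper's glide rotation of a solid torus), and Group Rigidity plus the Involution Theorem to exclude any $H$ properly containing $\mathbb{Z}_4$. Your write-up supplies two details the paper leaves implicit — the parity condition $\gcd(k-1,n)=2$ showing that only the class $\langle\rho^m\alpha\rangle$ with $m$ odd avoids preserving a spoke, and the explicit elimination of $D_q\times D_q$ and of each finite subgroup of $\mathrm{SO}(3)$ containing an order-$4$ element via order, center, and abelianness — but these are refinements of, not departures from, the paper's argument.
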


\begin{proof} By Theorem~\ref{Thm:Gen_Dn}, $D_n$ and all of its subgroups are positively realizable for $P(n,k)$.  If $H$ is not contained in $D_n$, then $H$ must contain an element of order $4$.  When $n$ is odd, no order $4$ element of $B(n,k)$ is positively realizable by Lemma~\ref{NoOrder4}, and hence $H$ is not positively realizable.

Thus we assume that $n$ is even.  Since $n$ is not divisible by $4$, we have $n\equiv{2}\pmod{4}$.  In order to construct an embedding of $P(n,k)$ which positively realizes $\mathbb{Z}_4\leq B(n,k)$, we consider the glide rotation $h$ which rotates a standardly embedded solid torus meridionally by $\frac{\pi}{2}$ while rotating it longitudinally by $\pi$.

Let $U$ be a meridian of the solid torus with $n-2$ evenly spaced vertices labeled $u_1,\dots, u_{\frac{n}{2}-1}, u_{\frac{n}{2}+1},\dots, u_{n-1}$. Let $V=h(U)$ with vertices $v_k, v_{2k},\dots, v_{\frac{n}{2}-k}, v_{\frac{n}{2}}+k,\dots,v_{n-k}$ such that each $v_j$ is $h(u_i)$ for some $i$.   Let $D_U$ and $D_V$ denote disjoint meridional disks bounded by $U$ and $V$ respectively, such that $h(D_U)=D_V$.  Let $C=\mathrm{fix}(h^2)$, and let $x$ denote the midpoint of an arc $A$ of $C-(D_U\cup D_V)$. Let $u_n$ be a vertex on the arc of $A-\{x\}$ with one endpoint on $D_U$, and let $v_n$ be a vertex on the arc of $A-\{x\}$ with one endpoint on $D_V$.  Finally, let $v_{\frac{n}{2}}=h(u_n)$ and $u_{\frac{n}{2}}=h(v_n)$.  This gives us an embedded set of vertices $W$.  

For example, Figure~\ref{Fig:Z4} illustrates the embedded vertices of $P(10,3)$ except for $v_{5}$ and $u_{5}$, which are in the front of the solid torus.  Note that the core is illustrated in green, and $h$ takes the pair of blue arcs on $U$ to the pair of blue arcs on $V$.

  \begin{figure}[h]
\begin{center}
\centering\includegraphics[height=0.1\textheight]{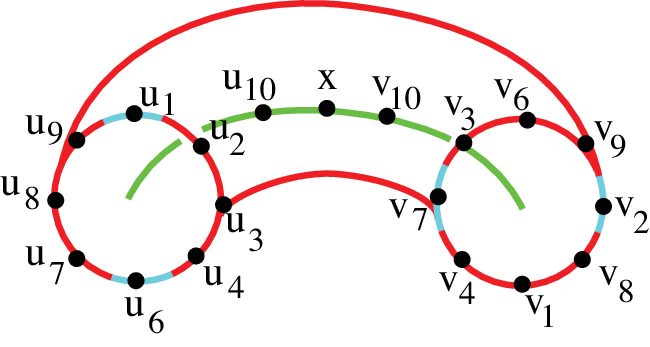}
\caption{The embedded vertices of $P(10,3)$, except for $u_5$ and $v_5$ which are on the (green) core in the front of the solid torus.}
\label{Fig:Z4}
\end{center}
\end{figure}

Let $G=\langle h\rangle=\mathbb{Z}_4$.  Then $G$ induces a faithful action $H\leq B(n,k)$ of $W$ such that no pair of adjacent vertices are fixed by a non-trivial element of $H$.  The pairs of adjacent vertices  $u_n$ and $v_n$ and $u_{\frac{n}{2}}$ and $v_{\frac{n}{2}}$ are the only ones which are fixed by $h^2$.  But they are not fixed by any other non-trivial element of $G$.  Also these pairs each bound an arc in $C$ which is disjoint from the other vertices.  Hence we can apply Lemma~\ref{edgeembeddinglemma} to embed the edges of $P(n,k)$ such that the resulting embedded graph is setwise invariant under $G$.

Next assume that a positively realizable group $H\leq B(n,k)$ contains $\mathbb{Z}_4$ as a proper subgroup.   Then by Theorem~\ref{GroupRigid}, for some embedding $\Gamma$ of $P(n,k)$ in $S^3$, the group $H$ is induced on $\Gamma$ by an isomorphic group $H'\leq \mathrm{SO}(4)$.  Every involution in $H$ has the form $\rho^m\alpha^2$ and hence turns over both the inner and the outer cycle of $\Gamma$.  Thus every involution in $H'$ fixes two points on each of these cycles.  Furthermore, every non-trivial orientation preserving isometry of $(S^3,\Gamma)$ which fixes two points on each cycle must be an involution and if two such isometries fix the same points on these cycles then the isometries are equal.  

Thus we can apply Theorem~\ref{involution} to conclude that $H'$ is either a subgroup of $D_m\times D_m$ for some odd $m$ or a finite subgroup of $\mathrm{SO}(3)$.
However, since $H\leq B(n,k)$ and contains $\mathbb{Z}_4$ as a proper subgroup, $H$ has the form $\mathbb{Z}_r\rtimes \mathbb{Z}_4$.  But this is impossible since $H'\cong H$.  It follows that no subgroup of $B(n,k)$ containing $\mathbb{Z}_4$ as a  proper subgroup is positively realizable for $P(n,k)$.  \end{proof}

\begin{theorem} \label{TSG(k2=-1)}  Let $k^2 \equiv-1\pmod{n}$. Then $B(n,k)=\mathbb{Z}_n \rtimes \mathbb{Z}_4$ and all of its subgroups are realizable for $P(n,k)$.
\end{theorem}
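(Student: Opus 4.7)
The subgroups of $B(n,k) = \mathbb{Z}_n \rtimes \mathbb{Z}_4$ fall into three families parametrized by divisors $d \mid n$: the cyclic groups $\mathbb{Z}_d = \langle \rho^{n/d}\rangle$, the dihedral groups $D_d = \langle \rho^{n/d}, \alpha^2\rangle$, and the semidirect products $\mathbb{Z}_d \rtimes \mathbb{Z}_4 = \langle \rho^{n/d}, \alpha\rangle$. The first two families lie inside $D_n$, so by Theorem~\ref{Thm:Gen_Dn} each is positively realizable; adding a chiral invertible knot to every edge of the realizing embedding promotes positive realizability to realizability. It remains to realize the groups $\mathbb{Z}_d \rtimes \mathbb{Z}_4$, which by Theorem~\ref{order4} are (mostly) not positively realizable, so the homeomorphism inducing $\alpha$ must be orientation reversing.

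To produce such a symmetry on the embedding $\Lambda$ from Section~\ref{ksquareeq1modn}, we realize $S^3$ as the unit sphere in $\mathbb{C}^2$, take the Hopf link cores $U = \{(z,0) : |z|=1\}$ and $V = \{(0,z) : |z|=1\}$, and place the vertices as $u_i = (e^{i(i-1)2\pi/n},\, 0)$ and $v_i = (0,\, e^{i(i-1)k\cdot 2\pi/n})$, so that the glide rotation $f$ is the restriction to $S^3$ of a diagonal rotation of $\mathbb{C}^2$. Define
\[
\tilde\psi(z_1, z_2) \;=\; \bigl(e^{\,i(k-1)2\pi/n}\,z_2,\ e^{-i(k+1)2\pi/n}\,\overline{z_1}\bigr).
\]
A direct matrix computation shows that $\tilde\psi$ is an isometry of $S^3$ of determinant $-1$, with $\tilde\psi^2 \neq \mathrm{id}$ and $\tilde\psi^4 = \mathrm{id}$, so $\tilde\psi$ is orientation reversing of order $4$. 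Using $k^2 \equiv -1 \pmod n$, one checks that $\tilde\psi(u_i) = v_{ki}$ and $\tilde\psi(v_i) = u_{ki}$, so $\tilde\psi$ induces the automorphism $\alpha$ on $P(n,k)$. Setting $\sigma = f^{n/d}$, the subgroup $\langle \sigma, \tilde\psi\rangle \cong \mathbb{Z}_d \rtimes \mathbb{Z}_4$ of isometries of $S^3$ leaves $\Lambda$ setwise invariant.

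To force $\mathrm{TSG}(\Gamma_d) = \mathbb{Z}_d \rtimes \mathbb{Z}_4$ exactly, we decorate each edge of $\Lambda$ with an amphichiral invertible knot (for instance, knots of type $4_1, 6_3, 8_3, \ldots$), using a single knot type per $\langle \sigma, \tilde\psi\rangle$-orbit and distinct types on distinct orbits (in particular distinguishing spoke orbits from cycle-edge orbits). Since amphichiral invertible knots are preserved by both orientation-preserving and orientation-reversing homeomorphisms, $\Gamma_d$ is still invariant under $\langle \sigma, \tilde\psi\rangle$, and hence $\mathbb{Z}_d \rtimes \mathbb{Z}_4 \leq \mathrm{TSG}(\Gamma_d)$. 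Conversely, any element of $\mathrm{TSG}(\Gamma_d)$ must preserve the knot-type labeling, hence preserve both the spoke/cycle-edge distinction (forcing it to lie in $B(n,k)$) and each $\langle \sigma, \tilde\psi\rangle$-orbit setwise; a direct calculation using the structure of $B(n,k)$ together with $k^2 \equiv -1 \pmod n$ shows that the setwise stabilizer in $B(n,k)$ of this orbit decomposition is precisely $\langle \sigma, \tilde\psi\rangle$.

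The main obstacle is constructing the orientation-reversing order-$4$ symmetry $\tilde\psi$: the specific phase choices in its definition are forced by requiring $\tilde\psi(u_i) = v_{ki}$ to hold consistently for all $i$, and the resulting overdetermined system is compatible exactly when $k^2 \equiv -1 \pmod n$. The subsidiary step of checking that the orbit decoration cuts $\mathrm{TSG}$ all the way down to $\langle \sigma, \tilde\psi\rangle$, rather than to a strictly larger subgroup of $B(n,k)$, is a combinatorial verification that relies on the same congruence.
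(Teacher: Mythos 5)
Your proposal is correct and follows essentially the same route as the paper: it uses the same embedding $\Lambda$, realizes $\alpha$ by an orientation-reversing order-$4$ isometry interchanging $U$ and $V$ (your explicit map $\tilde\psi$ is just a coordinate version of the paper's composition $Rh$ of a reflection with a rotation), and cuts the symmetry group down to $\mathbb{Z}_d \rtimes \mathbb{Z}_4$ by placing invertible amphichiral knots equivariantly on edge orbits. The only differences are cosmetic: the paper decorates just the spokes with $4_1$ and one outer-edge orbit with $6_3$ rather than every orbit, and your deferred combinatorial check is easy since the stabilizer in $B(n,k)$ of the spoke orbit of $e_n$ alone is already $\langle \rho^{n/d},\alpha\rangle$.
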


\begin{proof}  Let $\Lambda$ denote the embedding of $P(n,k)$ from the beginning of Section~\ref{ksquareeq1modn}.  
Let $h$ be an order $2$ rotation of $S^3$ interchanging the positively oriented $U$ with the positively oriented $V$ such that it interchanges $u_i$ and $v_{ki}$.  Now $h(u_{ki})=v_{k^2i}=v_{-i}$, and hence $h(v_i)=u_{-ki}$.  Also, let $R$ be a reflection through a sphere containing the circle $V$ which leaves $U$ setwise invariant, fixing $u_n$ and its antipodal point on $U$ (which is a vertex if $n$ is even).   Thus for every $i$, we have $R(u_{i})=u_{-i}$ and $R(v_i)=v_{i}$.  While both $h$ and $R$ take edges contained in $U$ and $V$ to other such edges, neither $h$ nor $R$ takes edges of the form $\overline{u_iv_i}$ to other edges.  Thus we are instead interested in the orientation reversing isometry $Rh$.  Observe that $Rh(u_i)=R(v_{ki})=v_{ki}$ and $Rh(v_i)=R(u_{-ki})=u_{ki}$.  Thus for each spoke $e_i$, we have $Rh(e_i)=e_{ki}$.  It follows that $Rh$ takes $\Lambda$ to itself interchanging $U$ and $V$ and $(Rh)^2$ turns over $U$ and $V$, fixing $u_n$ and $v_n$.  In particular, $Rh$ induces $\alpha$ on $\Lambda$.

Next, let $f$ be a glide rotation which rotates $U$ by $\frac{2\pi}{n}$ in the positive direction while rotating $V$ by $\frac{2k\pi}{n}$ in the negative direction, so that $f(u_i)=u_{i+1}$ and $f(v_{i})=v_{i-k^2}=v_{i+1}$.  Thus $f(e_i)=e_{i+1}$.  It follows that $f$ takes $\Lambda$ to itself inducing $\rho$.  Hence $B(n,k)=\mathbb{Z}_n \rtimes \mathbb{Z}_4=\langle \alpha, \rho\rangle\leq \mathrm{TSG}(\Lambda)$.  Now we add the  knot $4_1$ to each of the spokes $e_i$ of $\Lambda$ to get an embedding $\Lambda'$ such that an automorphism is in $B(n,k)$ if and only if it is in $\mathrm{TSG}(\Lambda')$.   Thus $\mathbb{Z}_n \rtimes \mathbb{Z}_4$ is realizable for $P(n,k)$.

  We show as follows that all proper subgroups of $\mathbb{Z}_n \rtimes \mathbb{Z}_4$ are also realizable.  First observe that every subgroup of $B(n,k)$ with no element of order $4$ is a subgroup of $D_n$.  Thus by Lemma~\ref{Thm:Gen_Dn} it is positively realizable, and hence it is also realizable.  
  
  Let $G$ denote a proper subgroup of $B(n,k)$ which contains an element of order $4$.  Since $n$ is not divisible by $4$, $G$ must be isomorphic to $\mathbb{Z}_r \rtimes \mathbb{Z}_4$, for some $r,m>1$ with $rm=n$.   Now starting with $\Lambda'$, we add the achiral invertible knot $6_3$ to the edge $\overline{u_nu_1}$ and all of the edges in its orbit under $\langle \rho^m, \alpha\rangle$ to obtain an embedding $\Lambda''$.  Since $\alpha$ interchanges $U$ and $V$, and $\alpha^2$ flips $U$ and $V$ over fixing $u_n$ and $v_n$, $\Lambda''$ contains the $6_3$ knot on $r$ pairs of adjacent edges on $U$ and $r$ pairs of adjacent edges on $V$, but not on any other edges.  Thus $\mathbb{Z}_r \rtimes \mathbb{Z}_4=\langle \rho^m, \alpha\rangle\leq \mathrm{TSG}(\Lambda'')\leq \mathbb{Z}_r \rtimes \mathbb{Z}_4$.  Hence every subgroup of $\mathbb{Z}_n \rtimes \mathbb{Z}_4$ is realizable.  \end{proof}

If $(n,k)$ is non-exceptional then $\mathrm{Aut}(P(n,k))=B(n,k)$, implying the following.

\begin{corollary}Let $k^2 \equiv-1\pmod{n}$ and suppose that $(n,k)$ is non-exceptional. Then $\mathrm{Aut}(P(n,k))$ and all of its subgroups are realizable for $P(n,k)$.  If $n$ is odd then the only groups which are positively realizable for $P(n,k)$ are $D_n$ and its subgroups, and if $n$ is even, then the only groups which are positively realizable for $P(n,k)$ are $\mathbb{Z}_4$ and $D_n$ and its subgroups.
\end{corollary}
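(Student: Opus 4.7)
The plan is to assemble the corollary directly from Theorem~\ref{TSG(k2=-1)}, Theorem~\ref{order4}, and the theorem of Frucht, Graver, and Watkins quoted in the introduction. First I would observe that since $(n,k)$ is non-exceptional, the result of \cite{FRU} gives $\mathrm{Aut}(P(n,k)) = B(n,k) = \mathbb{Z}_n \rtimes \mathbb{Z}_4$, so every subgroup of $\mathrm{Aut}(P(n,k))$ is a subgroup of $B(n,k)$. In particular, statements about realizability of subgroups of $B(n,k)$ translate verbatim into statements about subgroups of $\mathrm{Aut}(P(n,k))$.

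For the realizability claim, I would simply invoke Theorem~\ref{TSG(k2=-1)}, which says that $B(n,k) = \mathbb{Z}_n \rtimes \mathbb{Z}_4$ and all of its subgroups are realizable for $P(n,k)$. Under the identification $\mathrm{Aut}(P(n,k)) = B(n,k)$ from the previous paragraph, this is exactly the realizability assertion of the corollary.

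For the positive realizability claim, I would apply Theorem~\ref{order4} directly. In the odd case, Theorem~\ref{order4} states that a subgroup $H \leq B(n,k)$ is positively realizable if and only if $H \leq D_n$; since $\mathrm{Aut}(P(n,k)) = B(n,k)$, this gives precisely the list claimed. In the even case, Theorem~\ref{order4} states that $H \leq B(n,k)$ is positively realizable if and only if $H \leq D_n$ or $H = \mathbb{Z}_4$, which yields the second half of the statement.

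There is no real obstacle: the corollary is essentially a bookkeeping step that combines the structural identification of $\mathrm{Aut}(P(n,k))$ in the non-exceptional case with the already-proven classification of (positively) realizable subgroups of $B(n,k)$. The only thing to be careful about is making explicit that "subgroups of $\mathrm{Aut}(P(n,k))$" coincide with "subgroups of $B(n,k)$" under the non-exceptional hypothesis, so that the quoted theorems apply without loss.
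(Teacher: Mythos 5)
Your proposal is correct and matches the paper's own (essentially one-line) justification: the corollary is obtained by combining the identification $\mathrm{Aut}(P(n,k)) = B(n,k)$ from Frucht--Graver--Watkins in the non-exceptional case with Theorem~\ref{TSG(k2=-1)} for realizability and Theorem~\ref{order4} for positive realizability. No gaps.
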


\section{The exceptional case $P(4,1)$}\label{(4,1)}
We know from \cite{FRU1} that $\Aut((P(4,1)) = S_4 \times \mathbb{Z}_2$.  

\begin{proposition}\label{S4Z2}
Let $\Gamma$ be the embedding of $P(4,1)$ in $S^3$ as the $1$-skeleton of a cube.  Then $\mathrm{TSG}(\Gamma) = \mathrm{TSG}_+(\Gamma) = S_4 \times \mathbb{Z}_2$.
\end{proposition}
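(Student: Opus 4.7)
I would first note that the upper bound $\mathrm{TSG}_+(\Gamma) \leq \mathrm{TSG}(\Gamma) \leq \mathrm{Aut}(P(4,1)) = S_4 \times \mathbb{Z}_2$ is automatic, so the task reduces to realizing every automorphism of $P(4,1)$ by an orientation-preserving homeomorphism of $(S^3, \Gamma)$. Equivalently, by Theorem~\ref{GroupRigid}, it suffices to exhibit a finite subgroup of $\mathrm{SO}(4)$ isomorphic to $S_4 \times \mathbb{Z}_2$ that induces the full automorphism group on a suitable representative of the embedding class of $\Gamma$.

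My plan is to reposition the cube as a ``spherical cube'' in $S^3 \subset \mathbb{R}^4$, placing the eight vertices at the points $\frac{1}{\sqrt{3}}(\pm 1, \pm 1, \pm 1, 0)$ on the great $2$-sphere $S^2 = S^3 \cap (\mathbb{R}^3 \times \{0\})$ and connecting adjacent vertices by the shortest geodesic arcs of $S^2$. This is an unknotted planar embedding of the $3$-connected planar graph $P(4,1)$, which by Whitney's theorem and the uniqueness of unknotted planar embeddings in $S^3$ is ambient isotopic to the given $1$-skeleton-of-a-cube embedding, so the two have the same topological symmetry groups.

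For the lower bound, I would let $S_4 \leq \mathrm{SO}(3) \leq \mathrm{SO}(4)$ act on the first three coordinates as the rotation group of the cube, and take $-I \in \mathrm{SO}(4)$ to be the antipodal map of $\mathbb{R}^4$. The crucial point is that $-I$ has determinant $(-1)^4 = 1$ and therefore lies in $\mathrm{SO}(4)$ and acts on $S^3$ as an orientation-preserving isometry, even though its restriction to $\mathbb{R}^3 \times \{0\}$ is the orientation-reversing central inversion of the cube; this reflects the familiar fact that the antipodal map on $S^n$ is orientation-preserving if and only if $n$ is odd. Since $-I$ commutes with $S_4$ and restricts on $\Gamma$ to the generator of the center of $\mathrm{Aut}(P(4,1))$, the subgroup $\langle S_4, -I \rangle \leq \mathrm{SO}(4)$ is isomorphic to $S_4 \times \mathbb{Z}_2$ and induces the full automorphism group. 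Sandwiching $S_4 \times \mathbb{Z}_2 \leq \mathrm{TSG}_+(\Gamma) \leq \mathrm{TSG}(\Gamma) \leq S_4 \times \mathbb{Z}_2$ then yields the three desired equalities, and the main conceptual obstacle --- seeing that central inversion of the cube is orientation-preserving on $S^3$ --- is precisely what the spherical model makes transparent.
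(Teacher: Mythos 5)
Your proof is correct, and it reaches the conclusion by a genuinely different (and arguably cleaner) mechanism than the paper. The paper realizes $S_4$ by rotations of the solid cube and then obtains the remaining $\mathbb{Z}_2$ by flattening the cube to a planar diagram (a small square inside a big square) and ``turning the flattened cube over,'' i.e.\ by a $\pi$-rotation of $S^3$ about the plane of the diagram, which restricts to a reflection of the planar picture. You instead move the skeleton onto a great $2$-sphere and take the antipodal map $-I \in \mathrm{SO}(4)$ as the explicit central $\mathbb{Z}_2$ generator; both arguments ultimately rest on the same principle that an orientation-reversing symmetry of an embedded $2$-sphere extends to an orientation-preserving homeomorphism of $S^3$, but your version exhibits the whole group $S_4 \times \mathbb{Z}_2$ at once as a concrete subgroup of $\mathrm{SO}(4)$, at the cost of an extra step (the ambient isotopy from the cube skeleton to the spherical cube, which you justify via Whitney's theorem and uniqueness of planar embeddings of $3$-connected graphs in $S^3$ --- a heavier citation than the paper's informal flattening, but legitimate). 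Two small remarks: the appeal to Theorem~\ref{GroupRigid} is unnecessary and slightly misapplied --- that theorem passes from a given $\mathrm{TSG}_+$ to an isometric realization on a possibly different embedding, whereas all you need here is the definition of $\mathrm{TSG}_+$ together with its invariance under ambient isotopy; and you should note (as you implicitly do) that adjacent spherical-cube vertices are not antipodal on $S^2$, so the shortest geodesic edges are unique and are permuted by the isometries.
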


\begin{proof} The group $\Aut((P(4,1)) = S_4 \times \mathbb{Z}_2$ is induced on $\Gamma$ by the rotations and reflections of a cube.  To see that $S_4 \times \mathbb{Z}_2$  is also positively realized by $\Gamma$, we  flatten out the cube so it is a small square inside of a big square with edges between them.  Then the automorphisms induced by reflections of the cube can also be induced by turning the flattened cube over.  It follows that $\mathrm{TSG}(\Gamma) =\mathrm{TSG}_+(\Gamma) = S_4 \times \mathbb{Z}_2$.\end{proof}

 The non-trivial automorphisms of $P(4,1)$ which pointwise fix an edge, interchange two pairs of non-adjacent vertices and pointwise fix two non-adjacent edges.

\begin{proposition} \label{S4} $S_4$ and all of its subgroups are positively realizable and hence realizable for $P(4,1)$.   \end{proposition}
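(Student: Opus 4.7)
The plan is to construct an embedding $\Gamma$ of $P(4,1)$ with $\mathrm{TSG}_+(\Gamma)=S_4$ and then to harvest every subgroup of $S_4$ via Theorem~\ref{SubgroupTheorem}. The key input for the Subgroup Theorem step is the observation stated immediately before the proposition: the only non-trivial automorphisms of $P(4,1)$ which pointwise fix an edge interchange two pairs of non-adjacent vertices and pointwise fix two non-adjacent edges. Viewing $\mathrm{Aut}(P(4,1))=S_4\times\mathbb{Z}_2$ as the full symmetry group of the cube with $S_4$ the rotation subgroup, these ``reflection-type'' automorphisms all lie in the non-rotation coset $S_4\times\mathbb{Z}_2\setminus S_4$. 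In particular, no non-trivial element of $S_4$ pointwise fixes any edge, and once we have $\mathrm{TSG}_+(\Gamma)=S_4$, Theorem~\ref{SubgroupTheorem} applied to any edge of $\Gamma$ yields every $H\leq S_4$ as $\mathrm{TSG}_+(\Gamma_H)$ for some embedding $\Gamma_H$ of $P(4,1)$.

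To construct $\Gamma$, I would start from the cube embedding $\Gamma_0$ of Proposition~\ref{S4Z2}, whose $\mathrm{TSG}_+$ is $S_4\times\mathbb{Z}_2$, and attempt to break the central $\mathbb{Z}_2$ while preserving the $S_4$ of rotations by attaching an $S_4$-equivariant chiral decoration to the edges. Since $S_4$ acts simply transitively on the $24$ directed edges of the cube (the stabilizer of a directed edge in $S_4$ is trivial because no non-identity rotation fixes two adjacent cube vertices), there is an $S_4$-invariant coherent direction on every edge; placing the same chiral invertible knot on each edge in accordance with this direction gives an embedding $\Gamma$ that is still invariant under the rotation group. This already forces $S_4\leq\mathrm{TSG}_+(\Gamma)\leq S_4\times\mathbb{Z}_2$, and since these are the only two subgroups of $S_4\times\mathbb{Z}_2$ containing $S_4$, the remaining task is to rule out the full group.

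The main obstacle is precisely this upper-bound step, since the flattening technique from Proposition~\ref{S4Z2} shows that the reflection-type automorphisms can a priori be realized by orientation-preserving homeomorphisms. Assuming $S_4\times\mathbb{Z}_2\leq\mathrm{TSG}_+(\Gamma)$ for contradiction, Theorem~\ref{GroupRigid} realizes this action by a subgroup $G\leq\mathrm{SO}(4)$ on some embedding, and Theorem~\ref{Rigid} lets us take the central involution $z\in G$ (which has no fixed vertex) to be a finite-order orientation-preserving isometry. Smith Theory then gives $\mathrm{fix}(z)\cong S^1$ or $\mathrm{fix}(z)=\emptyset$; commutativity of $z$ with $S_4$ forces this fixed set to be $S_4$-invariant, and a geometric analysis---either via the Involution Theorem~\ref{involution} applied to the involutions of $G$, or by tracing the forced action of $z$ on the chirally decorated directed edges---shows that neither case is consistent with the embedding, producing the contradiction. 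With the upper bound $\mathrm{TSG}_+(\Gamma)\leq S_4$ in hand, the Subgroup Theorem step from the first paragraph completes the proof.
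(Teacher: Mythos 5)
There is a genuine gap, and it sits exactly where you locate the difficulty: producing an embedding whose $\mathrm{TSG}_+$ is $S_4$ and not $S_4\times\mathbb{Z}_2$. First, your construction rests on a false premise. If $S_4$ acts \emph{simply transitively} on the $24$ directed edges of the cube (which it does, for the reason you give), then for each edge $e$ there is a rotation carrying one direction of $e$ to the opposite direction of $e$ --- indeed the $\pi$-rotation about the axis through the midpoints of $e$ and its opposite edge reverses $e$. Simple transitivity therefore implies the \emph{non}-existence of an $S_4$-invariant orientation of the edges, not its existence. Second, even setting the orientation issue aside, decorating every edge with the same chiral invertible knot cannot break the extra $\mathbb{Z}_2$: Proposition~\ref{S4Z2} shows the reflection-type automorphisms are induced by \emph{orientation-preserving} homeomorphisms of the standard cube embedding (the flattened-cube trick), and such homeomorphisms preserve both the chirality and the (irrelevant, since the knot is invertible) orientation of the decorations. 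So for your $\Gamma$ one still gets $\mathrm{TSG}_+(\Gamma)=S_4\times\mathbb{Z}_2$. Relatedly, your closing contradiction argument cannot work as described: since $S_4\times\mathbb{Z}_2$ \emph{is} positively realizable for $P(4,1)$, no argument using only Theorems~\ref{Rigid}, \ref{GroupRigid}, \ref{involution} and Smith Theory applied to the abstract group can rule it out; the obstruction must come from specific knotting data, which your embedding does not supply.

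The paper's proof instead builds the asymmetry into tangles at the vertices of the cube so that the $4$-cycle $\overline{v_1v_2v_3v_4}$ becomes a connected sum of four trefoils and four copies of a \emph{non-invertible} knot $J$. A homeomorphism inducing the reflection-type automorphism $\delta=(u_1u_3)(v_1v_3)$ would have to be orientation preserving (because of the trefoils) and would have to flip over two of the copies of $J$ in that cycle, contradicting non-invertibility of $J$; maximality of $S_4$ in $S_4\times\mathbb{Z}_2$ then gives $\mathrm{TSG}_+(\Gamma)=S_4$. Non-invertibility, not chirality, is the property that detects the offending automorphism. Your first paragraph (no non-trivial rotation fixes an edge pointwise, so Theorem~\ref{SubgroupTheorem} yields all subgroups) does match the paper and is correct, but the core construction and upper-bound step need to be replaced.
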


  \begin{figure}[h]
\begin{center}
\centering\includegraphics[height=0.28\textheight]{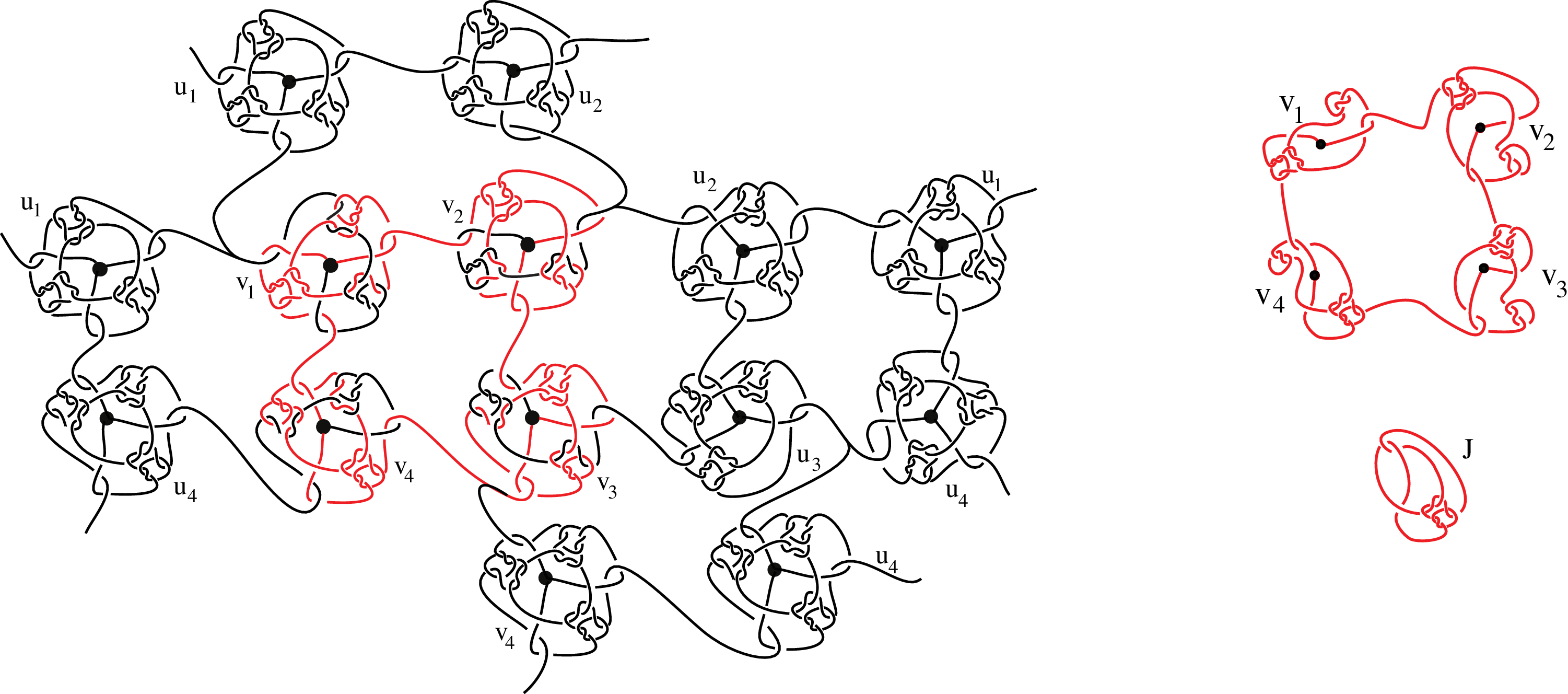}
\caption{The embedding $\Gamma$ of $P(4,1)$ with $\mathrm{TSG}_+(\Gamma) = S_4$ is obtained from the left image by identifying vertices with the same labels and pairs of adjacent branched edges. On the right, we see that $\overline{v_1v_2v_3v_4}$ is the connected sum of four copies of $J$ and four trefoils.}
\label{Fig:Woven}
\end{center}
\end{figure}

 \begin{proof} Let $\Gamma$ be an embedding of $P(4,1)$ as the skeleton of a cube with tangling around the vertices as indicated in the unfolded projection in Figure~\ref{Fig:Woven}.  On the right, we illustrate the knot $\overline{v_1v_2v_3v_4}$.

  Observe that the rotations of a solid cube leave $\Gamma$ invariant, inducing the automorphisms $\rho= (u_1u_2u_3u_4)(v_1v_2v_3v_4)$ and $\sigma = (u_1v_2v_4)(u_2v_3u_4)$ which generate $S_4$.  Thus $S_4=\langle \rho, \sigma \rangle \leq \mathrm{TSG}_+(\Gamma)$.  Let $\delta = (u_1u_3)(v_1v_3)$ and note that  $ \Aut((P(4,1)) =S_4 \times \mathbb{Z}_2=  \langle \rho, \sigma,\delta \rangle$ has $S_4=\langle \rho, \sigma \rangle$ as a maximal subgroup.  We show below that $\delta\not\in\mathrm{TSG}_+(\Gamma)$.

 Suppose that $\delta$ is induced on $\Gamma$ by a homeomorphism $h$ of $S^3$.  Because $\Gamma$ contains a $3_1$ knot but not its mirror image, $h$ must be orientation preserving.  Now $h$ induces the automorphism $(v_1v_3)$ of the knot $\overline{v_1v_2v_3v_4}$, which is a connected sum of four copies of $J$ and four $3_1$ knots.  It follows that $h$ interchanges two copies of $J$ while flipping over the other two copies of $J$.  But by applying the machinery of Bonahon and Siebenmann for algebraic knots \cite{BS} to $J$, we see that $J$ is non-invertible.  Thus $h$ cannot exist.  Hence $\delta\not\in\mathrm{TSG}_+(\Gamma)$, and therefore $\mathrm{TSG}_+(\Gamma) = S_4$.   
 
Since none of the non-trivial elements of $\langle \rho, \sigma \rangle=\mathrm{TSG}_+(\Gamma) = S_4$ pointwise fix an edge, we can use Theorem \ref{SubgroupTheorem} to positively realize all of the subgroups of $S_4$.   \end{proof}

\begin{proposition}
$ A_4 \times \mathbb{Z}_2$ is positively realizable and hence realizable for $P(4,1)$.
 \end{proposition}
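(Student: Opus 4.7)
The plan is to construct an embedding $\Gamma$ of $P(4,1)$ in $S^3$ with $\mathrm{TSG}_+(\Gamma) = A_4 \times \mathbb{Z}_2$. Viewing $P(4,1)$ as the $1$-skeleton of a cube, $A_4 \leq S_4$ is the subgroup of cube rotations preserving the bipartition of the 8 vertices into the two inscribed regular tetrahedra $T$ and $T'$, while the central $\mathbb{Z}_2$ is generated by the central inversion $c$ (which swaps $T$ and $T'$). Since $c$ is orientation reversing in $\mathbb{R}^3$, I would embed in $S^3 \subset \mathbb{R}^4$ and use the antipodal map $-I_4 \in SO(4)$, which has determinant $+1$ and is therefore orientation preserving on $S^3$.

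For the lower bound, I would place $T$ on the small $2$-sphere $\{x_4 = s\} \cap S^3$ for some $0 < s < 1$ and $T' = -T$ on $\{x_4 = -s\} \cap S^3$, labeled so that $-I_4$ realizes $c$ on the vertex set; the tetrahedral rotation group $A_4 \subset SO(3) \hookrightarrow SO(4)$ preserves each tetrahedron and commutes with $-I_4$, yielding an isomorphic copy of $A_4 \times \mathbb{Z}_2 \leq SO(4)$ acting on the $8$ vertices as graph automorphisms. The $12$ cube edges can be added equivariantly: although certain involutions of the group swap pairs of adjacent vertices (so the Edge Embedding Lemma does not apply verbatim), each such involution fixes a great circle in $S^3$ through which the corresponding arc can be chosen to pass at its midpoint. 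This gives $A_4 \times \mathbb{Z}_2 \leq \mathrm{TSG}_+(\Gamma)$.

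For the reverse inclusion, since $A_4 \times \mathbb{Z}_2$ has index $2$ in $\mathrm{Aut}(P(4,1)) = S_4 \times \mathbb{Z}_2$, it suffices to rule out any single automorphism in $(S_4 \setminus A_4) \cdot \{e, c\}$. Following the template of Proposition~\ref{S4}, I would weave a non-invertible knotted tangle through the vertices in an $A_4 \times \mathbb{Z}_2$-equivariant pattern, arranged so that on some $4$-cycle of the cube the resulting knot is a connect sum of four copies of a non-invertible knot $J$ carrying a definite cyclic orientation. A putative realization of a $90^\circ$ face rotation would then act on this knot as a $4$-fold rotation, which would require reversing at least one copy of $J$, contradicting non-invertibility via the Bonahon--Siebenmann analysis. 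The main obstacle is that the antipodal map $-I_4$ is orientation preserving on $S^3$ (unlike the central inversion of $\mathbb{R}^3$) and hence locally preserves chirality, so the tangle pattern cannot distinguish $T$ from $T'$ by a ``left/right'' chirality of per-vertex decorations; the obstruction to $S_4 \setminus A_4$ must instead be encoded in the global cyclic structure of the non-invertible tangle along a single face of the cube.
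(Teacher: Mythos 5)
Your lower bound is essentially sound and close in spirit to the paper's: the paper also realizes $A_4\times\mathbb{Z}_2$ by a group of rotations of $(S^3,\Gamma)$ (it uses rotations of $S^3$ about circles to turn the cube inside out where you use $-I_4$), and your reduction, via maximality of $A_4\times\mathbb{Z}_2$ in $S_4\times\mathbb{Z}_2$, to excluding a single automorphism of the complement is exactly the paper's logic. The genuine gap is in the exclusion step. You propose to make some face $4$-cycle a connected sum of four copies of a non-invertible knot $J$ ``carrying a definite cyclic orientation'' and to claim that a realization of the $90^\circ$ face rotation would have to reverse a copy of $J$. If the four copies are coherently oriented around the cycle, this is false: the quarter-turn permutes them cyclically preserving every orientation, which is precisely the kind of period such a connected sum can admit, so non-invertibility gives no contradiction. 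If instead you try to set this up with one oriented copy of $J$ per edge, you cannot make every face a coherently oriented cycle (the two faces containing an edge induce opposite orientations on it, so at each degree-$3$ vertex some face gets two incoming edges), and the natural alternative of orienting every edge from its $T$-endpoint to its $T'$-endpoint cuts out the wrong index-$2$ subgroup --- the twisted copy of $S_4$ preserving each tetrahedron --- and in particular kills the antipodal automorphism $c$ that you must keep. A decoration on a single face, by contrast, is not equivariant and destroys the $A_4\times\mathbb{Z}_2$ symmetry you just built. Your closing sentence concedes that this obstacle is unresolved, so the proof does not go through as written.

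The paper's exclusion is different and sidesteps orientations entirely: in its embedding the face cycle $\overline{u_1u_2u_3u_4}$ is a connected sum of exactly \emph{two} trefoils. The half-turn $\beta^2\in A_4$ is still realized (it exchanges the two summands), but the quarter-turn $\beta$ would induce an order-$4$ symmetry of $3_1\#3_1$, which does not exist; maximality then gives $\mathrm{TSG}_+(\Gamma)=A_4\times\mathbb{Z}_2$. To repair your argument you need an unoriented obstruction of this type --- a knot on an invariant cycle whose symmetry group contains the required $\mathbb{Z}_2$ but no $\mathbb{Z}_4$ --- rather than one based on non-invertibility.
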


\begin{proof} Let $\Gamma$ be the embedding of $P(4,1)$ whose unfolded projection on the faces of a cube is illustrated on the left in Figure~\ref{UnfoldedCube}, with the projection on a single face illustrated on the right.  
  \begin{figure}[h!]
\begin{center}
\centering\includegraphics[height=0.17\textheight]{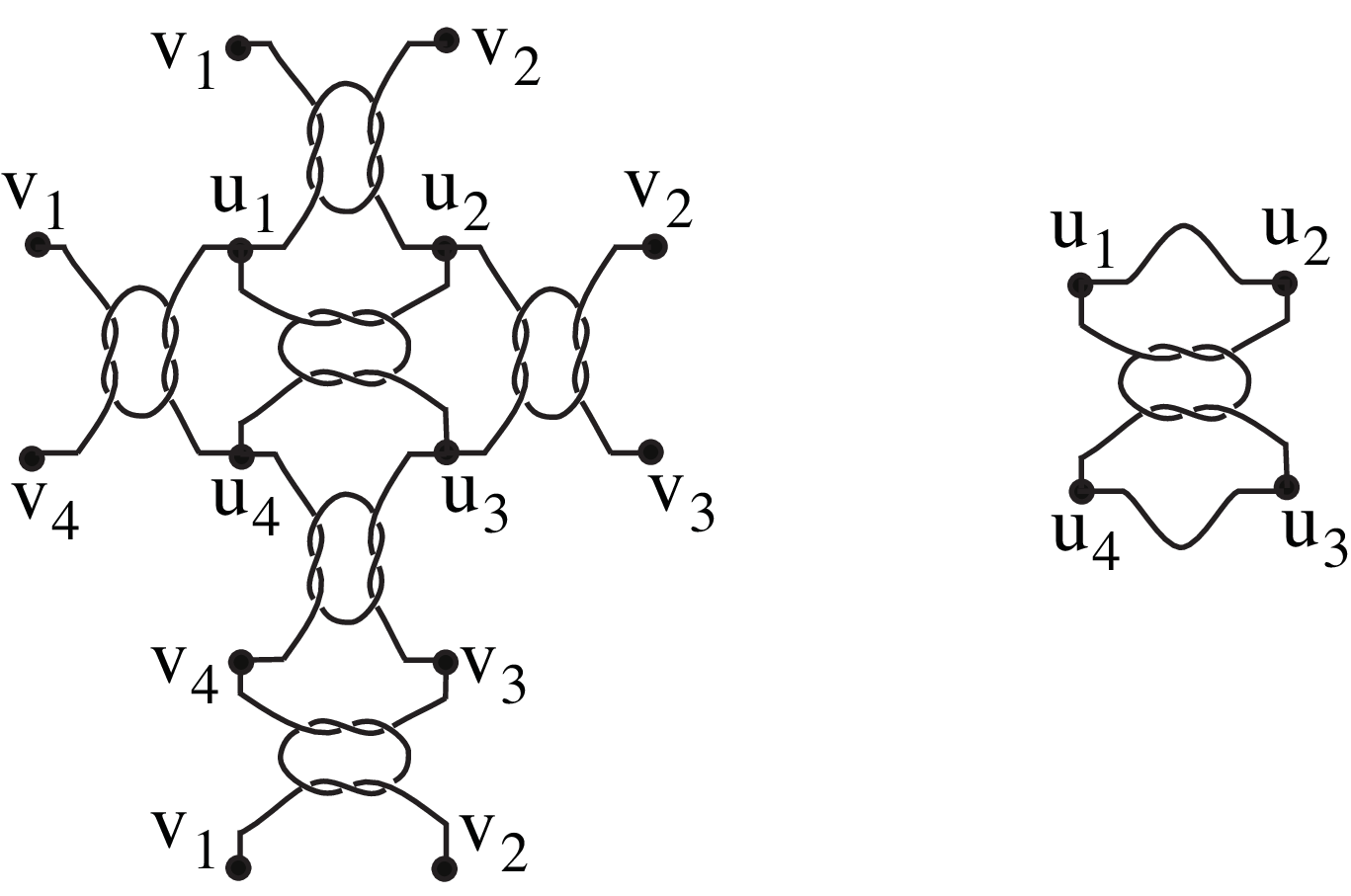}
\caption{The embedding $\Gamma$ of $P(4,1)$ with $\mathrm{TSG}_+(\Gamma) =A_4 \times \mathbb{Z}_2$ is obtained from the projection on the left by identifying vertices with the same labels.  On the right, we have the projection on a single face.}
\label{UnfoldedCube}
\end{center}
\end{figure}

\noindent Let 
$\alpha=(u_2u_4v_1)(u_3v_4v_2)$, $\beta=(u_1u_2u_3u_4)(v_1v_2v_3v_4)$, $\delta=(u_1v_1)(u_2v_2)(u_3v_3)(u_4v_4).$ 
Then $\langle \alpha, \beta,\delta\rangle=S_4\times \mathbb{Z}_2$, and has $\langle \alpha, \beta^2,\delta\rangle=A_4\times \mathbb{Z}_2$ as a maximal subgroup.  Observe that $\alpha$ is induced on $\Gamma$ by a rotation of $\frac{2\pi}{3}$ around an axis through vertices $u_1$ and $v_3$, and $\delta$ is induced on $\Gamma$ by a rotation around an equator of the cube that is parallel to $\overline{u_1u_2u_3u_4}$ and $\overline{v_1v_2v_3v_4}$.  Also, $\beta^2$ is induced by a rotation by $\pi$ around an axis through center of the faces containing $\overline{u_1u_2u_3u_4}$ and $\overline{v_1v_2v_3v_4}$.  However, the knot $\overline{u_1u_2u_3u_4}$ is the connected sum of two trefoils and hence does not have an order $4$ symmetry (see the right side of Figure~\ref{UnfoldedCube}).  Thus $\beta$ is not induced by a homeomorphism taking $\Gamma$ to itself.  It follows that $\mathrm{TSG}(\Gamma) =\mathrm{TSG}_+(\Gamma) =A_4 \times \mathbb{Z}_2$.  \end{proof}

\begin{proposition}\label{Cor:D4Z2}$ D_4 \times \mathbb{Z}_2$, $ D_3\times \mathbb{Z}_2 $, and all of their subgroups are positively realizable and hence realizable for $P(4,1)$.\end{proposition}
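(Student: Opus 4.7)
The plan is to realize $D_4\times\mathbb{Z}_2$ and $D_3\times\mathbb{Z}_2$ each as $\mathrm{TSG}_+$ of a single embedding of $P(4,1)$, then obtain all of their subgroups from Theorem~\ref{SubgroupTheorem}. Both constructions will start from the flattened cube embedding $\Gamma_0$ of Proposition~\ref{S4Z2}, for which $\mathrm{TSG}_+(\Gamma_0)=S_4\times\mathbb{Z}_2=\Aut(P(4,1))$ is positively realized, and I will decorate a chosen subset of edges with the achiral invertible knot $4_1$. Because $4_1$ is preserved under reversal of either the orientation of $S^3$ or of the arc, any orientation preserving homeomorphism inducing an automorphism of $\Gamma_0$ that sends the decorated edge set to itself will descend to a symmetry of the decorated embedding; equivalently, the $\mathrm{TSG}_+$ of the decorated embedding equals the setwise stabilizer in $S_4\times\mathbb{Z}_2$ of the decorated edge set.

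For $D_4\times\mathbb{Z}_2$, I will decorate the eight edges of the two $4$-cycles $\overline{u_1u_2u_3u_4}$ and $\overline{v_1v_2v_3v_4}$, leaving the four spokes bare; call this embedding $\Gamma_1$. Since every other $4$-cycle in $P(4,1)$ uses two spokes, the decorated pair will be the unique pair of $4$-cycles in $\Gamma_1$ all of whose edges carry $4_1$, and every element of $\mathrm{TSG}_+(\Gamma_1)$ must preserve it. The action of $S_4\times\mathbb{Z}_2$ on the three pairs of opposite faces of the cube is transitive, so the stabilizer of this distinguished pair has order $16$ and is $D_4\times\mathbb{Z}_2$, generated by a quarter turn about the axis through the two face-centers, a $\pi$-rotation about a perpendicular horizontal axis, and the antipodal involution. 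This will give $\mathrm{TSG}_+(\Gamma_1)=D_4\times\mathbb{Z}_2$.

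For $D_3\times\mathbb{Z}_2$, I will instead decorate only the six edges incident to the antipodal vertex pair $\{u_1,v_3\}$ (the three at $u_1$ together with the three at $v_3$); call this embedding $\Gamma_2$. The decorated subgraph is a disjoint union of two copies of $K_{1,3}$ whose centers are characterized as the only vertices incident to three decorated edges, so every element of $\mathrm{TSG}_+(\Gamma_2)$ must preserve $\{u_1,v_3\}$ setwise. Since $S_4\times\mathbb{Z}_2$ acts transitively on the four antipodal vertex pairs (body diagonals of the cube), the stabilizer of $\{u_1,v_3\}$ has order $12$ and equals $D_3\times\mathbb{Z}_2$, generated by the order-$3$ rotation about $\overline{u_1v_3}$, a $\pi$-rotation about a perpendicular axis, and the antipodal involution. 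Hence $\mathrm{TSG}_+(\Gamma_2)=D_3\times\mathbb{Z}_2$.

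To conclude I will apply Theorem~\ref{SubgroupTheorem}. A direct check of fixed vertex sets shows that in $D_4\times\mathbb{Z}_2$ the only non-identity elements fixing any vertex are the two involutions obtained by composing the antipodal map with a horizontal $\pi$-rotation about a spoke-midpoint axis; their fixed vertex sets are $\{u_1,u_3,v_1,v_3\}$ and $\{u_2,u_4,v_2,v_4\}$, neither of which contains $\{u_1,u_2\}$, so the outer edge $\overline{u_1u_2}$ of $\Gamma_1$ will satisfy the hypothesis of the theorem. In $D_3\times\mathbb{Z}_2$ the non-identity elements with any vertex fixed are the two order-$3$ rotations (fixing only $\{u_1,v_3\}$) together with three involutions each fixing $\{u_1,v_3\}$ plus exactly one additional antipodal pair; since $\{u_2,u_3\}$ lies in none of these sets, the edge $\overline{u_2u_3}$ of $\Gamma_2$ also works. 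Theorem~\ref{SubgroupTheorem} will then yield every subgroup of $D_4\times\mathbb{Z}_2$ and of $D_3\times\mathbb{Z}_2$ as positively realizable for $P(4,1)$. The hardest step, I expect, will be pinning down the upper bound $\mathrm{TSG}_+(\Gamma_i)\leq$ (the intended group); in each case this reduces to identifying the stabilizer in $\Aut(P(4,1))$ of the distinguished combinatorial object (a pair of opposite $4$-cycles, or a pair of antipodal vertex-stars), which becomes transparent via the transitive actions of $S_4\times\mathbb{Z}_2$ described above.
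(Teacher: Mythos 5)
Your proof is correct, but it takes a genuinely different route from the paper's on the $D_4\times\mathbb{Z}_2$ half. The paper builds no new embedding for $D_4\times\mathbb{Z}_2$: it simply recycles the torus embedding of Theorem~\ref{k2=1}, which already positively realizes $D_n\rtimes\mathbb{Z}_2$ \emph{and all of its subgroups}, and observes that when $k=1$ the cycle-swapping generator commutes with the rotation as well, so $D_4\rtimes\mathbb{Z}_2=D_4\times\mathbb{Z}_2$; the subgroup statement is then inherited for free. Your alternative --- decorating the two distinguished $4$-cycles of the flattened cube from Proposition~\ref{S4Z2} with $4_1$ knots and identifying $\mathrm{TSG}_+$ with the order-$16$ stabilizer of a pair of opposite faces --- is sound, and your fixed-vertex analysis for Theorem~\ref{SubgroupTheorem} is accurate (the only non-trivial elements of that stabilizer fixing vertices are the two diagonal reflections, whose fixed sets $\{u_1,u_3,v_1,v_3\}$ and $\{u_2,u_4,v_2,v_4\}$ both miss $\{u_1,u_2\}$), but it costs you a fresh construction and an extra application of the Subgroup Theorem that the paper avoids. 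For $D_3\times\mathbb{Z}_2$ your construction is essentially the paper's (knots on the six edges incident to the antipodal pair $\{u_1,v_3\}$), differing only in that you use $4_1$ where the paper uses trefoils, and in that you obtain the upper bound by computing the stabilizer of the antipodal pair via orbit--stabilizer ($48/4=12$) where the paper instead rules out the $4$-cycle rotation by the knotted-versus-unknotted-edge argument and invokes maximality of $D_3\times\mathbb{Z}_2$ in $S_4\times\mathbb{Z}_2$; both are valid, and your stabilizer computation is arguably cleaner since it does not rely on knowing the maximal subgroups. The one place to be slightly more careful is the lower bound $\mathrm{TSG}_+(\Gamma_i)\geq$ the stabilizer: you should insert the knotted arcs equivariantly (for instance after passing to an isometric realization via Theorem~\ref{GroupRigid}), so that the homeomorphisms realizing the stabilizer on $\Gamma_0$ genuinely restrict to homeomorphisms of the decorated pair; the amphichirality and invertibility of $4_1$ then guarantee this can be arranged, and this is the same level of detail at which the paper itself treats knot insertion.
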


\begin{proof} In the proof of Theorem~\ref{k2=1}, we saw that $D_n \rtimes \mathbb{Z}_2=\langle \rho, \delta, \alpha\rangle$, where $\rho$, $\delta$, and $\alpha$ are defined there.  Furthermore, $\langle \rho, \delta\rangle=D_n$, $\langle \alpha\rangle=\mathbb{Z}_2$, and $\alpha$ commutes with $\delta$.  When $k=1$, $\alpha$ also commutes with $\rho$.  Thus for $P(4,1)$, we have $\langle \rho, \delta, \alpha\rangle=D_4 \times \mathbb{Z}_2$.  It follows that $ D_4 \times \mathbb{Z}_2$ and all its subgroups are positively realizable for $P(4,1)$.

To show that $ D_3\times \mathbb{Z}_2 $ is positively realizable, we let $\Gamma$ be the skeleton of a cube with identical trefoil knots on the six edges containing $u_1$ or $v_3$.  A rotation by $\frac{2\pi}{3}$ around an axis through $u_1$ and $v_3$ induces the automorphism $\alpha=(u_2v_1u_4)(v_2v_4u_3)$, a rotation by $\pi$ around an axis containing $\overline{u_1v_1}$ and $\overline{u_3v_3}$ induces $\beta=(u_4u_2)(v_4v_2)$, and a rotation by $\pi$ around an axis through the middle of the edges $\overline{u_4v_4}$ and $\overline{u_2v_2}$ induces the automorphism $\delta=(u_4v_4)(u_2v_2)(u_1v_3)(v_1u_3)$.  Now $\langle \alpha,\beta, \delta\rangle=D_3\times \mathbb{Z}_2$, which is a maximal subgroup of $S_4 \times \mathbb{Z}_2$.

No homeomorphism can induce $(u_1u_2u_3u_4)(v_1v_2v_3v_4)$, since such a homeomorphism would send knotted edges to unknotted edges.  Thus we have $\mathrm{TSG}_+(\Gamma) =D_3\times \mathbb{Z}_2$.  Furthermore, the edge $\overline{u_4v_4}$ is not pointwise fixed by any non-trivial element of $\mathrm{TSG}_+(\Gamma)$, and hence  by Theorem \ref{SubgroupTheorem} every subgroup of $D_3\times \mathbb{Z}_2$ is positively realizable.  \end{proof}

In summary, we have proved the following.

\begin{theorem}
$\mathrm{Aut}(P(4,1))= S_4 \times \mathbb{Z}_2$ and all of its subgroups are positively realizable and hence realizable for $P(4,1)$.
\end{theorem}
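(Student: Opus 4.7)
The plan is to assemble the theorem from the propositions already established in this section rather than construct any new embedding. Proposition~\ref{S4Z2} already realizes the full automorphism group $S_4\times\mathbb{Z}_2$, so only the ``all of its subgroups'' clause requires further argument.

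First I would enumerate, with the aid of Sage or the GroupNames database referenced in the introduction, the isomorphism types of subgroups of $S_4\times\mathbb{Z}_2$ and sort them by which earlier proposition supplies them. The types contained in $S_4$ --- namely $\{e\}$, $\mathbb{Z}_2$, $\mathbb{Z}_3$, $\mathbb{Z}_4$, $V_4$, $D_3$, $D_4$, $A_4$, and $S_4$ --- are all delivered by Proposition~\ref{S4}, which applies the Subgroup Theorem to an embedding with $\mathrm{TSG}_+=S_4$ having no edge pointwise fixed by a non-trivial element. The remaining types meeting the central $\mathbb{Z}_2$ factor are $D_4\times\mathbb{Z}_2$, $\mathbb{Z}_4\times\mathbb{Z}_2$, $\mathbb{Z}_2^{3}$, $D_3\times\mathbb{Z}_2$, and $\mathbb{Z}_6$; these are all supplied by Proposition~\ref{Cor:D4Z2}, again via the Subgroup Theorem applied to the embeddings constructed there.

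The two types not yet accounted for are $A_4\times\mathbb{Z}_2$, handled by the direct construction in the proposition immediately preceding Proposition~\ref{Cor:D4Z2}, and $S_4\times\mathbb{Z}_2$ itself, handled by Proposition~\ref{S4Z2}.

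The step requiring the most care is confirming that the catalog above really does exhaust the list of isomorphism types of subgroups of $S_4\times\mathbb{Z}_2$ --- in particular, checking that no other group (for instance, of order $12$ or $16$) appears as a subgroup. This is a finite verification best done by appeal to GroupNames or Sage; the key observation is that $|S_4\times\mathbb{Z}_2|=48$ forces every proper subgroup either to be contained in $S_4$, $A_4\times\mathbb{Z}_2$, $D_4\times\mathbb{Z}_2$, or $D_3\times\mathbb{Z}_2$, all of which are already known to be positively realizable together with all their subgroups. Finally, the ``hence realizable'' clause is automatic from the observation recorded after Theorem~\ref{GroupRigid} that attaching a chiral invertible knot to every edge of a $\mathrm{TSG}_+$-realizing embedding produces an embedding whose full $\mathrm{TSG}$ coincides with the original $\mathrm{TSG}_+$.
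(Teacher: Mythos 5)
Your proposal is correct and takes essentially the same route as the paper: the theorem there is stated as a summary of Propositions~\ref{S4Z2}, \ref{S4}, the $A_4\times\mathbb{Z}_2$ proposition, and Proposition~\ref{Cor:D4Z2}, with the (implicit) observation that every subgroup of $S_4\times\mathbb{Z}_2$ lies in one of the maximal subgroups $S_4$, $A_4\times\mathbb{Z}_2$, $D_4\times\mathbb{Z}_2$, $D_3\times\mathbb{Z}_2$, all of which are covered together with their subgroups. You merely make that bookkeeping explicit, which is a fair and accurate rendering of the paper's argument.
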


\section{The exceptional case $P(8,3)$ }\label{P(8,3)}

 It follows from \cite{FRU1} that $\mathrm{Aut}(P(8,3)) = GL(2,3) \rtimes \mathbb{Z}_2= \langle \mu, \beta, \gamma \rangle $, where
\begin{align*}
\mu &= (u_1u_7v_8)(u_2v_7 v_5)(u_3v_4u_5)(u_6v_3v_1) \\
\beta &= (u_1u_7)(u_2u_6)(u_3u_5)(v_1v_7)(v_2v_6)(v_3v_5) \\
\gamma &=(u_1u_2u_3u_4u_5u_6u_7u_8)(v_1v_2v_3v_4v_5v_6v_7v_8).
\end{align*}
Throughout this section, we will use the embeddings of $P(8,3)$ illustrated in Figure \ref{Fig:AutP83}.  The bottom embedding is the same as the one on the right except that the edge $\overline{u_8v_8}$ passes through the point at $\infty$.  The steps in Figure \ref{Fig:AutP83} show that the embeddings are all isotopic.  Thus we abuse notation and refer to all of them as $\Gamma$.

\begin{figure}[h!]
\begin{center}\centering\includegraphics[scale=.11]{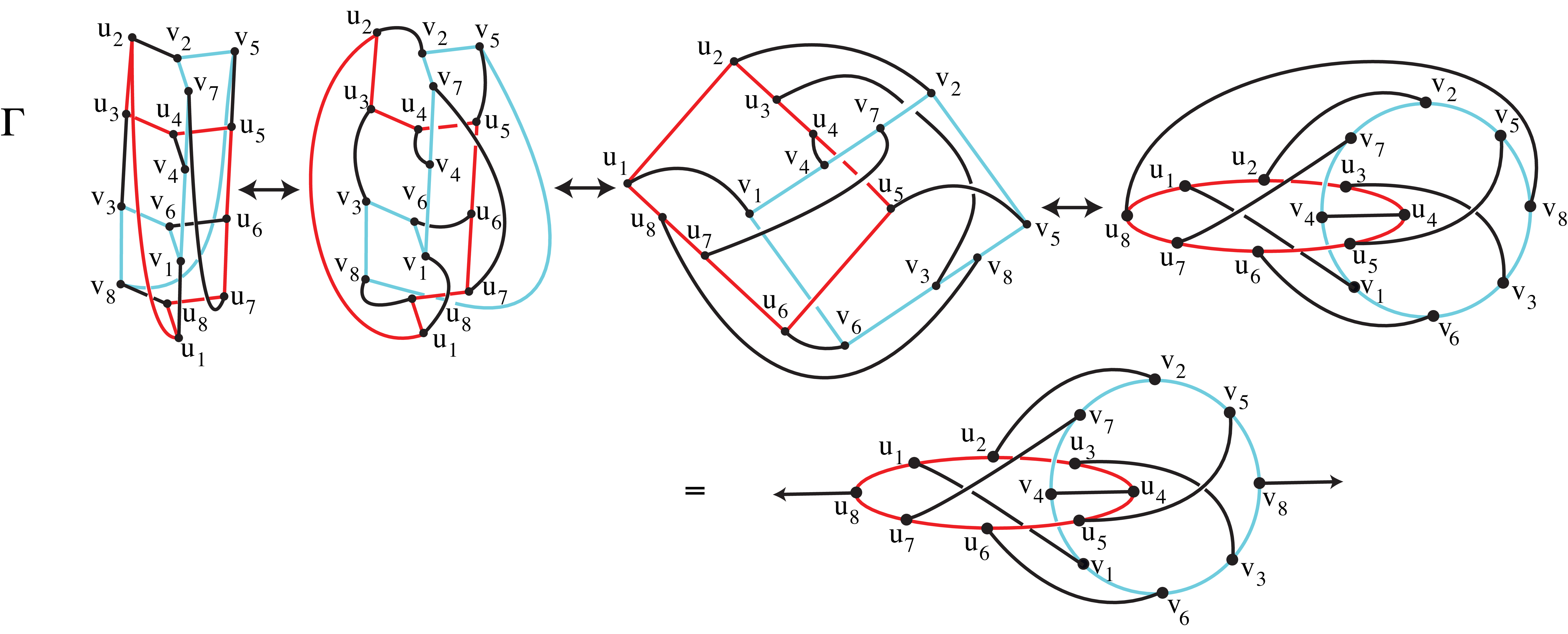}
\caption{ An embedding $\Gamma$ of $P(8,3)$ with $\mathrm{TSG}_+(\Gamma)= \mathrm{Aut}(P(8,3))$.  }
\label{Fig:AutP83}
\end{center}
\end{figure}

  Observe that $\mu$ is induced on the leftmost embedding by a rotation by $2\pi/3$ around the axis containing $u_8,v_6, u_4,v_2$. We see that $\gamma$ is induced on the right embedding by rotating the $u$-cycle by $\pi/4$ while rotating the $v$-cycle by $3\pi/4$.  Finally, $\beta$ is induced on the bottom embedding by a rotation by $\pi$ around the axis through $\infty$ containing the edges $\overline{u_4v_4}$ and $\overline{u_8v_8}$.  Since $\mathrm{Aut}(P(8,3)) = \langle \mu, \beta, \gamma \rangle$, we now have $\mathrm{TSG}_+(\Gamma)= \mathrm{Aut}(P(8,3))$.   Hence we have proved the following.

\begin{proposition} $GL(2,3) \rtimes \mathbb{Z}_2= \mathrm{Aut}(P(8,3))$ is positively realizable and hence realizable for $P(8,3)$.
\end{proposition}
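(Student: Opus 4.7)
The plan is to exhibit a single embedding $\Gamma$ of $P(8,3)$ that is simultaneously invariant under three orientation-preserving homeomorphisms of $S^3$ inducing the generators $\mu$, $\beta$, and $\gamma$ of $\mathrm{Aut}(P(8,3)) = GL(2,3) \rtimes \mathbb{Z}_2$. Since $\mathrm{TSG}_+(\Gamma) \leq \mathrm{Aut}(P(8,3))$ automatically, the reverse inclusion follows from showing that each of the three generators is induced by a rotation. Positive realizability then immediately implies realizability by the general remark in the introduction.

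The strategy I would use is to first build three different projections of $P(8,3)$, each chosen so that its symmetry under an ambient rotation of $S^3$ induces exactly one generator: an order-$3$ rotation inducing the order-$3$ permutation $\mu$ around an axis meeting the vertex set symmetrically (with fixed points at the appropriate spoke positions), a $\pi/4$--$3\pi/4$ glide rotation of a standard torus inducing the eight-cycle $\gamma$ (rotating the outer cycle $u_1,\dots,u_8$ by $\pi/4$ while rotating the inner cycle $v_1,\dots,v_8$ by $3\pi/4$, which is consistent with $k=3$ since $v_{3i}$ lands where $v_{i+1}$ should go under $\gamma$), and finally an order-$2$ rotation inducing the involution $\beta$ with fixed axis passing through the two invariant spokes $\overline{u_4 v_4}$ and $\overline{u_8 v_8}$ (here it is convenient to send $\overline{u_8 v_8}$ through $\infty$ so that the axis is a straight line in $\mathbb{R}^3$).

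The main obstacle is to verify that these three a priori different looking projections in fact represent the same spatial embedding, i.e.\ to produce an ambient isotopy between them. I would do this via an explicit sequence of Reidemeister-like isotopy moves drawn on the diagram, corresponding to the intermediate pictures in Figure~\ref{Fig:AutP83}: one isotopy repositions the distinguished vertices of the $\mu$-symmetric picture onto the torus of the $\gamma$-symmetric picture, and a second isotopy pulls one spoke through $\infty$ to produce the $\beta$-symmetric picture. The key check at each step is that no edge of $P(8,3)$ gets obstructed by another edge, which can be argued diagrammatically since $P(8,3)$ has only $24$ edges and the positions of the vertices on the torus are fairly rigidly determined.

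Once isotopy is established, I refer to the common embedding as $\Gamma$. Then $\mu, \beta, \gamma \in \mathrm{TSG}_+(\Gamma)$, so
\[
\mathrm{Aut}(P(8,3)) = \langle \mu, \beta, \gamma \rangle \leq \mathrm{TSG}_+(\Gamma) \leq \mathrm{Aut}(P(8,3)),
\]
giving equality. The final sentence of the proof just records that positive realizability implies realizability by placing a chiral invertible knot (e.g.\ $4_1$ is achiral so one would instead use, say, $3_1$ plus its mirror summed, or any chiral invertible knot) on every edge; but in fact this is already noted in the introduction, so no further work is needed.
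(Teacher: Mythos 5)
Your proposal is essentially the paper's own argument: the authors likewise exhibit three isotopic diagrams of $P(8,3)$ (the ones in Figure~\ref{Fig:AutP83}), realize $\mu$ by an order-$3$ rotation about the axis through $u_8,v_6,u_4,v_2$, $\gamma$ by the $\pi/4$--$3\pi/4$ glide rotation of the torus picture, and $\beta$ by a $\pi$-rotation about the axis through $\overline{u_4v_4}$ and $\overline{u_8v_8}$ (with $\overline{u_8v_8}$ through $\infty$), and conclude $\mathrm{TSG}_+(\Gamma)=\langle\mu,\beta,\gamma\rangle=\mathrm{Aut}(P(8,3))$ exactly as you do. The only work in either version is the diagrammatic isotopy between the three pictures, which the paper carries by the intermediate frames of the same figure you cite.
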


All of the proper subgroups of $GL(2,3) \rtimes \mathbb{Z}_2$ are contained in the maximal groups $D_{12}$, $D_8 \rtimes \mathbb{Z}_2$, $GL(2,3)$, or $SL(2,3) \rtimes \mathbb{Z}_2$, which are addressed individually below.

\begin{proposition}
$D_{12}$ and all of its subgroups are positively realizable and hence realizable for $P(8,3)$.
\end{proposition}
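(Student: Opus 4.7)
The approach is to construct an embedding $\Gamma$ of $P(8,3)$ with $\mathrm{TSG}_+(\Gamma) = D_{12}$ and then invoke the Subgroup Theorem (Theorem~\ref{SubgroupTheorem}).

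The first step is to identify an explicit $D_{12}$ inside $\mathrm{Aut}(P(8,3)) = GL(2,3) \rtimes \mathbb{Z}_2$ that we can realize geometrically. Since $GL(2,3)$ has no element of order $12$, any order-$12$ element must involve the outer $\mathbb{Z}_2$. A short calculation with the given generators shows that $\tau = \mu^{-1}\gamma\beta$ decomposes as the $12$-cycle $(u_1\,u_8\,v_8\,v_3\,v_6\,u_6\,u_5\,u_4\,v_4\,v_7\,v_2\,u_2)$ together with the $4$-cycle $(u_3\,v_1\,u_7\,v_5)$, and hence has order $12$; one then exhibits an involution $\sigma$ satisfying $\sigma\tau\sigma=\tau^{-1}$ (guaranteed because $D_{12}\leq \mathrm{Aut}(P(8,3))$), so that $D_{12} = \langle \tau, \sigma\rangle$.

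Next I would realize $D_{12}$ as a subgroup of $\mathrm{SO}(4)$ and use it to embed $P(8,3)$. The natural model takes an order-$12$ isometry $T$ of $S^3$ whose fourth power has fixed geodesic circle $C$ and which restricts to a quarter-turn of $C$ --- for instance $T(z,w) = (e^{\pi i/6} z,\, i w)$ on $S^3 \subset \mathbb{C}^2$ with $C=\{z=0\}$. Place $u_3, u_7, v_1, v_5$ on $C$ at the fourth roots of unity in the cyclic order of the $4$-cycle of $\tau$, and place the other twelve vertices as a single free $\langle T\rangle$-orbit on a torus around $C$ in the cyclic order of the $12$-cycle. Complex conjugation $(z,w) \mapsto (\bar z, \bar w)$ provides an orientation-preserving involution $S\in\mathrm{SO}(4)$ with $STS = T^{-1}$ that will induce $\sigma$ once the starting angles of the torus orbit are chosen appropriately. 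The Edge Embedding Lemma (Lemma~\ref{edgeembeddinglemma}) then extends this vertex placement to a $D_{12}$-invariant embedding $\Lambda$ of $P(8,3)$, subject to a check of its adjacency hypotheses.

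Finally, attaching a chiral invertible knot to one carefully chosen $D_{12}$-orbit of edges of $\Lambda$ produces an embedding $\Gamma$ with $\mathrm{TSG}_+(\Gamma) = D_{12}$ --- the only possible enlargement being to $\mathrm{Aut}(P(8,3))$ by maximality of $D_{12}$, which the knot pattern precludes. Because edges within the free $12$-orbit have endpoints not both fixed by any non-trivial element of $D_{12}$, the Subgroup Theorem then yields positive realizability for every subgroup of $D_{12}$. The main obstacle will be step two: simultaneously choosing placement parameters so that $S$ genuinely induces $\sigma$ on the vertex set, and verifying the combinatorial hypotheses of the Edge Embedding Lemma, particularly for edges joining axis vertices to non-axis vertices.
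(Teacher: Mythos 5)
There is a genuine gap at the step where you invoke the Edge Embedding Lemma, and it cannot be repaired by tuning placement parameters. Granting your computation, the free $12$-orbit of $\tau$, read in its cyclic order $u_1,u_8,v_8,v_3,v_6,u_6,u_5,u_4,v_4,v_7,v_2,u_2$, is not merely a vertex orbit: consecutive members are adjacent in $P(8,3)$ (outer edges, spokes, and inner edges in alternating blocks), so the orbit is a $12$-cycle of the graph. Since $\sigma\tau\sigma=\tau^{-1}$ and $\tau$ acts on this $12$-gon as a full rotation, $D_{12}$ acts on it as the complete dihedral symmetry group of a $12$-gon; hence six of the twelve reflections $\tau^j\sigma$ are of ``edge type'' and each interchanges six pairs of consecutive --- therefore adjacent --- vertices. (The same happens for the paper's generators: the orbit of $\beta_1$ is the graph $12$-cycle $u_1u_2u_3v_3v_8v_5u_5u_6u_7v_7v_4v_1$, $\beta$ is a vertex-type reflection of it, and the edge-type reflection $\beta_1\beta$ interchanges adjacent inner vertices.) This violates hypothesis (2) of Lemma~\ref{edgeembeddinglemma}, which forbids any element of $G$ from interchanging adjacent vertices. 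That hypothesis is purely combinatorial, depending only on the abstract $D_{12}$-action on the vertex set and not on where the vertices are embedded, so no choice of starting angles can save the construction: the Edge Embedding Lemma does not apply to any copy of $D_{12}$ in $\mathrm{Aut}(P(8,3))$.

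The proposition is nonetheless true because the lemma gives only sufficient conditions. The paper sidesteps the issue by never building the invariant embedding from scratch: it starts from the embedding $\Gamma$ of Section~6 that already has $\mathrm{TSG}_+(\Gamma)=\mathrm{Aut}(P(8,3))$, notes that the $4$-element orbit $U=\{u_4,v_6,u_8,v_2\}$ of the order-$12$ element is invariant under $D_{12}$ but not under $\mathrm{Aut}(P(8,3))$, adds identical trefoils to the edges meeting $U$, and concludes by maximality of $D_{12}$ in $GL(2,3)\rtimes\mathbb{Z}_2$. Your remaining steps are essentially sound and parallel to the paper's: knotting a $D_{12}$-invariant but not $\mathrm{Aut}$-invariant family of edges to exclude the unique overgroup, and then applying Theorem~\ref{SubgroupTheorem} to an edge of the $12$-cycle (no non-trivial element of $D_{12}$ fixes two consecutive vertices of the $12$-gon, since the action on it is faithful and dihedral; the paper instead rules out a pointwise-fixed triad via Smith Theory). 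The fix is simply to replace your Edge Embedding Lemma construction with the symmetric embedding $\Gamma$ already in hand.
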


\begin{proof} Let $\beta= (u_1u_7)(u_2u_6)(u_3u_5)(v_1v_7)(v_2v_6)(v_3v_5)$   
\[ \beta_1 = (u_1u_2u_3v_3v_8v_5u_5u_6u_7v_7v_4v_1)(u_4v_6u_8v_2). \]

Then $D_{12} = \langle \beta_1,\beta \rangle$ leaves the set of vertices $U=\{u_4,v_6,u_8,v_2 \}$ invariant, but $\mathrm{Aut}(P(8,3))$ does not leave $U$ invariant.  We create an embedding $\Gamma_{12}$ by adding identical $3_1$ knots to the edges of $\Gamma$ that include a vertex in $U$.  Then $\langle \beta_1 ,\beta \rangle\leq \mathrm{TSG}_+(\Gamma_{12})$.  Since $D_{12}$ is a maximal subgroup of $GL(2,3) \rtimes \mathbb{Z}_2$, $\mathrm{TSG}_+( \Gamma_{12}) =D_{12}$.    

Suppose that the edge $\overline{v_5u_5}$ of $\Gamma_{12}$ is pointwise fixed by an orientation preserving homeomorphism $h$ of $(S^3,\Gamma_{12})$.  Since $\overline{v_5v_2}$ is knotted and $\overline{v_5v_8}$ is not, both of these edges must also be pointwise fixed.  Now by Theorem~\ref{Rigid}, there is an embedding $\Gamma_{12}'$ of $P(8,3)$ and a finite order orientation preserving homeomorphism $f$ of $(S^3,\Gamma_{12}')$ inducing the same automorphism of $P(8,3)$ as $h$.  But by Smith Theory~\cite{SMI}, a triad cannot be pointwise fixed by $f$ unless $f$ is the identity.  Thus $\overline{v_5u_5}$ is not pointwise fixed by any non-trivial element of $\mathrm{TSG}_+( \Gamma_{12})$.  Applying Theorem \ref{SubgroupTheorem}, we conclude that all of the subgroups of $D_{12}$ are positively realizable for $P(8,3)$.  \end{proof}

The result below follows from Theorem~\ref{k2=1}.

\begin{proposition}
$D_8\rtimes \mathbb{Z}_2$ and all of its subgroups are positively realizable and hence realizable for $P(8,3)$.
\end{proposition}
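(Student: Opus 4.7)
The plan is to observe that this is an immediate consequence of Theorem~\ref{k2=1}. Concretely, for the pair $(n,k) = (8,3)$, we have $k^2 = 9 \equiv 1 \pmod{8}$, so the congruence hypothesis of Theorem~\ref{k2=1} is satisfied. Applying that theorem with $n=8$ and $k=3$ yields that $B(8,3) = D_8 \rtimes \mathbb{Z}_2$ and all of its subgroups are positively realizable for $P(8,3)$, and the ``hence realizable'' part follows as observed in the introduction (adding a chiral invertible knot to every edge of a positively realizing embedding upgrades positive realizability to realizability).

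Since the theorem being invoked is doing all the work, there is no genuine obstacle: the only thing to verify is the arithmetic $3^2 \equiv 1 \pmod 8$ and the matching of $B(n,k)$ with $D_n\rtimes \mathbb{Z}_2$ in this case, which is precisely the second branch of the Frucht--Graver--Watkins formula for $B(n,k)$ that was quoted at the start of the paper. Thus the proof consists of a single sentence citing Theorem~\ref{k2=1} with the observation that $(8,3)$ falls under its hypotheses.
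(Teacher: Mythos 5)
Your proposal is correct and matches the paper exactly: the paper's entire justification for this proposition is the single remark that ``the result below follows from Theorem~\ref{k2=1},'' which is precisely your observation that $3^2 \equiv 1 \pmod 8$ places $(8,3)$ under that theorem's hypothesis with $B(8,3) = D_8 \rtimes \mathbb{Z}_2$. Nothing further is needed.
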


\begin{proposition}
$GL(2,3)$ is positively realizable and hence realizable for $P(8,3)$.
\end{proposition}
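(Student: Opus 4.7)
The plan is to start with the embedding $\Gamma$ from the preceding proposition, where $\mathrm{TSG}_+(\Gamma) = \mathrm{Aut}(P(8,3)) = GL(2,3) \rtimes \mathbb{Z}_2$, and modify $\Gamma$ by a decoration that preserves exactly the index-$2$ subgroup $GL(2,3)$. Since $GL(2,3)$ is a maximal subgroup of $\mathrm{Aut}(P(8,3))$, the upper bound on $\mathrm{TSG}_+$ will follow immediately from maximality once the lower bound is in place.

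First I would pin down $GL(2,3)$ concretely as an index-$2$ subgroup $H \leq \langle\mu,\beta,\gamma\rangle$, using the software referenced in Section~\ref{introduction}. Its abstract type is distinguished from the other index-$2$ maximal subgroup $SL(2,3) \rtimes \mathbb{Z}_2$ by the presence of an element of order $8$: $GL(2,3)$ contains a cyclic subgroup of order $8$, whereas $SL(2,3) \rtimes \mathbb{Z}_2$ has no element of order exceeding $6$. Since $\gamma$ has order $8$, a natural candidate is $H = \langle\mu,\gamma\rangle$, and I would verify by direct computation that this has order $48$, is isomorphic to $GL(2,3)$, and excludes $\beta$.

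Next I would examine the orbits of $H$ on the forty edges of $P(8,3)$ and select a union of $H$-orbits $\mathcal{O}$ that is not setwise fixed by any element of $\mathrm{Aut}(P(8,3)) \setminus H$. Provided that the $H$-orbit partition of the edge set strictly refines the $\mathrm{Aut}(P(8,3))$-orbit partition --- which I would verify directly --- such $\mathcal{O}$ is obtained by taking any one $H$-orbit that is a proper subset of its enclosing $\mathrm{Aut}$-orbit. I would then form $\Gamma'$ from $\Gamma$ by tying an identical invertible knot, for instance $4_1$, into every edge of $\mathcal{O}$. The lower bound $H \leq \mathrm{TSG}_+(\Gamma')$ is clear, since each element of $H$ is realized on $\Gamma$ by a homeomorphism that carries $\mathcal{O}$ to itself and $4_1$ is invertible. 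For the upper bound, $\mathrm{TSG}_+(\Gamma') \leq \mathrm{TSG}_+(\Gamma) = \mathrm{Aut}(P(8,3))$, so by maximality of $H$ the only possibility for proper containment is $\mathrm{TSG}_+(\Gamma') = \mathrm{Aut}(P(8,3))$, which would force $\mathcal{O}$ to be $\mathrm{Aut}$-invariant, contradicting its choice.

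The main obstacle is the combined first step: correctly identifying the isomorphism type of the candidate subgroup $H$ and then exhibiting a union of $H$-orbits on edges that is not $\mathrm{Aut}$-invariant. Both tasks are finite but delicate, since the two index-$2$ maximal subgroups are abstractly distinct yet both of order $48$, and the edge set of $P(8,3)$ is not homogeneous under $\mathrm{Aut}(P(8,3))$ (the generator $\mu$ mixes inner, outer, and spoke edges), so the orbit structure requires explicit computation. Should $\beta$ happen to preserve every $H$-orbit of edges, a fallback is to build $\Gamma'$ directly from an embedding $GL(2,3) \hookrightarrow SO(4)$ provided by the Group Rigidity Theorem, applying the Edge Embedding Lemma to an appropriate orbit of sixteen points on $S^3$.
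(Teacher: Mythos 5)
Your overall strategy (decorate the full-symmetry embedding so that exactly a maximal subgroup survives, then invoke maximality for the upper bound) matches the paper's, but the specific mechanism fails, and the failure is exactly the case you flag as a worry at the end. The subgroup $GL(2,3)$ acts \emph{transitively} on the $24$ edges of $P(8,3)$ (not $40$): $\mathrm{Aut}(P(8,3))$ is edge-transitive, and the setwise stabilizer of a spoke such as $\overline{u_4v_4}$ contains the inner--outer swap $u_i\mapsto v_{3i}$, $v_i\mapsto u_{3i}$, which lies outside $GL(2,3)$; since $GL(2,3)$ has index $2$, this forces $GL(2,3)\cdot\mathrm{Stab}(e)=\mathrm{Aut}(P(8,3))$ and hence a single $GL(2,3)$-orbit of edges. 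So there is no proper nonempty union of $H$-orbits of edges, and no decoration by identical unoriented invertible knots on a subset of edges can separate $GL(2,3)$ from the full automorphism group. Your candidate $H=\langle\mu,\gamma\rangle$ is also not $GL(2,3)$: the paper's copy of $GL(2,3)$ is precisely the subgroup preserving each of the parity classes $A=\{u_i,v_j : i\text{ even},\ j\text{ odd}\}$ and $B=\{u_i,v_j : i\text{ odd},\ j\text{ even}\}$, and $\gamma$ interchanges $A$ and $B$; in fact $\langle\mu,\gamma\rangle$ lies in none of the four maximal subgroups and is therefore all of $\mathrm{Aut}(P(8,3))$. (The presence of an order-$8$ element in $GL(2,3)$ does not force every order-$8$ element of the ambient group into that subgroup.)

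The paper's fix is to exploit the ordered bipartition rather than an edge orbit: every edge of $P(8,3)$ joins $A$ to $B$, and one ties the non-invertible knot $8_{17}$ into every edge, oriented from its $A$-endpoint to its $B$-endpoint. Any homeomorphism preserving the resulting embedding must respect these orientations, hence preserve $(A,B)$ as an ordered pair, which excludes $\gamma$; maximality then finishes the argument exactly as you intended. Your fallback via the Group Rigidity Theorem and the Edge Embedding Lemma only produces an embedding with $GL(2,3)\leq\mathrm{TSG}_+$, which you already have from the full-symmetry embedding; it supplies no mechanism for the reverse inclusion, so as stated it does not close the gap.
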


\begin{proof} Let $A =\{ u_i, v_j \ | \ i \text { even, } j \text{ odd}\}$
and $B = \{ u_i, v_j \ | \ i \text { odd, } j \text{ even}\}$.
Starting with the embedding $\Gamma$ from Figure~\ref{Fig:AutP83}, we add the non-invertible knot $8_{17}$ to each edge between a vertex in $A$ and a vertex in $B$ oriented from $A$ to $B$.  This gives us an embedding $\Gamma_{GL}$.  Now $GL(2,3)=\langle \delta_1,\delta_2\rangle$ for
\begin{align*}
 \delta_1 &= (u_2v_1u_8)(u_3v_6v_8)(u_4u_6v_5)(u_7v_2v_4)  \\
\delta_2 &= (u_1u_3v_4v_2u_5u_7v_8v_6)(u_2u_4v_7v_5u_6u_8v_3v_1).
\end{align*}

Observe that $A$ and $B$ are each invariant under $\langle \delta_1,\delta_2\rangle$. Thus $\langle \delta_1,\delta_2\rangle \leq\mathrm{TSG}_+(\Gamma_{GL})$.  On the other hand, $\gamma =(u_1u_2u_3u_4u_5u_6u_7u_8)(v_1v_2v_3v_4v_5v_6v_7v_8)$ does not leave $A$ and $B$ invariant. Thus $\gamma$ cannot be induced by a homeomorphism of $(S^3, \Gamma_{GL})$, and hence $\gamma\not \in \mathrm{TSG}_+(\Gamma_{GL})$.  Since $\langle \delta_1,\delta_2 \rangle=GL(2,3)$ is a maximal subgroup of $\mathrm{Aut}(P(8,3))=GL(2,3) \rtimes \mathbb{Z}_2$, we have $\mathrm{TSG}_+(\Gamma_{GL}) = GL(2,3)$. \end{proof}

\begin{proposition}\label{SL2}
$SL(2,3) \rtimes \mathbb{Z}_2$ and all of its subgroups are positively realizable and hence realizable for $P(8,3)$.
\end{proposition}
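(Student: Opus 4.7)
The plan is to construct an embedding $\Gamma_{SL}$ of $P(8,3)$ with $\mathrm{TSG}_+(\Gamma_{SL}) = SL(2,3) \rtimes \mathbb{Z}_2$, and then apply Theorem \ref{SubgroupTheorem} to realize every subgroup positively. Because $SL(2,3)\rtimes\mathbb{Z}_2$ is maximal in $\mathrm{Aut}(P(8,3))$, it suffices to produce an embedding on which $SL(2,3)\rtimes\mathbb{Z}_2$ acts as orientation-preserving symmetries while obstructing a single element of the full automorphism group that lies outside $SL(2,3)\rtimes\mathbb{Z}_2$.

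I would start from the embedding $\Gamma$ of Figure \ref{Fig:AutP83}, which already realizes the entire $\mathrm{Aut}(P(8,3))$. The index-$2$ character $\chi$ on $\mathrm{Aut}(P(8,3))$ with kernel $SL(2,3)\rtimes\mathbb{Z}_2$ is the composition of the projection $GL(2,3)\rtimes\mathbb{Z}_2\to GL(2,3)$ with $\det:GL(2,3)\to\{\pm 1\}$. Evaluating $\chi$ on the generators $\mu,\beta,\gamma$ determines which lie in $SL(2,3)\rtimes\mathbb{Z}_2$: the element $\mu$ automatically does, since it has order $3$, and the remaining cases reduce to computing determinants in the explicit matrix representation of $\mathrm{Aut}(P(8,3))$ on $\mathbb{F}_3^2$. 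This computation picks out an orbit $\mathcal{O}$ of edges of $\Gamma$ under $SL(2,3)\rtimes\mathbb{Z}_2$ that is strictly smaller than its $\mathrm{Aut}(P(8,3))$-orbit, with the property that every element outside $SL(2,3)\rtimes\mathbb{Z}_2$ carries some edge of $\mathcal{O}$ to an edge not in $\mathcal{O}$.

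I would then place identical chiral invertible knots such as $3_1$ on each edge of $\mathcal{O}$ to obtain $\Gamma_{SL}$. The symmetries of $\Gamma$ realizing $SL(2,3)\rtimes\mathbb{Z}_2$ setwise preserve $\mathcal{O}$ and, since $3_1$ is invertible, extend to symmetries of $\Gamma_{SL}$, yielding $SL(2,3)\rtimes\mathbb{Z}_2 \leq \mathrm{TSG}_+(\Gamma_{SL})$. Any outside element would send a knotted edge to an unknotted one, which is impossible; maximality then forces equality. Since no element of $\mathrm{Aut}(P(8,3))$ fixes two consecutive outer vertices (the fixed sets of the order-$2$ elements consist of antipodal pairs and the higher-order elements have no fixed vertices), the edge $\overline{u_1 u_2}$ satisfies the hypothesis of Theorem \ref{SubgroupTheorem}, and every subgroup of $SL(2,3)\rtimes\mathbb{Z}_2$ is positively realizable.

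The main obstacle is the explicit combinatorial computation: realizing $\beta$ and $\gamma$ as matrices in $GL(2,3)\rtimes\mathbb{Z}_2$, determining the edge orbits of $SL(2,3)\rtimes\mathbb{Z}_2$ on $P(8,3)$, and verifying that the chosen orbit $\mathcal{O}$ is genuinely setwise preserved by the ambient isometries of $\Gamma$ pictured in Figure \ref{Fig:AutP83}. The choice of knot is also constrained: it must be chiral to distinguish the decorated edges from their complement, yet invertible so that any involution in $SL(2,3)\rtimes\mathbb{Z}_2$ which reverses an edge of $\mathcal{O}$ continues to extend as a homeomorphism of the decorated embedding.
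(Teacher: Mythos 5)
Your construction hinges on the existence of an orbit $\mathcal{O}$ of edges under $SL(2,3)\rtimes\mathbb{Z}_2$ that is a proper subset of the edge set, and no such orbit exists: $SL(2,3)\rtimes\mathbb{Z}_2$ acts transitively on the $24$ edges of $P(8,3)$. Indeed, $|SL(2,3)\rtimes\mathbb{Z}_2|=48$ and the group is transitive on the $16$ vertices, so each vertex stabilizer has order $3$; the paper's Sage computation (quoted in its proof) shows the only non-trivial elements fixing a vertex have order $3$ and fix no edge, so the stabilizer of a vertex cyclically permutes the three incident edges. Vertex-transitivity together with this local transitivity gives edge-transitivity (in fact arc-transitivity) of the subgroup. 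Consequently there is no proper, non-empty, $SL(2,3)\rtimes\mathbb{Z}_2$-invariant set of edges on which to place your $3_1$ knots, and the whole "knot an invariant edge orbit'' strategy collapses at its first step. The determinant-character bookkeeping is fine as group theory, but it identifies the subgroup abstractly; it does not produce a combinatorial invariant (an edge subset) that separates the subgroup from the full automorphism group, because none exists at the level of edges or arcs.

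This is exactly why the paper uses a finer invariant: it decorates a neighborhood of each vertex so that the three edges meeting there acquire a cyclic order (each edge "knotted around'' the next), propagated equivariantly to all vertices using the isometric $SL(2,3)\rtimes\mathbb{Z}_2$-action furnished by Theorem~\ref{GroupRigid}. The subgroup preserves these cyclic orders (its vertex stabilizers rotate the edge triples), while $\beta$ fixes $u_4$ and transposes two of its neighbors, reversing the cyclic order at $u_4$; maximality then pins down $\mathrm{TSG}_+$. If you want to salvage your write-up, you need to replace the edge-orbit decoration with some such local (vertex-based) chirality; no decoration that is constant on edges, or even on oriented edges, can work here. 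Your final appeal to Theorem~\ref{SubgroupTheorem} would be fine once a correct construction is in place, since no non-trivial element of the subgroup fixes two adjacent vertices, but as stated your justification (about elements of the full automorphism group) is both stronger than needed and not quite what the theorem asks for.
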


\begin{proof} We start with the embedding $\Gamma$ of $P(8,3)$ illustrated in Figure~\ref{Fig:AutP83}, which has $\mathrm{TSG}_+(\Gamma)=\mathrm{Aut}(P(8,3))=GL(2,3) \rtimes \mathbb{Z}_2$.  Now by Theorem~\ref{GroupRigid}, there is an embedding $\Gamma'$ such that
$\mathrm{Aut}(P(8,3))\leq \mathrm{TSG}_+(\Gamma')\leq\mathrm{Aut}(P(8,3))$ which is induced by an isomorphic group $G$ of orientation preserving isometries of $(S^3,\Gamma')$.

Now we modify $\Gamma$ in a neighborhood of the vertex $u_1$ so that it looks like the neighborhood $N(u_1)$ illustrated on the left in Figure~\ref{Fig:orient}.  We say that an edge $e_1$ is {\it knotted around} an edge $e_2$, if there is a ball whose boundary intersects the graph in four points giving us the tangle on the right in Figure~\ref{Fig:orient}, where the $3_1$ knot in one string can be replaced by any non-trivial knot which is linked with the other string.  Thus after changing $\Gamma$ in the neighborhood $N(u_1)$, we see that $\overline{u_1u_2}$ is knotted around $\overline{u_1u_8}$ which is knotted around $\overline{u_1v_1}$ which is knotted around $\overline{u_1u_2}$.  Since none of the reverse knottings hold, this gives an \emph{order to the edges} around $u_1$.    

\begin{figure}[h!]
\begin{center}\centering\includegraphics[scale=.2]{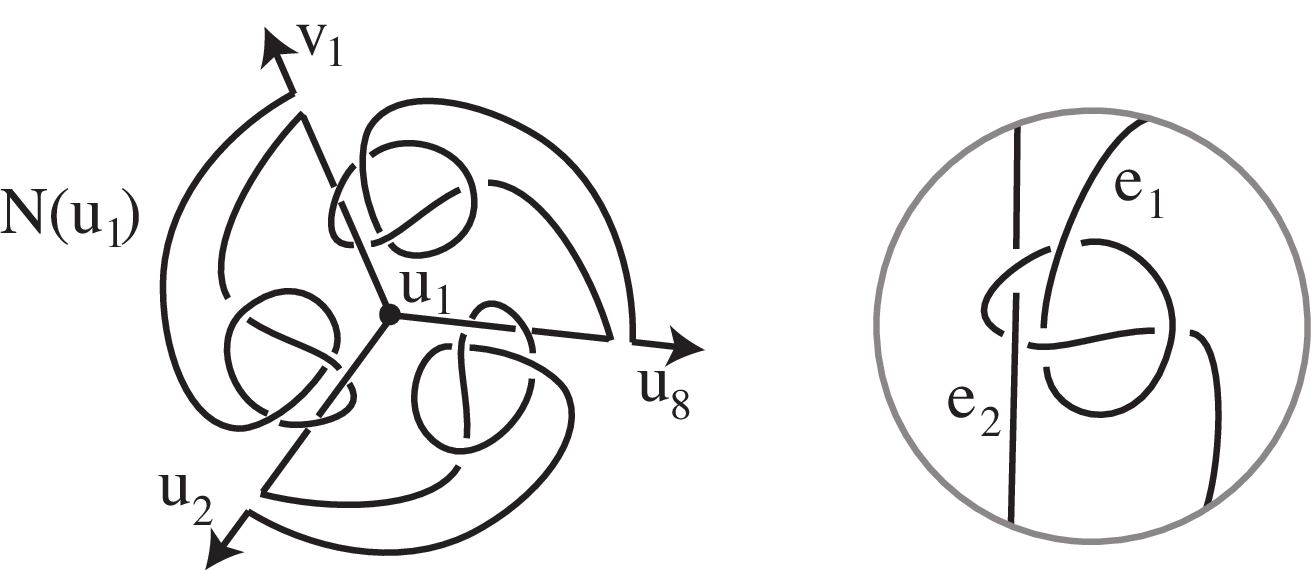}
\caption{ We embed the edges near $u_1$ as on the left.  We say $e_1$ is \emph{knotted around} $e_2$ if there is a tangle as on the right.  }
\label{Fig:orient}
\end{center}
\end{figure}

Observe that $SL(2,3) \rtimes \mathbb{Z}_2=\langle \rho_1,\rho_2,\rho_3 \rangle $ where 
\begin{align*}
\rho_1 &= (u_2v_1u_8)(u_3v_6v_8)(u_4u_6v_5)(u_7v_2v_4) \\
\rho_2 &= (u_1v_3u_5v_7)(u_2v_8u_6v_4)(u_3v_5u_7v_1)(u_4v_2u_8v_6)\\
\rho_3  &= (u_1u_7v_8)(u_2v_7v_5)(u_3v_4u_5)(u_6v_3v_1).
\end{align*}

 Since $\langle \rho_1,\rho_2,\rho_3 \rangle $ acts transitively on the vertices, we can apply the isometry group $G $ to the neighborhood $N(u_1)$ to modify a neighborhood around every vertex of $\Gamma'$.  To check that this is well-defined, we used Sage to determine that the only non-trivial automorphisms in $\langle \rho_1,\rho_2,\rho_3 \rangle $ which fix a vertex are of order $3$.  Hence no edge is pointwise fixed by any non-trivial element of $\langle \rho_1,\rho_2,\rho_3 \rangle $.  It follows that no automorphism in $\langle \rho_1,\rho_2,\rho_3 \rangle $ changes the order of the edges sharing a vertex.   Thus our modification of the neighborhoods around vertices is well-defined, and hence we have a well-defined embedding $\Gamma_{SL2}$ of $P(8,3)$ such that the edges around every vertex have an order.  Hence $SL(2,3) \rtimes \mathbb{Z}_2=\langle \rho_1,\rho_2,\rho_3 \rangle\leq\mathrm{TSG}_+(\Gamma_{SL2})$.

 Now $\beta= (u_1u_7)(u_2u_6)(u_3u_5)(v_1v_7)(v_2v_6)(v_3v_5)\in \mathrm{Aut}(P(8,3)=GL(2,3) \rtimes \mathbb{Z}_2$ does not preserve the order of edges around $u_4$ since $\beta$ fixes $\overline{u_4u_5}$ and interchanges $u_3$ and $u_5$.  Thus $\mathrm{TSG}_+(\Gamma_{SL2})\not=\mathrm{Aut}(P(8,3))$.  Since $SL(2,3) \rtimes \mathbb{Z}_2$ is a maximal subgroup of $\mathrm{Aut}(P(8,3))$, it follows that $\langle \rho_1,\rho_2,\rho_3 \rangle=\mathrm{TSG}_+(\Gamma_{SL2})$.  Now since no edge of $P(8,3)$ is pointwise fixed by any non-trivial element of $\langle \rho_1,\rho_2,\rho_3 \rangle$, we can apply Theorem~\ref{SubgroupTheorem} to conclude that all of the subgroups of $SL(2,3) \rtimes \mathbb{Z}_2$ are also positively realizable. \end{proof}

In summary, we have proved the following.

\begin{theorem}
$\mathrm{Aut}(P(8,3))= GL(2,3) \rtimes \mathbb{Z}_2$ and all of its subgroups are positively realizable and hence realizable for $P(8,3)$.
\end{theorem}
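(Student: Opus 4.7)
The plan is simply to assemble the propositions already established throughout Section~\ref{P(8,3)}. The embedding $\Gamma$ of Figure~\ref{Fig:AutP83} positively realizes the full group $\mathrm{Aut}(P(8,3))=GL(2,3)\rtimes\mathbb{Z}_2$, and the trivial group is positively realizable for any generalized Petersen graph as explained in the introduction. Hence what remains is to cover every proper non-trivial subgroup.

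As stated explicitly in the text, every proper subgroup of $GL(2,3)\rtimes\mathbb{Z}_2$ is contained in one of the four maximal subgroups $D_{12}$, $D_8\rtimes\mathbb{Z}_2$, $GL(2,3)$, or $SL(2,3)\rtimes\mathbb{Z}_2$. For the first, second, and fourth of these, the corresponding proposition in this section already establishes positive realizability of the maximal subgroup together with all of its subgroups (the subgroup-closure in each case comes from an application of Theorem~\ref{SubgroupTheorem} after verifying the no-pointwise-fixed-edge hypothesis). For $GL(2,3)$ we showed positive realizability only of the full maximal subgroup, but each of its proper subgroups is contained in one of the other three maximal subgroups and so is already handled; this containment can be read off the subgroup lattice of $GL(2,3)\rtimes\mathbb{Z}_2$ computed via Sage, as done elsewhere in the paper. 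Finally, positive realizability implies realizability by the knot-addition trick recalled in the introduction (add a chiral invertible knot to every edge of the positively realizing embedding).

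Thus the theorem will follow by concatenating the earlier propositions together with the subgroup-lattice bookkeeping. The only point requiring attention is this combinatorial check that the four maximal subgroups together with the full group and the trivial group exhaust the subgroup lattice; this is not a topological obstacle but a purely group-theoretic verification, and so I anticipate no real difficulty in completing the proof.
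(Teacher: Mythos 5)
Your proposal is correct and takes essentially the same approach as the paper: the theorem appears there only as a summary (``In summary, we have proved the following'') of the section's propositions, assembled exactly as you describe, with positive realizability of the trivial group and the passage from positive realizability to realizability handled by the knot tricks from the introduction. Your explicit remark that the proper subgroups of $GL(2,3)$ must be re-routed through the other three maximal subgroups $D_{12}$, $D_8\rtimes\mathbb{Z}_2$, and $SL(2,3)\rtimes\mathbb{Z}_2$ (since the paper realizes only the full group $GL(2,3)$ and not its subgroups) addresses a bookkeeping step the paper leaves implicit, and the Sage subgroup-lattice check you propose is the right way to close it.
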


\section{The exceptional case $P(10,2)$}\label{P(10,2)}

Frucht proved that $\mathrm{Aut}(P(10,2)) = A_5 \times \mathbb{Z}_2$ \cite{FRU}.  The embedding $\Gamma$ of $P(10,2)$ as the 1-skeleton of a regular dodecahedron (see the left image of \ref{Fig:dodec}) will be the basis for all of the embeddings in this section.  The group $G=A_5$ of rotations of a solid dodecahedron consists of six order $5$ rotations about an axis through the centers of opposite faces, ten order $3$ rotations about an axis through opposite vertices, and 15 order $2$ rotations about an axis through midpoints of opposite edges.  

\begin{figure}[h!]
\begin{center}\centering\includegraphics[scale=.24]{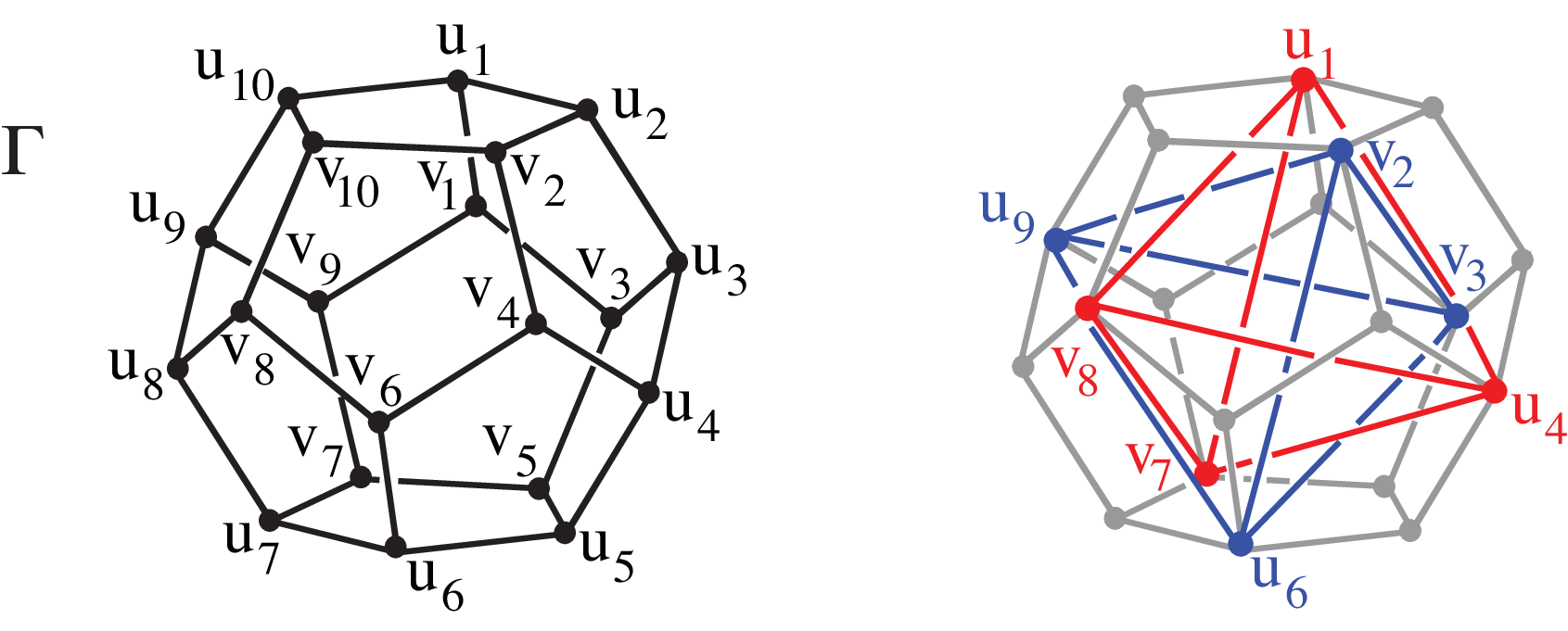}
\caption{On the left, $\Gamma$ is an embedding of $P(10,2)$ with $\mathrm{TSG}_+(\Gamma) = A_5 \times \mathbb{Z}_2$.  On the right, red and blue inscribed tetrahedra which will be used in the proof of Proposition~\ref{A4xZ2realizable}. }
\label{Fig:dodec}
\end{center}
\end{figure}

The automorphism $\alpha=(v_2v_8)(u_2u_8)(v_{6}v_4)(u_{6}u_4)(v_3v_7)(u_3u_7)(v_1v_9)(u_1u_9)$ is not induced by a rotation of the solid dodecahedron, and hence is not in $G$.  However, it is induced by a rotation $h$ of $S^3$ around the equator of $\Gamma$ containing $\overline{u_{10}v_{10}}$ and $\overline{u_5v_5}$ which interchanges the inside and outside of the dodecahedron.  Thus $H= \langle G, h\rangle=A_5 \times \mathbb{Z}_2$ is a group of rotations of $(S^3, \Gamma)$ which induces $\mathrm{Aut}(P(10,2)) = A_5 \times \mathbb{Z}_2$.  Hence we have proved the following.

\begin{proposition}\label{A5xZ2posrealizable}
	$\Aut(P(10,2)) = A_5 \times \mathbb{Z}_2$ is positively realizable and hence realizable for $P(10,2)$.
\end{proposition}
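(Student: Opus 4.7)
The plan is to exhibit a single embedding $\Gamma$ of $P(10,2)$ whose orientation preserving topological symmetry group is all of $\mathrm{Aut}(P(10,2))$, and then observe that this embedding can be realized by genuine isometries of $S^3$. The natural candidate, as suggested by the dodecahedral structure of $P(10,2)$, is to embed the graph as the $1$-skeleton of a regular dodecahedron inscribed in $S^3$. I would verify from the edge incidence that a dodecahedron really does realize $P(10,2)$ with the outer $10$-cycle $u_1\cdots u_{10}$, spokes $\overline{u_iv_i}$, and the skip-$2$ inner cycle $v_1v_3v_5\cdots v_9v_1,\ v_2v_4\cdots v_{10}v_2$ matching the definition of the generalized Petersen graph.

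Next, I would account for the obvious symmetries. The rotation group of a solid regular dodecahedron, sitting inside $\mathrm{SO}(3)\leq \mathrm{SO}(4)$, is isomorphic to $A_5$, and acts faithfully on $\Gamma$ since no non-trivial rotation fixes every vertex of the dodecahedron. This gives a subgroup $G\cong A_5$ of $\mathrm{TSG}_+(\Gamma)$. The $\mathbb{Z}_2$ factor must come from somewhere that is \emph{not} a rotation of the solid dodecahedron: I would use an order $2$ rotation $h$ of $S^3$ whose axis is the round circle through an antipodal pair of dodecahedron edges (such as $\overline{u_5v_5}$ and $\overline{u_{10}v_{10}}$), so that $h$ turns the dodecahedron inside out while mapping $\Gamma$ to itself. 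This $h$ commutes with every rotation of the dodecahedron because it is central in the isometry group that preserves the dodecahedron setwise, so $\langle G,h\rangle\cong A_5\times\mathbb{Z}_2$.

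To close the argument, I would check that the induced automorphism of $h$ is the particular involution $\alpha=(v_1v_9)(u_1u_9)(v_2v_8)(u_2u_8)(v_3v_7)(u_3u_7)(v_4v_6)(u_4u_6)$, verify $\alpha\notin G$ by noting that no rotation of the solid dodecahedron pointwise fixes $\overline{u_5v_5}$ and $\overline{u_{10}v_{10}}$ while swapping the two spoke endpoints on every other diameter, and conclude $|\langle G,h\rangle|=120=|\mathrm{Aut}(P(10,2))|$, so $\mathrm{TSG}_+(\Gamma)=\mathrm{Aut}(P(10,2))$. Realizability then follows automatically from positive realizability.

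The main obstacle is really bookkeeping rather than topology: matching the combinatorial labelling $\{u_i,v_i\}$ against the geometric dodecahedron so that the rotation subgroup is literally the image of $A_5$ in $\mathrm{Aut}(P(10,2))$ (rather than some conjugate by a graph automorphism that accidentally permutes labels incorrectly), and checking that $h$ genuinely preserves the dodecahedral $1$-skeleton as a subset of $S^3$ rather than just preserving the combinatorial graph. Once the correct axis of $h$ is identified as a geodesic circle in $S^3$ passing through the two chosen edge midpoints, the verification reduces to noting that both the $A_5$-rotations and $h$ lie in $\mathrm{SO}(4)$, which immediately gives an orientation preserving action.
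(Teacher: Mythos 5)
Your proposal is correct and follows essentially the same route as the paper: embed $P(10,2)$ as the $1$-skeleton of a regular dodecahedron, take the rotation group $A_5$ of the solid dodecahedron, and adjoin the order $2$ rotation of $S^3$ about the circle through the antipodal spokes $\overline{u_5v_5}$ and $\overline{u_{10}v_{10}}$, which turns the dodecahedron inside out and induces the central involution, yielding $A_5\times\mathbb{Z}_2\leq \mathrm{TSG}_+(\Gamma)$ and hence equality since this is all of $\mathrm{Aut}(P(10,2))$.
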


Note that by Theorem~\ref{Thm:Gen_Dn}, we have the following.

\begin{proposition}\label{subgroupsofA5xz2}
	$D_{10}$ and all of its subgroups are positively realizable and hence realizable for $P(10,2)$.
\end{proposition}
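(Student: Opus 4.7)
The proposition is an immediate specialization of Theorem~\ref{Thm:Gen_Dn}, so the plan is simply to invoke that theorem with $n = 10$ and $k = 2$. Theorem~\ref{Thm:Gen_Dn} asserts that for \emph{every} pair $(n,k)$ (with no exception for the special pairs like $(10,2)$, since the argument given there constructs an embedding directly from the cyclic and flip symmetries of an $n$-gon with attached spokes), the dihedral group $D_n$ together with all of its subgroups is positively realizable for $P(n,k)$. Taking $(n,k) = (10,2)$ yields that $D_{10}$ and all of its subgroups are positively realizable, and hence realizable, for $P(10,2)$.

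Concretely, the plan of the argument hiding behind this one-line citation is the one already carried out in the proof of Theorem~\ref{Thm:Gen_Dn}: take the standard planar embedding in which $u_1,\dots,u_{10}$ and $v_1,\dots,v_{10}$ sit on concentric circles $C$ and $c$, the spokes are radial segments, and the inner edges $\overline{v_iv_{i+2}}$ are drawn with alternating crossings so that the whole configuration is invariant under the order-$10$ rotation $f$ and the flip $g$ that generate $D_{10}$; then attach a copy of the invertible knot $4_1$ to each outer edge to force the outer cycle $L$ to be preserved, which pins $\mathrm{TSG}_+$ down to exactly $D_{10}$. Since no non-trivial element of $D_{10}$ pointwise fixes an outer edge, Theorem~\ref{SubgroupTheorem} delivers every subgroup as well.

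There is no real obstacle here; the only thing to notice is that the proof of Theorem~\ref{Thm:Gen_Dn} makes no use of whether $(n,k)$ is exceptional, so it applies verbatim to $(10,2)$. Consequently the proof of Proposition~\ref{subgroupsofA5xz2} reduces to the single sentence: ``apply Theorem~\ref{Thm:Gen_Dn} with $(n,k)=(10,2)$.''
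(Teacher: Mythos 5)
Your proposal is correct and matches the paper exactly: the paper also derives this proposition as an immediate consequence of Theorem~\ref{Thm:Gen_Dn} applied to $(n,k)=(10,2)$, with no additional argument. The recap of that theorem's construction is accurate but not needed.
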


\begin{proposition}\label{A4xZ2realizable}
	$A_4 \times \mathbb{Z}_2$ and all of its subgroups are positively realizable and hence realizable for $P(10,2)$.
\end{proposition}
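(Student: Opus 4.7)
The plan is to break the $A_5\times\mathbb{Z}_2$-symmetry of the dodecahedral embedding $\Gamma$ by singling out one of the five inscribed cubes — that is, one pair of red and blue tetrahedra as in Figure~\ref{Fig:dodec} — via a rigid knot decoration, leaving only the stabilizer of that cube. First, $A_5$ acts transitively on the five inscribed cubes with point-stabilizer $A_4$, and because $A_4$ is maximal (hence self-normalizing) in the simple group $A_5$, the only $A_5$-equivariant self-map of $A_5/A_4$ is the identity, so the central involution $z$ of $A_5\times\mathbb{Z}_2$ must act trivially on the set of cubes. Consequently the stabilizer in $A_5\times\mathbb{Z}_2$ of any chosen inscribed cube $C$ equals $A_4\times\mathbb{Z}_2$.

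Fix such a $C$. Since the edges of $C$ are face-diagonals of pentagonal faces of the dodecahedron, no two of the eight vertices of $C$ are adjacent in $P(10,2)$, so the set $E_C$ of edges of $\Gamma$ with exactly one endpoint in $C$ has size $24$. Because $A_4$ acts regularly on the $12$ non-cube vertices (its order-$3$ elements fix only cube vertices and its order-$2$ elements fix no vertex of the dodecahedron), every non-cube vertex is incident to exactly $2$ edges of $E_C$. I form $\Gamma^*$ by tying an identical right-handed trefoil $3_1$ into each edge of $E_C$, placed $A_4\times\mathbb{Z}_2$-equivariantly; then immediately $A_4\times\mathbb{Z}_2\leq\mathrm{TSG}_+(\Gamma^*)$. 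Conversely, any $\phi\in\mathrm{TSG}_+(\Gamma^*)\leq\mathrm{Aut}(P(10,2))=A_5\times\mathbb{Z}_2$ is induced by an orientation-preserving homeomorphism which must permute the $24$ chiral $3_1$ knots, so $\phi(E_C)=E_C$; since cube vertices are precisely the vertices incident to three edges of $E_C$, $\phi$ preserves $C$ and hence lies in the stabilizer $A_4\times\mathbb{Z}_2$. This gives $\mathrm{TSG}_+(\Gamma^*)=A_4\times\mathbb{Z}_2$.

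To invoke the Subgroup Theorem (Theorem~\ref{SubgroupTheorem}) and conclude positive realizability of every subgroup, I need an edge of $\Gamma^*$ whose two endpoints are not jointly fixed by any non-trivial element of $A_4\times\mathbb{Z}_2$. By centrality of $z$ the set of vertices it fixes is $A_5$-invariant and hence empty by vertex-transitivity of $A_5$; no non-identity element of $A_4$ fixes both endpoints of any edge (order-$3$ rotations fix only antipodal, non-adjacent cube vertices, and order-$2$ elements of $A_4$ fix no vertex at all); and a brief case check on the elements $gz$ with $g\in A_4\setminus\{e\}$, using Smith Theory to confine their fixed sets to circles, rules out at most a handful of ``bad'' edges. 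Any remaining edge of $\Gamma^*$ satisfies the hypothesis of the Subgroup Theorem. The main obstacle is the reverse inclusion $\mathrm{TSG}_+(\Gamma^*)\subseteq A_4\times\mathbb{Z}_2$: one must ensure the decoration singles out $C$ among the five inscribed cubes, which is guaranteed by the chirality of $3_1$ together with the combinatorial identification of cube vertices as the vertices of valence $3$ in the knotted subgraph.
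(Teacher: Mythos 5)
Your construction is essentially the paper's: the paper knots the $24$ edges meeting the vertex set $T_1\cup T_2$ of the two dual inscribed tetrahedra (your inscribed cube $C$) in the dodecahedral embedding, identifies $\mathrm{TSG}_+$ with the stabilizer $A_4\times\mathbb{Z}_2$ of that set, and then applies the Subgroup Theorem after checking that a suitable edge (the paper uses $\overline{v_2v_4}$, exploiting that exactly one of the edges at $v_4$ is knotted) is not pointwise fixed. Your minor variations --- trefoils in place of $4_1$ knots, computing the stabilizer via the action on the five cubes rather than via maximality of $A_4\times\mathbb{Z}_2$ in $A_5\times\mathbb{Z}_2$ --- do not change the argument, which is correct.
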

\begin{proof} Let $T_1=\{u_{1},u_{4},v_{7},v_{8}\}$ and $T_2=\{u_6, u_9, v_2,v_3\}$ be the red and blue sets of vertices respectively illustrated on the right in Figure~\ref{Fig:dodec}.  The vertices in each $T_i$ are equidistant from each other and form the corners of a solid tetrahedron inscribed in the solid dodecahedron.  Let $G'$ be the subgroup of the group $G$ of rotations of the solid dodecahedron which takes each of these solid tetrahedron to itself inducing the group of rotations $A_4$ on both tetrahedra.   The rotation $h$ of $S^3$ around the equator of $\Gamma$ containing $\overline{u_{10}v_{10}}$ and $\overline{u_5v_5}$ interchanges the vertices in $T_1$ and $T_2$.  Thus $H'=\langle G',h\rangle$ is a group of rotations of $(S^3, \Gamma)$ which induces $A_4 \times \mathbb{Z}_2$ on $\Gamma$. 

Now we modify the embedding $\Gamma$ by adding the $4_1$ knot to each edge which has a vertex in $T_1\cup T_2$ to get an embedding $\Gamma_{A42}$ which is invariant under $H'$.  Thus $A_4 \times \mathbb{Z}_2\leq \mathrm{TSG}_+(\Gamma_{A42})$, and every homeomorphism of $(S^3, \Gamma_{A42})$ leaves $T_1\cup T_2$ setwise invariant.  Now observe that no automorphism of order $5$ of $P(10,2)$ leaves $T_1\cup T_2$ setwise invariant.  Thus no order $5$ automorphism is in $\mathrm{TSG}_+(\Gamma_{A42})$.  Since $A_4 \times \mathbb{Z}_2$ is a maximal subgroup of $A_5 \times \mathbb{Z}_2$, we have $A_4 \times \mathbb{Z}_2= \mathrm{TSG}_+(\Gamma_{A42})$ induced by $H'$.  

Finally, suppose that the edge $\overline{v_2v_4}$ is pointwise fixed by a non-trivial element of $\mathrm{TSG}_+(\Gamma_{A42})$.   Then there is an element $f\in H'$ which pointwise fixes $\overline{v_2v_4}$.  Since $\overline{v_4u_4}$ has a knot in it and $\overline{v_4v_6}$ does not, both of these edges must also be pointwise fixed by $f$.  Since all the elements of $H'$ are rotations, this means that $f$ must be trivial. Thus $\overline{v_2v_4}$ is not pointwise fixed by any non-trivial element of $\mathrm{TSG}_+(\Gamma_{A42})$. Hence we can apply Theorem~\ref{SubgroupTheorem} to conclude that all of the subgroups of $A_4 \times \mathbb{Z}_2$ are positively realizable for $P(10,2)$.\end{proof}

\begin{proposition}\label{propersubgroupsA5realizable}  $A_5$ and its subgroups are positively realizable and hence realizable for $P(10,2)$.
\end{proposition}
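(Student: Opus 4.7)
The plan is to start with the dodecahedral embedding $\Gamma$ of $P(10,2)$ used above and modify it in a small neighborhood of each vertex so that the three incident edges acquire a chiral cyclic order, in analogy with the construction in the proof of Proposition~\ref{SL2}. Concretely, at a chosen base vertex $v_0$ I would use the ``knotted around'' construction of Figure~\ref{Fig:orient} to cyclically knot the three edges at $v_0$ (say $e_1$ knotted around $e_2$ knotted around $e_3$ knotted around $e_1$), which picks a chirality for the cyclic order of the three edges incident to $v_0$.

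I would then use that the rotation group $A_5$ of the solid dodecahedron acts transitively on its $20$ vertices with vertex stabilizer $\mathbb{Z}_3$. For each vertex $v$ of $\Gamma$ pick an isometry $g \in A_5$ with $g(v_0)=v$, and transport the tangle at $v_0$ to one at $v$. This is well-defined because the $\mathbb{Z}_3$ stabilizer of $v_0$ cyclically permutes the three edges at $v_0$ and therefore preserves the chosen cyclic order (as can be checked with Sage exactly as in the proof of Proposition~\ref{SL2}). The resulting embedding $\Gamma_{A_5}$ is by construction invariant under the isometric $A_5$ action, so $A_5 \leq \mathrm{TSG}_+(\Gamma_{A_5})$.

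The crucial step is to show that the rotation $h$ used in the proof of Proposition~\ref{A5xZ2posrealizable} is not in $\mathrm{TSG}_+(\Gamma_{A_5})$. Although $h$ preserves orientation on $S^3$, it interchanges the two $3$-balls bounded by the dodecahedral $2$-sphere and therefore acts on that sphere as an orientation-reversing involution. Consequently, at each vertex $h$ reverses the cyclic order of the three incident edges, and in particular reverses the ``knotted around'' chirality installed there; hence $h\notin \mathrm{TSG}_+(\Gamma_{A_5})$. Since $A_5$ is simple and of index $2$ in $A_5\times\mathbb{Z}_2$, it is a maximal subgroup, and therefore $\mathrm{TSG}_+(\Gamma_{A_5})=A_5$.

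Finally, no non-trivial element of $A_5$ pointwise fixes any edge of $\Gamma_{A_5}$: the only involutions in $A_5$ rotate about an axis through midpoints of two opposite edges of the dodecahedron and so swap the two endpoints of those edges, while the stabilizer of any vertex is $\mathbb{Z}_3$ and fixes no adjacent vertex. Hence Theorem~\ref{SubgroupTheorem} applies and yields positive realizability of every subgroup of $A_5$. The main obstacle I anticipate is making rigorous the claim that $h$ reverses the local cyclic orders; I would verify this by examining $h$ explicitly near one of its fixed vertices such as $u_5$, where the axis of $h$ enters along the spoke $\overline{u_5v_5}$ and therefore swaps the remaining two edges at $u_5$, forcing the ``knotted around'' cycle at $u_5$ to reverse.
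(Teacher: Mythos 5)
Your proposal is correct and follows essentially the same route as the paper: transport the ``knotted around'' tangle of Figure~\ref{Fig:orient} over all vertices via the rotation group $G=A_5$ of the solid dodecahedron, exclude $\alpha$ because it disrupts the resulting cyclic edge-order at a vertex, and finish with maximality of $A_5$ in $A_5\times\mathbb{Z}_2$ and Theorem~\ref{SubgroupTheorem}. The only (harmless) difference is that you verify the exclusion at the fixed vertex $u_5$, where $\alpha$ transposes two of the three incident edges and so must reverse the knotted-around cycle, whereas the paper checks the order change at $u_9$; your version is if anything slightly cleaner, since the knotted-around relation is preserved by any homeomorphism of the pair, so the contradiction applies to every homeomorphism inducing $\alpha$, not just to $h$.
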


\begin{proof} Starting with $\Gamma$ we embed the edges around a neighborhood of $u_1$ as in $N(u_1)$ in Figure~\ref{Fig:orient}.  Now as in the proof of Proposition~\ref{SL2}, we apply $G$ to $N(u_1)$ to modify a neighborhood around every vertex.  Since $G$ is the group of rotations of the solid dodecahedron, no non-trivial element of $G$ changes the order of edges around any vertex.    Hence this gives us a well-defined embedding $\Gamma_{A5}$ of $P(10,2)$ which is invariant under $G$.  Observe that $\alpha$  changes the order of edges around $u_9$ and hence $\alpha\not\in \mathrm{TSG}_+(\Gamma_{A5})$.   Because $A_5$ is a maximal subgroup of $\mathbb{Z}_5 \times \mathbb{Z}_2$, it follows that $\mathrm{TSG}_+(\Gamma_{A5})=A_5$.  

Finally, since no non-trivial element of $G$ pointwise fixes any edge of $\mathrm{TSG}_+(\Gamma_{A5})$, we can apply Theorem  \ref{SubgroupTheorem} to conclude that every subgroup of $A_5$ is positively realizable for $P(10,2)$.
\end{proof}

\begin{proposition}\label{D6prop}
	$D_6$ is positively realizable and hence realizable for $P(10,2)$.
\end{proposition}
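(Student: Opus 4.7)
The plan is to realize $D_6$ as a maximal subgroup of $\mathrm{Aut}(P(10,2)) = A_5 \times \mathbb{Z}_2$ and then break the symmetry of the dodecahedral embedding from Proposition~\ref{A5xZ2posrealizable} down to exactly $D_6$. The key observation is that $D_6 \cong D_3 \times \mathbb{Z}_2$ sits inside $A_5 \times \mathbb{Z}_2$ as the product of the stabilizer in $A_5$ of an antipodal pair $\{a,b\}$ of dodecahedron vertices with the central $\mathbb{Z}_2$ factor. Since the maximal subgroups of $A_5$ are $A_4$, $D_5$, and $D_3$, the subgroup $D_3$ is maximal in $A_5$, so $D_6$ is maximal in $A_5 \times \mathbb{Z}_2$. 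Thus it suffices to exhibit an embedding $\Gamma_{D_6}$ with $D_6 \leq \mathrm{TSG}_+(\Gamma_{D_6}) \lneq A_5 \times \mathbb{Z}_2$.

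Taking $\{a,b\} = \{u_5, u_{10}\}$, which is antipodal in the dodecahedral graph, let $D_3 \leq A_5$ be its stabilizer, generated by the order-$3$ rotation about the axis through $a$ and $b$ together with three perpendicular order-$2$ rotations.  Denote by $\iota$ the unique nontrivial central element of $\mathrm{Aut}(P(10,2))$, which swaps each dodecahedron vertex with its antipode and commutes with all of $A_5$; then $D_6 = D_3 \times \langle\iota\rangle$. To realize this geometrically, place the dodecahedron centered at the origin of $\mathbb{R}^3 \subset \mathbb{R}^4$ so that $A_5$ acts via $\mathrm{SO}(3) \hookrightarrow \mathrm{SO}(4)$ and $\iota$ is induced by the central inversion $-I \in \mathrm{SO}(4)$. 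Because $-I$ is central in $\mathrm{SO}(4)$, it commutes with every element of $A_5$, so $D_6$ acts as an isomorphic subgroup of $\mathrm{SO}(4)$ preserving the embedded graph. I would then form $\Gamma_{D_6}$ by attaching the same chiral invertible knot, say the trefoil $3_1$, to each of the six edges incident to $a$ or $b$.

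The set of six knotted edges is $D_6$-invariant by construction, so $D_6 \leq \mathrm{TSG}_+(\Gamma_{D_6})$. On the other hand, $P(10,2)$ is edge-transitive under its full automorphism group, so no proper nonempty set of edges is preserved by all of $A_5 \times \mathbb{Z}_2$; consequently $\mathrm{TSG}_+(\Gamma_{D_6}) \neq A_5 \times \mathbb{Z}_2$. By the maximality of $D_6$ in $A_5 \times \mathbb{Z}_2$ this forces $\mathrm{TSG}_+(\Gamma_{D_6}) = D_6$, proving the proposition. The main obstacle is the geometric realization of the $D_6$ action by commuting orientation-preserving isometries of $S^3$; the approach above handles this via the centrality of $-I$ in $\mathrm{SO}(4)$, which sidesteps the complication that the flipping rotation $h$ from Proposition~\ref{A5xZ2posrealizable} need not commute with every $3$-fold rotation of the dodecahedron as an isometry of $S^3$. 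Alternatively, one may invoke Theorem~\ref{GroupRigid} to pass to an embedding on which $A_5 \times \mathbb{Z}_2$, and hence $D_6$, is induced by an isomorphic subgroup of $\mathrm{SO}(4)$.
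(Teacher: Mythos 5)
Your proof is correct and follows essentially the same strategy as the paper's: start from the dodecahedral embedding, identify $D_6$ as the stabilizer of an antipodal vertex pair together with the central involution, tie identical invertible knots into the six edges meeting that pair, and conclude by maximality of $D_6$ in $A_5\times\mathbb{Z}_2$. The only difference is cosmetic: you realize the central automorphism by the antipodal map $-I\in\mathrm{SO}(4)$ acting on a great $2$-sphere, while the paper realizes it as the composite $h_2h_1$ of the equatorial ``inside-out'' rotation with a $2$-fold rotation of the dodecahedron and then verifies the group structure via the induced action on the pair of triads at $u_1$ and $u_6$.
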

\begin{proof}

We again start with the embedding $\Gamma$ of $P(10,2)$ from Figure~\ref{Fig:dodec}.     Let $h_1$ be a rotation of $(S^3,\Gamma)$ of order $2$ around the equator of $\Gamma$ containing the edges $\overline{u_{10}v_{10}}$ and $\overline{u_5v_5}$.  Then $h_1$ induces the automorphism $$\alpha_1=(u_1 u_9)(u_2u_8)(u_3u_7)(u_4u_6)(v_1 v_9)(v_2v_8)(v_3v_7)(v_4v_6).$$ Let $h_2$ be a rotation of $(S^3,\Gamma)$ of order $2$ around an axis that passes through the midpoints of the edges $\overline{u_7u_8}$ and $\overline{u_2u_3}$.  Then $h_2$ induces the automorphism $$\alpha_2=(u_2u_3)(u_6u_9)(u_7u_8)(u_5u_{10})(u_4u_1)(v_2v_3)(v_6v_9)(v_7v_8)(v_5v_{10})(v_4v_1).$$  It follows that $h=h_2h_1$ is an isometry of $(S^3,\Gamma)$ inducing the automorphism $$\alpha'=\alpha_2\alpha_1=(u_1u_6)(u_2u_7)(u_3u_8)(u_4u_9)(u_5u_{10})(v_1v_6)(v_2v_7)(v_3v_8)(v_4v_9)(v_5v_{10}).$$  Let $f$ be an order $3$ rotation of $(S^3, \Gamma)$ around an axis that passes through  vertices $u_1$ and $u_6$.  Then $f$ induces the automorphism $$\beta=(u_2u_{10} v_1)(u_3v_{10}v_9)(u_4 v_8 v_7)(u_5v_6u_7)(u_8v_5v_4)(u_9v_3 v_2).$$  Finally, let $g$ be the order $2$ rotation of $(S^3,\Gamma)$ around an axis that passes through the midpoints of edges $\overline{u_3u_4}$ and $\overline{u_8u_9}$.  Then $g$ induces the automorphism $$\gamma=(u_1u_6)(u_2 u_5)(u_3 u_4)(u_7 u_{10})(u_8 u_9)(v_1 v_6)(v_2 v_5)(v_3 v_4)(v_7 v_{10})(v_8 v_9).$$

Note that the pair $\{u_1,u_6\}$ is setwise fixed by $\alpha'$, $\beta$, and $\gamma$.  Now we add $4_1$ knots to the six edges containing $u_1$ or $u_6$ to obtain an embedding $\Gamma_{D6}$ such that the isometries $h$, $g$, and $f$ leave $\Gamma_{D6}$ setwise invariant.  Then $\langle h,g, f\rangle$ induces $\langle \alpha',\beta, \gamma\rangle$ on $\Gamma_{D6}$.  Since no non-trivial finite order orientation preserving isometry of $S^3$ can pointwise fix $\Gamma_{D6}$,  the isometry group $\langle h,g, f\rangle$ is isomorphic to the automorphism group $\langle \alpha',\beta, \gamma\rangle$.  Furthermore, because of the $4_1$ knots on the edges of the triads centered at $u_1$ and $u_6$, every element of $\langle h,g, f\rangle$ leaves this pair of triads  setwise invariant.  Since no non-trivial finite order orientation preserving isometry  can pointwise fix a triad, $\langle h,g, f\rangle$ induces an isomorphic action on this  pair of triads.  

However, the action of $\langle \alpha',\beta, \gamma\rangle$ on the two triads is $D_6$ because $\alpha'$ interchanges the two triads, $\beta$ rotates both triads, and $\alpha'\gamma$ flips over each triad fixing the edges $\overline{u_6v_6}$ and $\overline{u_1v_1}$. Thus $D_6\leq \mathrm{TSG}_+(\Gamma_{D6})$.  On the other hand, because of the $4_1$ knots, not every element of $\Aut(P(10,2))$ can be induced on the embedding $\Gamma_{D6}$.  It follows that $\mathrm{TSG}_+(\Gamma_{D6})$ is a proper subgroup of $\Aut(P(10,2)) = A_5 \times \mathbb{Z}_2$.  Since $D_6$ a maximal in $A_5 \times \mathbb{Z}_2$, this means that $D_6= \mathrm{TSG}_+(\Gamma_{D6})$.  \end{proof}


In summary, we have proven  the following.

\begin{theorem}
$\Aut(P(10,2)) = A_5 \times \mathbb{Z}_2$ and all of its subgroups are positively realizable and hence realizable for $P(10,2)$. 
	\end{theorem}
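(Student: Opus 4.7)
The plan is to treat this theorem as a summary whose proof consists of assembling the preceding propositions and verifying that every subgroup of $\mathrm{Aut}(P(10,2))=A_5\times\mathbb{Z}_2$ is covered. First I would cite Proposition~\ref{A5xZ2posrealizable} for the full group itself.

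For the proper subgroups, I would appeal to the fact that, up to conjugacy, the maximal subgroups of $A_5\times\mathbb{Z}_2$ are $A_5$, $A_4\times\mathbb{Z}_2$, $D_5\times\mathbb{Z}_2\cong D_{10}$, and $D_3\times\mathbb{Z}_2\cong D_6$. Propositions~\ref{propersubgroupsA5realizable}, \ref{A4xZ2realizable}, and \ref{subgroupsofA5xz2} cover the first three maximal subgroups together with all of their subgroups. Proposition~\ref{D6prop} then handles $D_6$ itself, and the proper subgroups of $D_6$---namely $\mathbb{Z}_6$, $D_3$, $V_4$, $\mathbb{Z}_3$, $\mathbb{Z}_2$, and the trivial group---are already covered: $D_3,V_4,\mathbb{Z}_3,\mathbb{Z}_2$ and the trivial group embed in $A_5$ and are handled by Proposition~\ref{propersubgroupsA5realizable}, while $\mathbb{Z}_6\cong\mathbb{Z}_3\times\mathbb{Z}_2$ embeds in $A_4\times\mathbb{Z}_2$ and is handled by Proposition~\ref{A4xZ2realizable}.

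The only nontrivial point is the enumeration of the maximal subgroups of $A_5\times\mathbb{Z}_2$. I would justify it directly using the projection $\pi\colon A_5\times\mathbb{Z}_2\to A_5$: if $\pi(H)$ is a proper subgroup of $A_5$, then $\pi(H)$ is contained in some maximal subgroup $A_4$, $D_5$, or $D_3$ of $A_5$, and so $H\leq \pi(H)\times\mathbb{Z}_2$ sits inside $A_4\times\mathbb{Z}_2$, $D_{10}$, or $D_6$ respectively. If instead $\pi(H)=A_5$, then since $A_5$ is simple and admits no surjection onto $\mathbb{Z}_2$, Goursat's lemma forces $H$ to be either $A_5\times\{0\}\cong A_5$ or all of $A_5\times\mathbb{Z}_2$. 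This enumeration could also be read off from the GroupNames database, as used elsewhere in the paper. There is no genuine obstacle in this proof; the theorem follows at once from the collated propositions together with this short group-theoretic check.
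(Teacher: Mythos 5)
Your proposal is correct and matches the paper's (implicit) proof: the theorem is stated there as a summary of Propositions~\ref{A5xZ2posrealizable}, \ref{subgroupsofA5xz2}, \ref{A4xZ2realizable}, \ref{propersubgroupsA5realizable}, and \ref{D6prop}, exactly as you assemble them. Your explicit enumeration of the maximal subgroups of $A_5\times\mathbb{Z}_2$ via Goursat's lemma and your check that the subgroups of $D_6$ are already covered simply supply details the paper leaves to the reader.
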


\section{The exceptional case $P(10,3)$ }\label{P(10,3)}


The group $\mathrm{Aut}(P(10,3))=S_5 \times \mathbb{Z}_2= \langle \alpha,\beta\rangle$ where
\begin{align*}
\alpha &= (u_1u_2u_3u_4u_5u_6u_7u_8u_9u_{10})  (v_1 v_2 v_3 v_4 v_5 v_6 v_7 v_8 v_9 v_{10}) \\
\beta &= (u_1 v_4)(u_2 u_4)(u_5 v_2)(u_6 v_9)(u_7 u_9)(u_{10} v_7).
\end{align*}

The isomorphism classes of proper nontrivial subgroups of $S_5 \times \mathbb{Z}_2$ are $A_5 \times \mathbb{Z}_2, S_5, A_5, S_4 \times \mathbb{Z}_2, (\mathbb{Z}_5 \rtimes \mathbb{Z}_2) \times \mathbb{Z}_2, S_3 \times \mathbb{Z}_2^2, A_4 \times \mathbb{Z}_2, S_4, D_{10}, \mathbb{Z}_5 \rtimes \mathbb{Z}_2, D_4 \times \mathbb{Z}_2, \mathbb{Z}_6 \times \mathbb{Z}_2, D_6, A_4, \mathbb{Z}_{10}, D_5, \mathbb{Z}_2^3, D_4, \mathbb{Z}_4 \times \mathbb{Z}_2, \mathbb{Z}_6, S_3, \mathbb{Z}_5, \mathbb{Z}_2^2, \mathbb{Z}_4, \mathbb{Z}_3$ and  $\mathbb{Z}_2$.  We will see that all of these subgroups are realizable, but not all are positively realizable for $P(10,3)$. We begin by determining which subgroups are not positively realizable.

\begin{proposition}
$\mathbb{Z}_{5} \rtimes \mathbb{Z}_4$ , $\mathbb{Z}_{10} \rtimes \mathbb{Z}_4$ and $S_5 \times \mathbb{Z}_2$ are not positively realizable for $P(10,3)$.
\end{proposition}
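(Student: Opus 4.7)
First I observe that $(n,k)=(10,3)$ satisfies $k^2=9\equiv -1\pmod{10}$, putting us in the setting of Section~\ref{k^2=-1}, so $B(10,3)=\mathbb{Z}_{10}\rtimes\mathbb{Z}_4$. Two of the three groups in the statement live inside $B(10,3)$: the group $B(10,3)$ itself is $\mathbb{Z}_{10}\rtimes\mathbb{Z}_4$, and $\langle\rho^2,\alpha\rangle\leq B(10,3)$ realizes $\mathbb{Z}_5\rtimes\mathbb{Z}_4$ (from $\alpha(\rho^2)\alpha^{-1}=\rho^6=(\rho^2)^3$ and the fact that $3$ has order $4$ in $(\mathbb{Z}_5)^{\times}$, this subgroup is the Frobenius group of order $20$). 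Because each of these two groups contains the order-$4$ element $\alpha$, neither sits inside $D_{10}$; and because each has order at least $20$, neither equals $\mathbb{Z}_4$. Theorem~\ref{order4} therefore rules out positive realizability for $\mathbb{Z}_5\rtimes\mathbb{Z}_4$ and $\mathbb{Z}_{10}\rtimes\mathbb{Z}_4$ at once.

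The third group $S_5\times\mathbb{Z}_2=\mathrm{Aut}(P(10,3))$ is \emph{not} a subgroup of $B(10,3)$, so Theorem~\ref{order4} does not apply directly. My plan here is to argue by contradiction, replaying the Involution Theorem portion of the proof of Theorem~\ref{order4} on the subgroup $\mathbb{Z}_5\rtimes\mathbb{Z}_4$ that sits inside $S_5\times\mathbb{Z}_2$. Suppose $S_5\times\mathbb{Z}_2=\mathrm{TSG}_+(\Gamma)$ for some embedding $\Gamma$. Since $\langle\rho^2,\alpha\rangle\cong\mathbb{Z}_5\rtimes\mathbb{Z}_4$ lies in $B(10,3)\leq S_5\times\mathbb{Z}_2$, it lies in $\mathrm{TSG}_+(\Gamma)$. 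By Theorem~\ref{GroupRigid}, I replace $\Gamma$ by an embedding $\Gamma'$ whose $\mathrm{TSG}_+(\Gamma')$ is induced by an isomorphic subgroup $H'\leq\mathrm{SO}(4)$; let $G\leq H'$ be the image of $\langle\rho^2,\alpha\rangle$, so $G\cong\mathbb{Z}_5\rtimes\mathbb{Z}_4$.

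A short calculation, parallel to the one in Lemma~\ref{NoOrder4}, shows that the involutions of $\langle\rho^2,\alpha\rangle$ are exactly the five elements $\rho^{2i}\alpha^2$ for $i=0,\dots,4$. Each one turns over both the inner and outer $10$-cycles of $P(10,3)$, fixing two vertices on each cycle. Hence every involution of $G\leq\mathrm{SO}(4)$ pointwise fixes at least four points of $\Gamma'$, and Smith Theory forces $\mathrm{fix}(g)\cong S^1$; moreover, as in Theorem~\ref{order4}, any non-trivial element of $G$ whose fixed circle contains those four vertices must itself be an involution and is determined by them, so distinct involutions of $G$ have distinct fixed circles and no non-involution shares a fixed circle with an involution. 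Theorem~\ref{involution} then forces $G$ to be a subgroup of $D_m\times D_m$ for some odd $m$ or a finite subgroup of $\mathrm{SO}(3)$. To complete the contradiction, I observe that $D_m$ has no element of order $4$ (so neither does $D_m\times D_m$) and that none of the finite subgroups of $\mathrm{SO}(3)$, namely $\mathbb{Z}_n$, $D_n$, $A_4$, $S_4$, $A_5$, contains a subgroup of order $20$. Each conclusion contradicts $G\cong\mathbb{Z}_5\rtimes\mathbb{Z}_4$.

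The only real obstacle is bridging from Theorem~\ref{order4}, which addresses only subgroups of $B(n,k)$, to the full exceptional automorphism group $S_5\times\mathbb{Z}_2$; the point that makes this bridge short is that $S_5\times\mathbb{Z}_2$ still contains the subgroup $\langle\rho^2,\alpha\rangle$ of $B(10,3)$, and its involutions already have the ``flips both cycles'' structure that powers the Involution Theorem argument.
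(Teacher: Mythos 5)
Your proof is correct, and its engine for the hard case is the same as the paper's: apply Theorem~\ref{GroupRigid} to replace the symmetry group by an isomorphic subgroup of $\mathrm{SO}(4)$, verify the hypotheses of Theorem~\ref{involution} for the copy of $\mathbb{Z}_5\rtimes\mathbb{Z}_4$ using the fact that its involutions flip both $10$-cycles, and observe that $\mathbb{Z}_5\rtimes\mathbb{Z}_4$ is not among the groups allowed by that theorem's conclusion. The packaging differs in two ways. First, you dispatch $\mathbb{Z}_5\rtimes\mathbb{Z}_4$ and $\mathbb{Z}_{10}\rtimes\mathbb{Z}_4$ by citing Theorem~\ref{order4}, which is legitimate since $3^2\equiv-1\pmod{10}$, both groups sit inside $B(10,3)$, neither lies in $D_{10}$, and neither is $\mathbb{Z}_4$; the paper does not take this shortcut. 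Second, the paper runs the Involution Theorem argument only once, on $\mathbb{Z}_5\rtimes\mathbb{Z}_4$ itself, and notes that the argument shows this group is not even \emph{contained} in $\mathrm{TSG}_+(\Lambda)$ for any embedding $\Lambda$; all three non-realizability claims then follow simultaneously because each of the three groups contains a copy of $\mathbb{Z}_5\rtimes\mathbb{Z}_4$. You could have used that same observation to avoid both the appeal to Theorem~\ref{order4} and the re-run of the argument inside $S_5\times\mathbb{Z}_2$. One small imprecision: your final claim that none of $\mathbb{Z}_n$, $D_n$, $A_4$, $S_4$, $A_5$ ``contains a subgroup of order $20$'' is false as stated ($\mathbb{Z}_{20}$ and $D_{10}$ are such subgroups of $\mathrm{SO}(3)$); what you need, and what is true, is that none contains a subgroup isomorphic to $\mathbb{Z}_5\rtimes\mathbb{Z}_4$, since that group is nonabelian with an element of order $4$ (ruling out cyclic and dihedral subgroups), $|A_4|<20$, $20\nmid |S_4|$, and $A_5$ has no subgroup of index $3$.
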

\begin{proof}  Suppose that $\mathbb{Z}_{5} \rtimes \mathbb{Z}_4\leq \mathrm{TSG}_+(\Lambda)$ for some embedding $\Lambda$ of $P(10,3)$ in $S^3$.  Then by Theorem~\ref{GroupRigid}, for some embedding $\Gamma$ of $P(10,3)$ in $S^3$, the group $\mathbb{Z}_{5} \rtimes \mathbb{Z}_4$ is induced on $\Gamma$ by an isomorphic group of orientation preserving isometries $G$.

Using Sage, we determined the elements of $\mathrm{Aut}(P(10,3))$.  In particular, every involution in $\mathbb{Z}_{5} \rtimes \mathbb{Z}_4$ fixes either two or four vertices.  Thus for every involution $g\in G$ we have $\mathrm{fix}(g)=S^1$.  Since no element of $G$ of order $4$ or $5$ fixes any vertices, no other element of $G$ has the same fixed point set as an involution.  Hence $G$ satisfies the hypothesis of Theorem~\ref{involution}, but $\mathbb{Z}_{5} \rtimes \mathbb{Z}_4$ is not one of the groups in the conclusion of the theorem.  Thus $\mathbb{Z}_{5} \rtimes \mathbb{Z}_4$ is not contained in $\mathrm{TSG}_+(\Lambda)$ for any embedding $\Lambda$ of $P(10,3)$ in $S^3$.  Now since $\mathbb{Z}_{10} \rtimes \mathbb{Z}_4$ and $S_5 \times \mathbb{Z}_2$ each contain $\mathbb{Z}_{5} \rtimes \mathbb{Z}_4$ as a subgroup, they also cannot be positively realizable.
\end{proof}

\begin{lemma}\label{neg246}  The following automorphisms are not positively realizable for $P(10,3)$.
\begin{itemize}
\item An order $2$ automorphism with only six $2$-cycles.
\item  An order $4$ automorphism with $2$-cycles which contain adjacent vertices.
\item An order $6$ automorphism with $3$-cycles.
\end{itemize}
\end{lemma}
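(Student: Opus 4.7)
The common thread in all three parts is to invoke Theorem~\ref{Rigid} together with Smith Theory so that we may assume any hypothetical positive realization is induced by an orientation-preserving finite-order homeomorphism $h$ of $S^{3}$ whose fixed set is either empty or a circle, and then extract a geometric contradiction from the way the graph must sit on that fixed set. A key auxiliary observation, used throughout, is the following: if $h$ is any orientation-preserving involution and $\overline{uv}$ is an edge with both endpoints pointwise fixed by $h$, then $h|_{\overline{uv}}$ is an orientation-preserving involution of an arc that fixes its two endpoints, and is therefore the identity. Consequently, $\overline{uv}\subseteq\mathrm{fix}(h)$.

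For item~(1), an order $2$ automorphism $\sigma$ with exactly six $2$-cycles fixes $20-12=8$ of the vertices of $P(10,3)$. If $\sigma$ were positively realizable by $h$, then $\mathrm{fix}(h)=S^{1}$ would contain all $8$ fixed vertices and, by the auxiliary observation, every edge between two fixed vertices would also lie in this $S^{1}$. So the subgraph induced on the fixed vertices would embed in $S^{1}$ and have maximum degree at most $2$. Since $\mathrm{Aut}(P(10,3))\cong S_{5}\times \mathbb{Z}_{2}$ has a unique conjugacy class of involutions with cycle type $1^{8}2^{6}$ (verified via Sage), every such $\sigma$ is conjugate to the $\beta$ given at the start of the section; its fixed vertex set $\{u_{3},u_{8},v_{1},v_{3},v_{5},v_{6},v_{8},v_{10}\}$ contains $v_{3}$, whose three neighbors $u_{3},v_{6},v_{10}$ are all fixed. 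This degree-$3$ vertex in the fixed subgraph contradicts the $S^{1}$-embedding.

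For item~(2), let $h$ have order $4$ and swap adjacent vertices $u,v$. Then $h^{2}$ pointwise fixes $u,v$, so by the auxiliary observation $h^{2}$ pointwise fixes the edge $e=\overline{uv}$, giving $\mathrm{fix}(h^{2})=S^{1}\supseteq e$. But $h|_{e}$ is an orientation-reversing involution of an arc that swaps the endpoints, so it fixes the midpoint of $e$; hence $\mathrm{fix}(h)\neq\emptyset$ and Smith Theory forces $\mathrm{fix}(h)=S^{1}$. Now $\mathrm{fix}(h)\subseteq\mathrm{fix}(h^{2})$ but $u\in\mathrm{fix}(h^{2})\setminus\mathrm{fix}(h)$, so the inclusion is strict while both sides are homeomorphic to $S^{1}$. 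No proper closed subspace of $S^{1}$ is homeomorphic to $S^{1}$, the desired contradiction.

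For item~(3), suppose $\alpha$ has order $6$ and its cycle decomposition contains a $3$-cycle $(a,b,c)$, realized by $h$ of order $6$. Then $h^{3}$ pointwise fixes $a,b,c$, so $\mathrm{fix}(h^{3})=S^{1}$. For any $v\in\mathrm{fix}(h^{3})\cap V(P(10,3))$, the $\langle h\rangle$-stabilizer of $v$ contains $h^{3}$, and it cannot equal $\langle h\rangle$ (otherwise $h$ would fix $a,b,c$ rather than permute them); hence it is $\langle h^{3}\rangle$ and the $h$-orbit of $v$ has length $3$. In particular $|\mathrm{fix}(\alpha^{3})\cap V(P(10,3))|$ is divisible by $3$. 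Enumerating in $\mathrm{Aut}(P(10,3))=S_{5}\times\mathbb{Z}_{2}$, the order-$6$ elements whose cycle structure on the $20$ vertices contains a $3$-cycle arise from $\sigma=(abc)(de)\in S_{5}$, giving either $(\sigma,1)$ with cycle type $1^{2}3^{2}6^{2}$ or $(\sigma,z)$ with $z$ the central involution. In the first case $\alpha^{3}=(de)$ has $8$ fixed vertices, and $8\not\equiv 0\pmod 3$ contradicts the divisibility just proved. In the second case $\alpha^{3}=(de)z$ has $12$ fixed vertices; one applies the item~(1) argument to $\alpha^{3}$ and checks, using the explicit description $z(u_{i})=u_{i+5}$, $z(v_{i})=v_{i+5}$ of the central involution, that the fixed subgraph of $\alpha^{3}$ still contains a vertex of degree $\ge 3$. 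The main obstacle is this last sub-case, which reduces to a finite verification once the action of $z$ on $P(10,3)$ is made explicit; the other items follow cleanly from Rigidity, Smith, and the auxiliary observation.
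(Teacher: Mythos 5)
Your arguments for the first two bullets are correct and essentially identical to the paper's: for the order~$2$ case the paper likewise passes to the conjugacy representative $\beta$, notes that the six pointwise-fixed edges form two triads (your degree-$3$ vertex $v_3$ is the center of one of them), and derives a contradiction with $\mathrm{fix}(h)\cong S^1$; for the order~$4$ case the paper gives exactly your $\mathrm{fix}(h)=\mathrm{fix}(h^2)$ contradiction. Your explicit ``auxiliary observation'' that an edge with both endpoints fixed lies in $\mathrm{fix}(h)$ is a useful piece of bookkeeping that the paper leaves implicit.

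The third bullet is where your proof has a genuine gap. The paper's route is a one-line reduction: every order~$6$ automorphism with $3$-cycles is conjugate to an explicit $\rho$ of cycle type $1^2 3^2 6^2$, so $\rho^3$ is an order~$2$ automorphism with exactly six $2$-cycles, and positive realizability of $\rho$ by $h$ would force positive realizability of $\rho^3$ by $h^3$, contradicting the first bullet. You instead attempt a classification of order-$6$ elements of $S_5\times\mathbb{Z}_2$ together with a divisibility-mod-$3$ constraint. The constraint itself is fine (though your parenthetical needs the extra step: if some vertex were fixed by $h$, then $\emptyset\neq\mathrm{fix}(h)\subseteq\mathrm{fix}(h^3)$ forces $\mathrm{fix}(h)=\mathrm{fix}(h^3)\ni a,b,c$, a contradiction), but the case analysis is not carried out correctly. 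First, you explicitly defer the sub-case $(\sigma,z)$ to ``a finite verification'' that you never perform, so the proof is incomplete as written. Second, the claim you would need there --- that $\alpha^3=(de)z$ fixes $12$ vertices --- is false: the two lifts of a transposition to $\mathrm{Aut}(P(10,3))$ have $8$ and $0$ fixed vertices respectively (one can check directly that $\beta z$, with $z(u_i)=u_{i+5}$, $z(v_i)=v_{i+5}$, is fixed-point free), and no involution in $\mathrm{Aut}(P(10,3))$ fixes $12$ vertices. In fact the sub-case is vacuous, because the lift of $(abc)(de)$ whose cube is the fixed-point-free transposition lift has no orbit of size $3$ on the vertices and hence no $3$-cycles; but establishing that is precisely the verification you skipped. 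The cleanest repair is the paper's: compute that the cube of any order-$6$-with-$3$-cycles automorphism has cycle type $1^8 2^6$ and quote your first bullet.
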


\begin{proof} Suppose that an automorphism of $P(10,3)$ is positively realizable.  Then by Theorem~\ref{Rigid}, that automorphism is induced on some embedding by a finite order orientation preserving homeomorphism of $S^3$. By Smith Theory \cite{SMI}, such a homeomorphism either pointwise fixes an $S^1$ or is fixed point free.

All order $2$ automorphisms that contain only six $2$-cycles are conjugate to $$(u_1 v_4)(u_2 u_4)(u_5 v_2)(u_6 v_9)(u_7 u_9)(u_{10} v_7)$$ which pointwise fixes the edges $\overline{v_3u_3}$, $\overline{v_3v_{10}}$, $\overline{v_3v_6}$, $\overline{v_8u_8}$, $\overline{v_8v_5}$, $\overline{v_8,v_{1}}$.  Since these six edges form two triads, they cannot be contained in an $S^1$.  Thus no such order $2$ automorphism can be positively realizable.

An order $4$ automorphism that contains $2$-cycles with adjacent vertices, flips over the edges between such vertices, pointwise fixing their midpoints.  Suppose that such an automorphism is induced on some embedding by a finite order orientation preserving homeomorphism $h$ of $S^3$.  Then $\mathrm{fix}(h)$ is an $S^1$ and so is $\mathrm{fix}(h^2)$.  But $\mathrm{fix}(h)\subseteq \mathrm{fix}(h^2)$ and hence these sets are equal.  This is impossible since $h^2$ pointwise fixes edges which are flipped over by $h$.

All order $6$ automorphisms that contain $3$-cycles are conjugate to the automorphism $\rho=(u_1,v_2,u_3)(u_4 u_{10} v_5 v_3 v_1 v_9)(u_5 v_{10} v_8 v_6 v_4 u_9)(u_6 v_7 u_8)$.  If $\rho$ were positively realizable, then $\rho^3$ would be as well.  But $\rho^3$ is an order $2$ automorphism that contains only six $2$-cycles, which we saw is not positively realizable.\end{proof}

\begin{proposition}\label{consequences246} $\mathbb{Z}_6\times \mathbb{Z}_2$, $D_6\times \mathbb{Z}_2$, $S_4$, $S_5$, $S_4\times \mathbb{Z}_2$, $\mathbb{Z}_4 \times \mathbb{Z}_2$, and $D_4 \times \mathbb{Z}_2$ are not positively realizable for $P(10,3)$.
\end{proposition}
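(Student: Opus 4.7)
The plan is to show that each group in the list contains a forbidden automorphism from Lemma~\ref{neg246}, so that no embedding can positively realize it. I will subdivide by the relevant element order.

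For the order-$6$ cases $\mathbb{Z}_6\times\mathbb{Z}_2$ and $D_6\times\mathbb{Z}_2$, both groups contain a cyclic subgroup of order $6$, hence an element of order $6$. Using Sage, I will enumerate the conjugacy classes of order-$6$ elements of $\mathrm{Aut}(P(10,3))$ and compute their disjoint cycle decompositions on the $20$ vertices; I expect each such decomposition to contain at least one $3$-cycle. Once this is verified, part (3) of Lemma~\ref{neg246} says the generator of every order-$6$ cyclic subgroup is not positively realizable, and so neither are $\mathbb{Z}_6\times\mathbb{Z}_2$ and $D_6\times\mathbb{Z}_2$.

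For the order-$4$ cases $S_4$, $S_5$, $S_4\times\mathbb{Z}_2$, $\mathbb{Z}_4\times\mathbb{Z}_2$, and $D_4\times\mathbb{Z}_2$, each of these groups contains $\mathbb{Z}_4$. My goal is to show that any subgroup of $\mathrm{Aut}(P(10,3))$ isomorphic to one of these groups contains an order-$4$ element whose $2$-cycles include a pair of adjacent vertices of $P(10,3)$. With Sage I will enumerate the conjugacy classes of $\mathbb{Z}_4$ subgroups of $\mathrm{Aut}(P(10,3))$ and flag those whose generator has such an adjacent-vertex $2$-cycle; then I will verify that every subgroup of the target type contains at least one flagged $\mathbb{Z}_4$. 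By part (2) of Lemma~\ref{neg246} none of these groups is then positively realizable.

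The hardest step will be the verification in the order-$4$ case: there may be multiple non-conjugate $\mathbb{Z}_4$ subgroups of $\mathrm{Aut}(P(10,3))$, and I must rule out the possibility that a larger target group could be assembled entirely from ``good'' (non-adjacent) $\mathbb{Z}_4$'s. Since $\mathrm{Aut}(P(10,3))\cong S_5\times\mathbb{Z}_2$ has order $240$, the enumeration of subgroups and generators is of manageable size and amenable to direct Sage computation. The order-$6$ case involves only a handful of conjugacy classes and should be essentially routine.
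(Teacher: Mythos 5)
Your overall strategy -- locate a forbidden automorphism from Lemma~\ref{neg246} inside every copy of each target group, verified by a Sage enumeration of conjugacy classes of subgroups -- is the same as the paper's. But the specific criteria you plan to check are not sufficient, and the verification you flag as ``the hardest step'' would in fact come back negative. For the order-$4$ cases, the paper finds that there are \emph{two} conjugacy classes of subgroups isomorphic to $S_4$, and only one of them contains an order-$4$ automorphism with an adjacent-vertex $2$-cycle; the other is ruled out because it contains an order-$2$ automorphism with only six $2$-cycles (the first bullet of Lemma~\ref{neg246}, which your proposal never invokes). Concretely, in the model of $\mathrm{Aut}(P(10,3))\cong S_5\times\mathbb{Z}_2$ acting on the $2$- and $3$-subsets of $\{1,\dots,5\}$, the $4$-cycles of $S_5\times\{1\}$ have $2$-cycles only on non-adjacent pairs, so the ``standard'' $S_4$ and $S_5$ are assembled entirely from your ``good'' $\mathbb{Z}_4$'s; this is forced to happen, since the paper proves $\mathbb{Z}_4$ and $D_4$ \emph{are} positively realizable (Proposition~\ref{P103_pos_D4}). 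For those two classes you need the transposition-type involutions and part (1) of the lemma, not part (2).

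The order-$6$ case has the same defect: you expect every order-$6$ automorphism to contain a $3$-cycle, but that cannot be true, because $\mathbb{Z}_6$ is positively realizable as a subgroup of $D_6$ (Proposition~\ref{P103_D6_pos}); indeed the very wording of Lemma~\ref{neg246} (``order $6$ automorphisms with $3$-cycles'') presupposes that some order-$6$ automorphisms have none. As stated, your check would fail and you would conclude nothing. The correct (and true) statement, which is what the paper verifies, is that the unique conjugacy class of subgroups isomorphic to $\mathbb{Z}_6\times\mathbb{Z}_2$ contains \emph{some} order-$6$ element with $3$-cycles, which then also handles $D_6\times\mathbb{Z}_2$ by containment. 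Your treatment of $\mathbb{Z}_4\times\mathbb{Z}_2$, $D_4\times\mathbb{Z}_2$, and $S_4\times\mathbb{Z}_2$ does go through as planned, since the unique class of $\mathbb{Z}_4\times\mathbb{Z}_2$ does contain a flagged order-$4$ element.
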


\begin{proof} Using Sage, we determined that there are two conjugacy classes of subgroups of $\mathrm{Aut}(P(10,3))$ that are isomorphic to $S_4$.  One contains an order $2$ automorphism with only six $2$-cycles while the other contains an order $4$ automorphism with a $2$-cycle with adjacent vertices.  Thus by Lemma~\ref{neg246}, neither is positively realizable.  Since $S_5$ and $S_4\times \mathbb{Z}_2$ both contain $S_4$, they too are not positively realizable.

We also determined with Sage that $ \mathrm{Aut}(P(10,3))$ has only one conjugacy class isomorphic to $\mathbb{Z}_6\times \mathbb{Z}_2$ and it contains an order $6$ automorphism that includes $3$-cycles.  Thus by Lemma~\ref{neg246}, it is not positively realizable.  Since $D_6\times \mathbb{Z}_2$ contains $\mathbb{Z}_6\times \mathbb{Z}_2$, it too is not positively realizable.

Finally, we determined that there is only one conjugacy class of subgroups isomorphic to $\mathbb{Z}_4 \times \mathbb{Z}_2$ and it contains an order $4$ element with $2$-cycles with adjacent vertices.  Thus by Lemma~\ref{neg246}, it is not positively realizable.  Since $D_4 \times \mathbb{Z}_2$ contains $\mathbb{Z}_4 \times \mathbb{Z}_2$, it too is not positively realizable.\end{proof}

Below we determine the positively realizable subgroups of $\mathrm{Aut}(P(10,3))$.  By Theorem \ref{Thm:Gen_Dn} we have Proposition~\ref{d10}.

\begin{proposition}\label{d10}
$D_{10}$ and all of its subgroups are positively realizable and hence realizable for $P(10,3)$.
\end{proposition}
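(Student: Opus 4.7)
The plan is to observe that this proposition is an immediate specialization of the general Theorem~\ref{Thm:Gen_Dn}, taking $n=10$ and $k=3$. Since Theorem~\ref{Thm:Gen_Dn} was stated and proved for \emph{every} pair $(n,k)$ with $2k<n$, including the exceptional pairs, there is nothing specific to $P(10,3)$ that needs to be checked; the construction goes through verbatim.

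For completeness, I would briefly recall why the construction in the proof of Theorem~\ref{Thm:Gen_Dn} does apply to $P(10,3)$. Place $10$ outer vertices $u_1,\dots,u_{10}$ evenly spaced on a large planar circle $C$, and $10$ inner vertices $v_1,\dots,v_{10}$ evenly spaced on a small concentric planar circle $c$, with each spoke $\overline{u_iv_i}$ drawn as the straight radial segment between them. Embed the outer edges $\overline{u_iu_{i+1}}$ as arcs of $C$. The inner edges $\overline{v_iv_{i+3}}$ of $P(10,3)$ form a single $10$-cycle (since $\gcd(10,3)=1$); embed them so that the crossings along each inner edge alternate, exactly as depicted for $P(7,2)$ in Figure~\ref{P62}. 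The resulting embedding $\Gamma'$ is invariant under the rotation $f$ of order $10$ about the central axis and the flip $g$ of order $2$ about a diameter through two opposite spokes (note that $10$ is even, so $g$ fixes two spokes), so $D_{10}=\langle f,g\rangle\leq \mathrm{TSG}_+(\Gamma')$.

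To pin down $\mathrm{TSG}_+$ and arrange the hypotheses of the Subgroup Theorem, I would then add a $4_1$ knot to each outer edge to produce $\Gamma$. The outer $10$-gon $L$ becomes the unique $10$-cycle in $\Gamma$ containing ten copies of $4_1$, so every element of $\mathrm{TSG}_+(\Gamma)$ must preserve $L$ setwise, whence $\mathrm{TSG}_+(\Gamma)\leq \mathrm{TSG}_+(L)\leq D_{10}$; combined with the reverse inclusion this gives $\mathrm{TSG}_+(\Gamma)=D_{10}$. No non-trivial element of $D_{10}$ can pointwise fix an outer edge (that would pointwise fix $L$ and hence all of $\Gamma$), so the Subgroup Theorem (Theorem~\ref{SubgroupTheorem}) applies and yields positive realizability for every subgroup of $D_{10}$. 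Finally, positive realizability implies realizability by adding a chiral invertible knot to every edge, as noted in the introduction.

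There is no real obstacle here; the only point worth double-checking is the parity-dependent description of the axis of $g$, but since $n=10$ is even the axis passes through two opposite spokes exactly as in the statement of Theorem~\ref{Thm:Gen_Dn}. Hence the proof is a one-line invocation of that theorem.
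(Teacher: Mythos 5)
Your proposal is correct and matches the paper exactly: the paper proves Proposition~\ref{d10} with the single line ``By Theorem~\ref{Thm:Gen_Dn} we have Proposition~\ref{d10},'' and your recapitulation of that theorem's construction for $n=10$, $k=3$ is faithful to its proof. Nothing further is needed.
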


\begin{proposition} \label{P103_pos_D4} $\mathbb{Z}_4$ and
$D_4$ are positively realizable and hence realizable for $P(10,3)$.
\end{proposition}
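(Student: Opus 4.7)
The plan is to handle the two groups separately. For $\mathbb{Z}_4$, I would simply observe that $3^2 \equiv -1 \pmod{10}$ with $n=10$ even, so Theorem~\ref{order4} already provides a positive realization: the construction there uses only a glide rotation on a solid torus together with the Edge Embedding Lemma, and nothing in that argument requires $(n,k)$ to be non-exceptional. Hence a single citation suffices for the $\mathbb{Z}_4$ case.

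The work is in producing a positive realization of $D_4$. The argument following Lemma~\ref{NoOrder4} shows that $D_4$ is not a subgroup of $B(10,3) = \mathbb{Z}_{10} \rtimes \mathbb{Z}_4$, so any realizing group of isometries must induce an automorphism of $P(10,3)$ lying outside $B(10,3)$, i.e.\ one of the exceptional automorphisms. My first step would be to use Sage to enumerate the conjugacy classes of subgroups of $\mathrm{Aut}(P(10,3)) = S_5 \times \mathbb{Z}_2$ isomorphic to $D_4$ and to write down explicit permutations $r$ of order $4$ and $s$ of order $2$ with $srs = r^{-1}$ generating one such class whose vertex orbits look geometrically feasible.

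Next I would realize $\langle r,s\rangle$ by an isomorphic subgroup $\langle h,\sigma\rangle$ of $\mathrm{SO}(4)$, with $h$ a finite-order glide rotation on a standard solid torus (reminiscent of the $\mathbb{Z}_4$ construction from Theorem~\ref{order4}) and $\sigma$ an involution satisfying $\sigma h \sigma = h^{-1}$, chosen so that the action on an appropriately placed vertex set $W \subset S^3$ matches the action of $\langle r,s\rangle$ on $V(P(10,3))$. Then I would verify the four hypotheses of Lemma~\ref{edgeembeddinglemma}, using in particular Smith theory to check that adjacent vertices are not interchanged by any element of $\langle h,\sigma\rangle$ and that fixed-point sets of commuting involutions can be arranged on a single $S^1$; this produces an embedding $\Gamma_0$ of $P(10,3)$ setwise invariant under $\langle h,\sigma\rangle$, giving $D_4 \leq \mathrm{TSG}_+(\Gamma_0)$.

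The main obstacle, as in the analogous propositions earlier in this section, is showing $\mathrm{TSG}_+(\Gamma)$ is exactly $D_4$ rather than a larger subgroup of $\mathrm{Aut}(P(10,3))$. By Proposition~\ref{consequences246} the groups $D_4 \times \mathbb{Z}_2$, $S_4$, and $S_4 \times \mathbb{Z}_2$ are already not positively realizable, so I only need to rule out the remaining subgroups of $\mathrm{Aut}(P(10,3))$ which properly contain my chosen $D_4$. I would obtain $\Gamma$ from $\Gamma_0$ by placing identical invertible, chiral knots (for instance, copies of $4_1$ or $3_1$) on a single $\langle h,\sigma\rangle$-orbit of edges chosen so that no strictly larger subgroup of $\mathrm{Aut}(P(10,3))$ preserves that orbit; any homeomorphism of $(S^3,\Gamma)$ must permute this distinguished knotted orbit setwise, forcing $\mathrm{TSG}_+(\Gamma) \leq D_4$ and hence equality. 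Identifying the right orbit—one that is $D_4$-invariant but not invariant under any intermediate subgroup—is the delicate combinatorial step, and is where I expect most of the casework to occur.
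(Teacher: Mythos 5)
Your treatment of $\mathbb{Z}_4$ is exactly the paper's: since $3^2\equiv -1\pmod{10}$ and $10$ is even, Theorem~\ref{order4} applies verbatim, and indeed nothing in that construction uses non-exceptionality. For $D_4$ your overall strategy is close to the paper's but diverges in two places worth noting. First, the construction: the paper does not route through the Edge Embedding Lemma at all; it writes down explicit generators $\nu_1$ (order $4$) and $\nu_2$ (order $2$) and exhibits a concrete embedding $\Gamma_{D_4}$ on the boundary and core of a standard solid torus, with $\nu_1$ induced by a glide rotation (meridional $\pi/2$, longitudinal $\pi$) and $\nu_2$ by a $\pi$-rotation. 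Your glide-rotation-plus-involution picture is the right geometric idea, but your version leaves the genuinely hard content as placeholders: you have not produced a specific $D_4\leq S_5\times\mathbb{Z}_2$ for which the Edge Embedding Lemma hypotheses actually hold, and your appeal to Smith theory to verify that no adjacent vertices are interchanged is misplaced --- that condition is purely combinatorial, determined by the chosen permutations, not by fixed-point-set considerations. Second, and more importantly, your final knot-decorating step is unnecessary: the only subgroups of $S_5\times\mathbb{Z}_2$ containing $D_4$ as a \emph{maximal} subgroup are $D_4\times\mathbb{Z}_2$ and $S_4$, so any group properly containing $D_4$ contains one of these, and Proposition~\ref{consequences246} already shows each contains an automorphism that is not positively realizable. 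Hence $D_4\leq\mathrm{TSG}_+(\Gamma_{D_4})$ forces equality with no further modification of the embedding. Recognizing this would have let you drop the ``delicate combinatorial step'' of hunting for a distinguished $D_4$-invariant edge orbit, which as written is the one part of your plan you concede you cannot yet carry out.
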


\begin{proof} Since $\mathbb{Z}_4\leq\mathbb{Z}_{10} \rtimes \mathbb{Z}_4=B(10,3)$, it follows from Theorem~\ref{order4} that $\mathbb{Z}_4$ is positively realizable.  Now we consider $D_4=\langle \nu_1,\nu_2 \rangle$, where 
\begin{align*}
\nu_1 &=(u_1 u_5)(u_2 v_5 v_1 u_4)(u_3 v_2 v_8 v_4)(u_6 u_{10})(u_7 v_{10} v_6 u_9)(u_8 v_7 v_3 v_9) \\
\nu_2 &= (u_1 u_6)(u_2 u_7)(u_3 v_7)(u_4 v_{10})(u_5 u_{10})(u_8 v_2)(u_9 v_5)(v_1 v_6)(v_3 v_4)(v_8 v_9).
\end{align*}

\smallskip

 Let $\Gamma_{D4}$ be the embedding of $P(10,3)$ in a standardly embedded solid torus $T$ in $S^3$, illustrated in Figure~\ref{D4}, where the edges which are on $\partial T$ are  green on the flat torus  and the edges of $\Gamma_{D4}$ which go into the interior of $T$ are red.  The edges $\overline{u_{10}u_1}$ and $\overline{u_5u_6}$ are diametrically opposed arcs on the core of the solid torus illustrated in the center.  The vertices in a given orbit of $\nu_1$ are the same color.  Two special meridians are colored purple and orange on the right.

\begin{figure}[h!] 
\begin{center}
\centering\includegraphics[scale=.28]{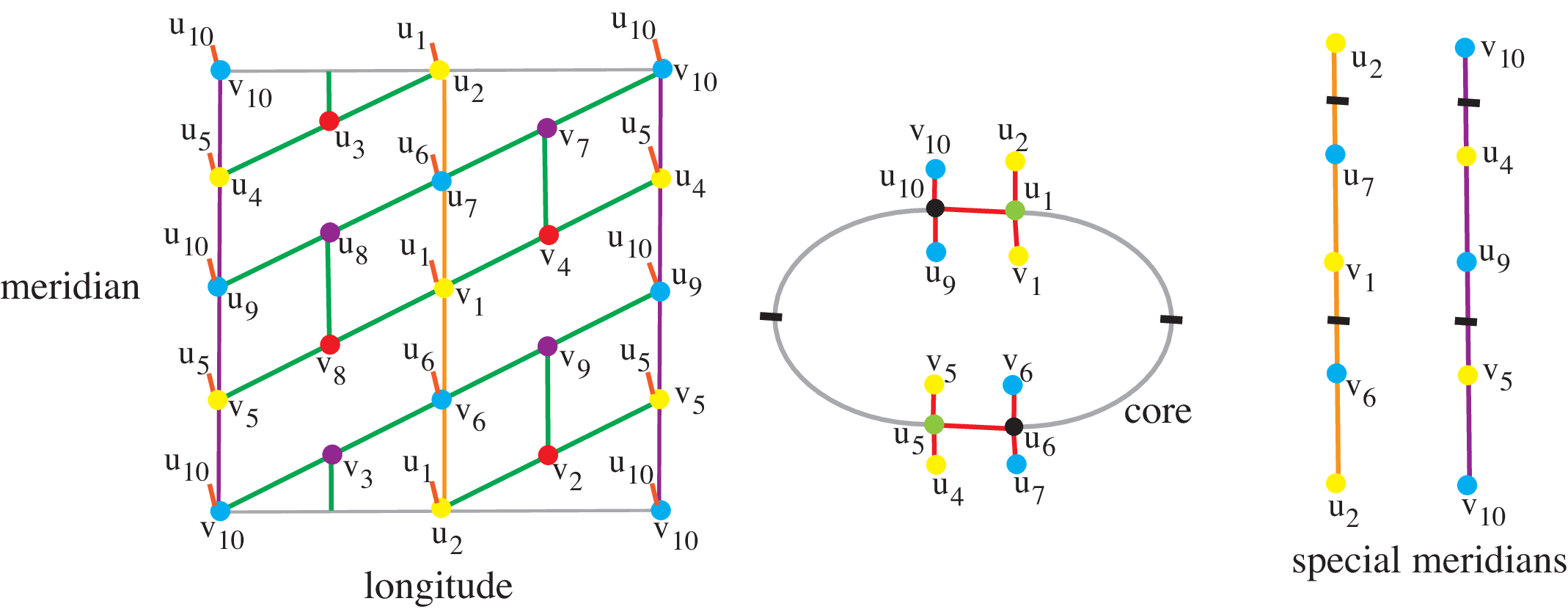}
\end{center}
\caption{On the left and center, is an embedding $\Gamma_{D_4}$ of $P(10,3)$ on the boundary and core of a standardly embedded solid torus such that $\mathrm{TSG}_+(\Gamma_{D_4}) = D_4$.  On the right, we display two special meridians.}
\label{D4}
\end{figure}

The automorphism $\nu_1$ is induced by the glide rotation $h_{\nu_1}$ which rotates $T$ meridionally by $\pi/2$ around its core and rotates $T$ longitudinally by $\pi$ around the core of a complementary solid torus.  The automorphism $\nu_2$ is induced by a rotation $h_{\nu_2}$ by $\pi$ around an axis which pierces $\partial T$ in two points on the orange meridian, two points on the purple meridian, and two points on the core of $T$, as indicated by black dashes in Figure~\ref{D4}.  Now $\langle h_{\nu_1}, h_{\nu_2}\rangle$ induces $D_4=\langle \nu_1,\nu_2 \rangle$ on $\Gamma_{D4}$.  Thus $D_4\leq \mathrm{TSG}_+(\Gamma_{D_4})$. If  $\mathrm{TSG}_+(\Gamma_{D_4}) \not= D_4$, then $\mathrm{TSG}_+(\Gamma_{D_4})$ would contain a group which has $D_4$ as a maximal subgroup.  However, the only subgroups of $\mathrm{Aut}(P(10,3))=S_5 \times \mathbb{Z}_2$ which contain $D_4$ as a maximal subgroup are $D_4 \times \mathbb{Z}_2$ and $S_4$, and we saw in the proof of Proposition~\ref{consequences246} that each of these groups contains an automorphism which is not positively realizable for $P(10,3)$.  It follows that $\mathrm{TSG}_+(\Gamma_{D_4}) = D_4$.  \end{proof}

\begin{proposition} \label{A5_edge_embed}
$A_5 \times \mathbb{Z}_2$  and $A_5$ are positively realizable and hence realizable for $P(10,3)$.
\end{proposition}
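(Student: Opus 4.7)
The plan is to construct an embedding $\Gamma$ of $P(10,3)$ with $\mathrm{TSG}_+(\Gamma) = A_5 \times \mathbb{Z}_2$ using the Edge Embedding Lemma, and then apply the Subgroup Theorem (Theorem~\ref{SubgroupTheorem}) to pass to $A_5$ and indeed all subgroups of $A_5 \times \mathbb{Z}_2$.

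First, I would realize $G := A_5 \times \mathbb{Z}_2$ as a finite subgroup of $SO(4)$ in the same spirit as the $P(10,2)$ construction from the previous section.  Place a regular dodecahedron $D$ centered at the origin in $\mathbb{R}^3 \subset S^3$.  The rotation group $A_5$ of $D$ embeds in $SO(3)\leq SO(4)$ and commutes with the antipodal map $-I \in SO(4)$; since $D$ is centrally symmetric, $G = A_5 \times \langle -I\rangle$ permutes the $20$ vertices of $D$ transitively, with vertex stabilizer of order $6$.  Because $A_5 \times \mathbb{Z}_2$ is a subgroup of $\mathrm{Aut}(P(10,3)) = S_5 \times \mathbb{Z}_2$ that also acts transitively on the $20$ vertices of $P(10,3)$ with an order-$6$ stabilizer, I would label the vertices of $D$ as $u_1,\dots,u_{10},v_1,\dots,v_{10}$ so that the $G$-action on $D$ coincides with the action of this chosen subgroup on $P(10,3)$.

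Next I would verify the four hypotheses of Lemma~\ref{edgeembeddinglemma}.  The only non-trivial elements of $G$ that fix any vertex are the order-$3$ rotations of $D$, each fixing a single antipodal pair of vertices on its axis, so condition~(1) is automatic.  For condition~(2), I would enumerate the involutions in $G$—the order-$2$ rotations about edge-midpoint axes of $D$ and the elements of the form $(-I)g$—and check using the labeling that none swaps the endpoints of a $P(10,3)$-edge.  Condition~(3) follows once the labeling is arranged so that antipodal vertex pairs of $D$ are not $P(10,3)$-adjacent (the order-$3$ axis then contains no edge of $\Gamma$).  Condition~(4) is handled by drawing each remaining adjacent pair with a short arc in a single component of $S^3\setminus Y$.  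The lemma then produces a $G$-invariant embedding $\Gamma$ with $A_5 \times \mathbb{Z}_2 \leq \mathrm{TSG}_+(\Gamma)$.

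To get equality, I would use $\mathrm{TSG}_+(\Gamma) \leq \mathrm{Aut}(P(10,3)) = S_5 \times \mathbb{Z}_2$ and the fact that $A_5 \times \mathbb{Z}_2$ has index $2$ there; the only strictly larger possibility is $S_5 \times \mathbb{Z}_2$ itself, which was already shown above not to be positively realizable.  Hence $\mathrm{TSG}_+(\Gamma) = A_5 \times \mathbb{Z}_2$.  Finally, since no edge of $\Gamma$ meets the fixed-point set of any non-trivial element of $\mathrm{TSG}_+(\Gamma)$ in more than isolated points—the only such fixed sets are axes of order-$3$ rotations, each containing a single non-adjacent vertex pair—Theorem~\ref{SubgroupTheorem} implies that every subgroup of $A_5\times\mathbb{Z}_2$, including $A_5$, is positively realizable.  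The main obstacle in this plan is the combinatorial verification of condition~(2): unlike the $P(10,2)$ case where the dodecahedron's edges are exactly the graph edges, here $P(10,3)$ has a different edge structure, so one must carefully pin down the labeling of the $20$ vertices and check each involution of $G$ separately.
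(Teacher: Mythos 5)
Your outline (equivariantly place the $20$ vertices, invoke the Edge Embedding Lemma, cut down from $S_5\times\mathbb{Z}_2$ by maximality, then apply Theorem~\ref{SubgroupTheorem}) is reasonable, but the construction fails at its first step: no labeling of the $20$ vertices of the dodecahedron by the vertices of $P(10,3)$ makes the icosahedral group act as a subgroup of $\mathrm{Aut}(P(10,3))$. The rotation group $A_5$ of the dodecahedron is already transitive on its $20$ vertices (vertex stabilizer $\mathbb{Z}_3$). By contrast, $P(10,3)$ is the Desargues graph, i.e.\ the bipartite Kneser graph on the $2$-subsets and $3$-subsets of $\{1,\dots,5\}$, and the subgroup $A_5\leq S_5\leq \mathrm{Aut}(P(10,3))$ preserves that bipartition, so it acts on the $20$ vertices with two orbits of size $10$; only the central involution interchanges them. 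Since $A_5\times\mathbb{Z}_2$ has a unique subgroup isomorphic to $A_5$, any identification of the geometric action with the graph action would have to match these two $A_5$-actions, one of which is transitive and the other not. Hence the equivariant labeling you posit does not exist, and the verification of conditions (1)--(4) never gets started. (Two secondary problems: even for the dodecahedron itself the vertex stabilizer in the full icosahedral group has order $6$ and contains three reflections $(-I)r$ each fixing four vertices, so your claim that only order-$3$ rotations fix vertices is wrong; and $-I$ on $\mathbb{R}^3$ is orientation reversing, so the central $\mathbb{Z}_2$ must be realized by something like the equatorial rotation of $S^3$ used in the $P(10,2)$ section, not by the antipodal map.)

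The paper avoids all of this by choosing a vertex model whose two $A_5$-orbits of size $10$ match the graph's bipartition: half the vertices are embedded as midpoints of the $10$ edges of a regular $4$-simplex $\Omega\subset S^3$ and the other half as centers of its $10$ faces, joined by straight segments to form $\Lambda$; the central $\mathbb{Z}_2$ is realized by a glide rotation interchanging $\Omega$ with its dual $\Omega'$, and $\mathrm{TSG}_+(\Lambda)=A_5\times\mathbb{Z}_2$ follows from maximality together with the non-positive-realizability of $S_5\times\mathbb{Z}_2$, just as you propose. For $A_5$ the paper does not use the Subgroup Theorem: it adds the non-invertible knot $8_{17}$ to every edge of $\Lambda$, oriented from the $\Omega$-side vertices to the $\Omega'$-side vertices, which excludes exactly the automorphisms swapping the two vertex classes. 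To repair your argument you would need to replace the dodecahedron by a $G$-invariant vertex set on which $A_5$ has two orbits of size $10$.
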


\begin{proof} Let $\Omega$ denote a regular $4$-simplex embedded in $S^3$.   The group $G$ of orientation preserving isometries of $(S^3,\Omega)$ is $A_5$.   We embed half of the vertices of $P(10,3)$ as midpoints of the edges of $\Omega$ (as illustrated on the left of Figure~\ref{simplex}) and the other half as center points of the faces of $\Omega$.  Then we connect vertices with straight edges to obtain an embedding $\Lambda$ of $P(10,3)$.  We illustrate $\Lambda$ in an unfolded picture of $\Omega$ in the center of Figure~\ref{simplex}. Since $\Lambda$ is symmetrically embedded in $\Omega$, it is setwise invariant under any isometry of $\Omega$, in particular under $G$.  

\begin{figure}[h!]
\begin{center}
\centering\includegraphics[scale=.25]{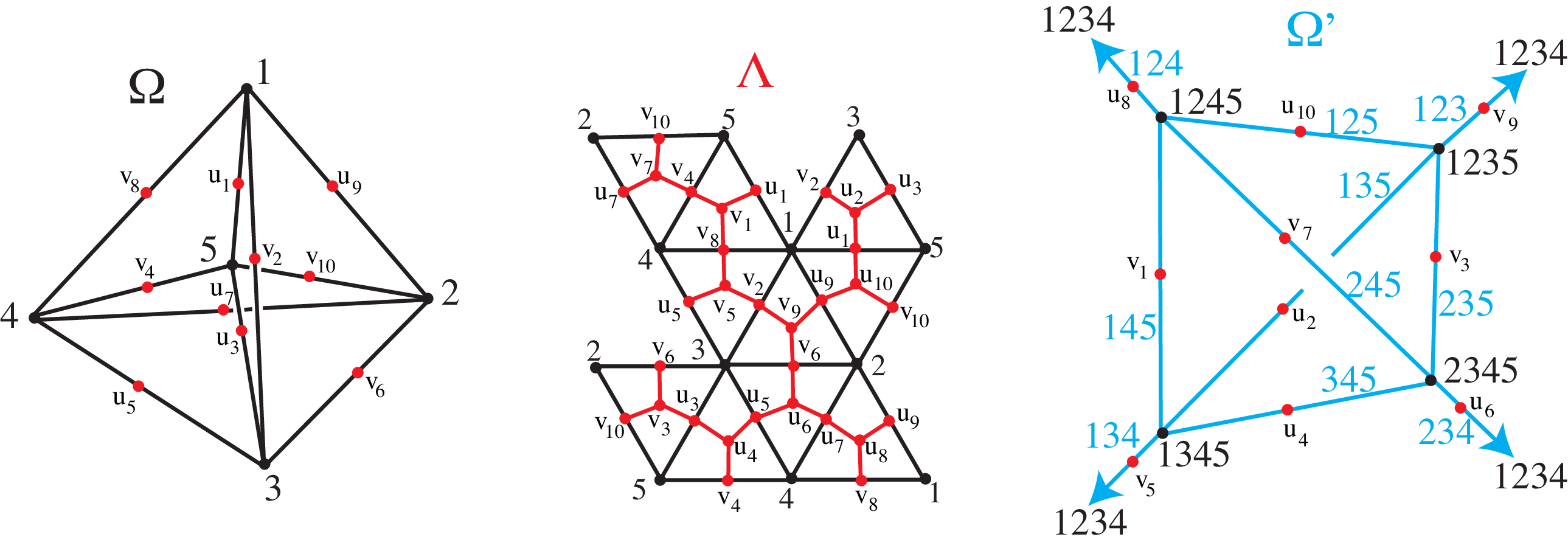}
\end{center}
\caption{The $4$-simplex on the left is $\Omega$.  The red graph in the center is the embedding $\Lambda$ of $P(10,3)$ in an unfolded picture of $\Omega$ such that $ \mathrm{TSG}_+(\Lambda)=A_5 \times \mathbb{Z}_2$.  On the right is the $4$-simplex $\Omega'$ which is dual to $\Omega$.}
\label{simplex}
\end{figure}

We define the dual $4$-simplex $\Omega'$ in $S^3$ as follows.  We embed a vertex of $\Omega'$ as the center point of each tetrahedron of $\Omega$ and we embed each edge of $\Omega'$ as a straight segment joining two vertices.  Then each edge of $\Omega'$ will pass through the center point of a face of $\Omega$.  Next we embed each face of $\Omega'$ as the triangle bounded by  three pairwise adjacent edges of $\Omega'$.  Then the midpoint of each edge of $\Omega$ will pass through the center point of a face of $\Omega'$.  Finally, we define a tetrahedron of $\Omega'$ as the solid bounded by four pairwise adjacent faces of $\Omega'$.  Thus each vertex of $\Omega$ will be the center point of a tetrahedron of $\Omega'$.  In Figure~\ref{simplex}, we illustrate  $\Omega'$ in blue on the right.  We label each vertex of $\Omega'$ by the tetrahedron of $\Omega$ which it is the center of, and we label each edge of $\Omega'$ by the face of $\Omega$ which it intersects.  Note that in the figure the vertex $1234$ of $\Omega'$ is at the point at $\infty$.  We see from the center image that half of the vertices of $\Lambda$ are on edges of $\Omega$ and the other half are on edges of $\Omega'$.  Also, all of the edges of $\Lambda$ go between $\Omega$ and $\Omega'$.

Now there is a glide rotation $h$ of $S^3$ which interchanges $\Omega$ and $\Omega'$, interchanging each vertex $i\in\Omega$ with the vertex $jklm\in\Omega'$ such that $i\not\in \{j,k,l,m\}$, and $h$ takes $\Lambda$ to itself inducing the automorphism $$(u_1u_6)(u_2u_7)(u_3u_8)(u_4u_9)(u_5u_{10})(v_1v_6)(v_2v_7)(v_3v_8)(v_4v_9)(v_5v_{10}).$$  Since $h$ commutes with every element of the group $G$ of isometries,  $H=\langle G, h\rangle=A_5 \times \mathbb{Z}_2$, and $H$ leaves $\Lambda$ setwise invariant.  Hence $A_5 \times \mathbb{Z}_2\leq \mathrm{TSG}_+(\Lambda)$.  Now since $A_5 \times \mathbb{Z}_2$ is a maximal subgroup of $S_5 \times \mathbb{Z}_2$ and $S_5 \times \mathbb{Z}_2$ is not positively realizable for $P(10,3)$, we have $A_5 \times \mathbb{Z}_2= \mathrm{TSG}_+(\Lambda)$.  Thus $A_5 \times \mathbb{Z}_2$ is positively realizable.

We obtain a new embedding $\Lambda'$ by adding the non-invertible knot $8_{17}$ to every edge of $\Lambda$ such that the orientation of the knots goes from the vertices on edges of $\Omega$ towards the vertices on edges of $\Omega'$.  Then $\Lambda'$ is setwise invariant under the group $G$ of orientation preserving isometries of $(S^3,\Omega)$, but no element of $\mathrm{TSG}_+(\Lambda)$ interchanges the sets of vertices on edges of $\Omega$ with those on edges of $\Omega'$.  It follows that $ \mathrm{TSG}_+(\Lambda')=A_5$.  Hence, $A_5 $ is positively realizable for $P(10,3)$. \end{proof}

\begin{proposition} \label{P103_D6_pos}
$D_6$ and all of its subgroups are positively realizable and hence realizable for $P(10,3)$.
\end{proposition}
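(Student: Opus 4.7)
The plan is to build on the embedding $\Lambda$ of $P(10,3)$ from Proposition~\ref{A5_edge_embed}, where $\mathrm{TSG}_+(\Lambda)=A_5\times\mathbb{Z}_2$ is induced by the isometry group $H=\langle G,h\rangle$, with $G\cong A_5$ the rotation group of the regular $4$-simplex $\Omega$ and $h$ the duality isometry interchanging $\Omega$ and its dual $\Omega'$.  Inside $G$ let $K$ denote the setwise stabilizer of the edge $\{4,5\}$ of $\Omega$: the group $K$ is isomorphic to $S_3$ and permutes the vertices $\{1,2,3\}$ of $\Omega$.  Since $h$ commutes with every element of $G$, the subgroup $H'=K\times\langle h\rangle$ is isomorphic to $S_3\times\mathbb{Z}_2\cong D_6$.

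Next I would mark the pair of vertices of $P(10,3)$ consisting of $u^{*}$, the midpoint of the edge $\{4,5\}$ of $\Omega$, and $v^{*}=h(u^{*})$, which coincides with the center of the opposite face $\{1,2,3\}$ of $\Omega$.  Modifying $\Lambda$ by adding an identical $4_1$ knot to each of the six edges of $P(10,3)$ incident to $u^{*}$ or $v^{*}$ yields an embedding $\Lambda_{D_6}$ which is setwise invariant under $H'$, so $D_6\leq\mathrm{TSG}_+(\Lambda_{D_6})$.  Since every orientation-preserving homeomorphism of $(S^3,\Lambda_{D_6})$ is also a homeomorphism of $(S^3,\Lambda)$, we also have $\mathrm{TSG}_+(\Lambda_{D_6})\leq A_5\times\mathbb{Z}_2$.

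The key step, and the main obstacle, is the reverse inclusion $\mathrm{TSG}_+(\Lambda_{D_6})\leq D_6$.  Because the six knotted edges are exactly the edges incident to $\{u^{*},v^{*}\}$, any orientation-preserving homeomorphism of $(S^3,\Lambda_{D_6})$ must preserve this pair setwise.  One then checks that the setwise stabilizer of $\{u^{*},v^{*}\}$ in $A_5\times\mathbb{Z}_2$ coincides with $H'$: an element of $A_5$ fixes $u^{*}$ iff it preserves the edge $\{4,5\}$ of $\Omega$ (which gives precisely $K$), and the non-trivial coset $K\cdot h$ supplies exactly those elements that swap $u^{*}$ and $v^{*}$.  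This verification matters because one of the proper overgroups of $D_6$ in $\mathrm{Aut}(P(10,3))$, namely $A_5$ itself, is positively realizable by Proposition~\ref{A5_edge_embed}; the added knots must genuinely single out the pair $\{u^{*},v^{*}\}$ for the argument to go through.

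Finally, I would exhibit an edge of $\Lambda_{D_6}$ whose two endpoints are not both fixed by any non-trivial element of $D_6$.  By Smith Theory the fixed set of any non-trivial finite-order orientation-preserving homeomorphism of $S^3$ is either $S^1$ or empty, so it is enough to take an edge of $P(10,3)$ disjoint from the $H'$-fixed circles; Theorem~\ref{SubgroupTheorem} then concludes that every subgroup of $D_6$ is positively realizable for $P(10,3)$.
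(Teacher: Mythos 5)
Your construction (taking $H'=K\times\langle h\rangle$ inside the simplex group of Proposition~\ref{A5_edge_embed} and knotting the six edges incident to $\{u^*,v^*\}$) is genuinely different from the paper's, which builds a separate hexagonal-tube embedding $\Gamma_{D6}$ and realizes $D_6$ by an order-$6$ glide rotation and an involution. However, the step you yourself identify as the main obstacle does not hold as written. The assertion that $\mathrm{TSG}_+(\Lambda_{D_6})\leq A_5\times\mathbb{Z}_2$ ``since every orientation-preserving homeomorphism of $(S^3,\Lambda_{D_6})$ is also a homeomorphism of $(S^3,\Lambda)$'' is false: $\Lambda_{D_6}$ and $\Lambda$ are different subsets of $S^3$, and adding knots never yields an a priori inclusion of the new topological symmetry group into the old one. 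What the knots actually give is that every element of $\mathrm{TSG}_+(\Lambda_{D_6})$ preserves the set of six knotted edges and hence the pair $\{u^*,v^*\}$; but the relevant stabilizer must then be computed in all of $\mathrm{Aut}(P(10,3))=S_5\times\mathbb{Z}_2$, not in $A_5\times\mathbb{Z}_2$. That full stabilizer is $(S_3\times S_2)\times\langle h\rangle$, of order $24$ and isomorphic to $D_6\times\mathbb{Z}_2$, which strictly contains $H'$, so the knots alone only pin $\mathrm{TSG}_+(\Lambda_{D_6})$ between $D_6$ and a group of twice its order. (A secondary slip: $A_5$ is not an overgroup of $D_6$, since the only order-$12$ subgroup of $A_5$ is $A_4$; the overgroup you actually need to exclude is $D_6\times\mathbb{Z}_2$.)

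The gap is repairable, and in exactly the style the paper uses at the corresponding point of its own proof: $D_6\times\mathbb{Z}_2$ is not positively realizable for $P(10,3)$ by Proposition~\ref{consequences246}, and there is no subgroup strictly between your $D_6$ and the order-$24$ stabilizer, so $\mathrm{TSG}_+(\Lambda_{D_6})=D_6$. (The paper's version of this move is to note that the only positively realizable proper overgroup of $D_6$ is $A_5\times\mathbb{Z}_2$, which cannot preserve its distinguished pair of hexagons because it contains an element of order $5$.) Your final step also needs more care: Theorem~\ref{SubgroupTheorem} requires an edge whose two endpoints are not both fixed by any non-trivial element of $\mathrm{TSG}_+(\Lambda_{D_6})$, and some involutions in $H'$ (for instance the one acting as $(1\,2)(4\,5)$ on the simplex) do fix both endpoints of certain edges, so the edge must be exhibited and checked against each conjugacy class rather than chosen ``off the fixed circles'' in the abstract.
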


\begin{proof}
$D_6=\langle \theta_1,\theta_2 \rangle$ where
\begin{align*}
\theta_1 &= (u_1 u_4 v_{10} u_6 u_9 v_5)(u_2 v_4 v_3 u_7 v_9 v_8)(u_3 v_7 v_6 u_8 v_2 v_1)(u_5 u_{10}) \\
\theta_2 &=(u_9 u_1)(u_4 u_6)(u_2 u_8)(u_3 u_7)(v_9 v_1)(v_4 v_6)(v_2 v_8)(v_3 v_7)
\end{align*}

Let $\Gamma_{D6}$ be the embedding in Figure \ref{fig_D6_embed} with $u_{10}$ at $\infty$.  The automorphism $\theta_1$ is induced by an order $6$ glide rotation $h_1$ which rotates $\Gamma_{D6}$ by $\pi/3$ around the vertical axis through $u_5$ and $u_{10}$, while rotating by $\pi$ around a meridian of the hexagonal tube interchanging $u_5$ and $u_{10}$.  The automorphism $\theta_2$ is induced by the involution $h_2$ which rotates $\Gamma_{D6}$ by $\pi$ around an axis through $u_{10}$, $v_{10}$, $u_5$, and $v_5$.  
 
  \begin{figure}[h!]
\begin{center}
\centering\includegraphics[scale=.3]{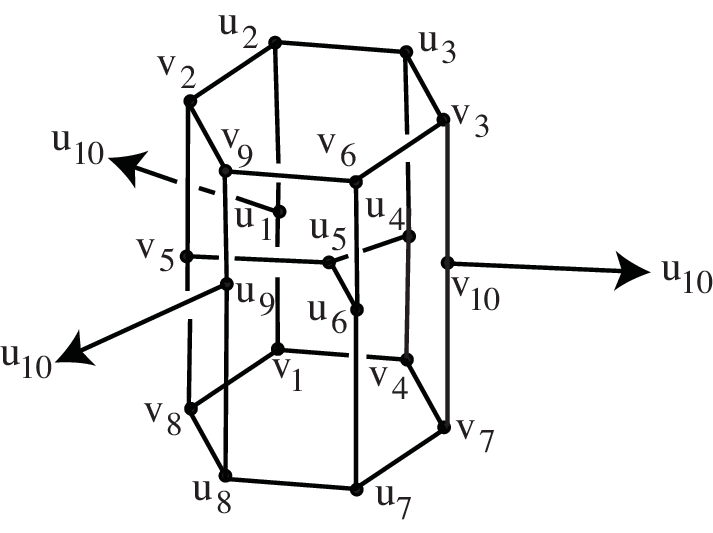}
\end{center}
\caption{ The embedding $\Gamma_{D6}$ of $P(10,3)$ such that $\mathrm{TSG}_+(\Gamma_{D6}')=D_6$, where $v_{10}$ is the point at $\infty$.}
\label{fig_D6_embed}
\end{figure}

Now we create a new embedding $\Gamma_{D6}'$ by adding the $4_1$ knot to the edges of the hexagons at the top and bottom of $\Gamma_{D6}$.  Then $D_6=\langle \theta_1,\theta_2 \rangle \leq \mathrm{TSG}_+(\Gamma_{D6}')$ and every homeomorphism of $(S^3,\Gamma_{D6}')$ leaves this pair of hexagons setwise invariant.   The only positively realizable subgroup of $\mathrm{Aut}(P(10,3))$ containing $D_6$ as a proper subgroup is $A_5 \times \mathbb{Z}_2$.  However, $A_5 \times \mathbb{Z}_2$ contains an element of order $5$, which cannot leave a pair of hexagons setwise invariant.  Thus $D_6=\mathrm{TSG}_+(\Gamma_{D6}')$, and hence $D_6$ is positively realizable.

Finally, observe that the edge $\overline{v_2v_5}$ is not fixed by any non-trivial element of $\mathrm{TSG}_+(\Gamma_{D6}')$.  Thus we can apply Theorem~\ref{SubgroupTheorem} to conclude that every subgroup of $D_6$ is positively realizable for $P(10,3)$.\end{proof}

\begin{proposition}  \label{A4Z2_pos_real}
$A_4 \times \mathbb{Z}_2$ and all of its subgroups are positively realizable and hence realizable for $P(10,3)$.
\end{proposition}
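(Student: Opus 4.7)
The plan is to produce an embedding $\Gamma$ of $P(10,3)$ with $\mathrm{TSG}_+(\Gamma)=A_4\times\mathbb{Z}_2$ by decorating the embedding $\Lambda$ from Proposition~\ref{A5_edge_embed}, following the same two-inscribed-tetrahedra strategy that worked for $P(10,2)$ in Proposition~\ref{A4xZ2realizable}. Recall that $\mathrm{TSG}_+(\Lambda)=A_5\times\mathbb{Z}_2$ is induced by the isometry group $H=\langle G,h\rangle$, where $G\cong A_5$ acts as rotations of the regular $4$-simplex $\Omega$ and the glide rotation $h$ swaps $\Omega$ with its dual $\Omega'$, sending each vertex $i$ of $\Omega$ to the vertex $jklm=\{1,\dots,5\}\setminus\{i\}$ of $\Omega'$. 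The stabilizer of the pair $\{i,jklm\}$ in $H$ is $\mathrm{Stab}_G(i)\times\langle h\rangle\cong A_4\times\mathbb{Z}_2$, and the goal is to break the $A_5$-symmetry down to $A_4$ while preserving the commuting $\mathbb{Z}_2$ by distinguishing this pair via an $A_4\times\mathbb{Z}_2$-invariant decoration.

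To do this, I would fix $i$ (say $i=5$) and let $T_1$ denote the four midpoints of the edges of $\Omega$ incident to $i$, and $T_2$ the four centers of the faces of $\Omega$ not containing $i$. Each of $T_1$ and $T_2$ is a single $A_4$-orbit of vertices of $\Lambda$ and forms a regular tetrahedron in $S^3$. By chasing the correspondence between edges of $\Omega'$ through $jklm$ and faces of $\Omega$ not containing $i$, one sees that $h(T_1)=T_2$, so $T_1\cup T_2$ is a single $A_4\times\mathbb{Z}_2$-orbit of eight vertices. I would then define $\Gamma$ by adding an identical copy of the invertible knot $4_1$ to every edge of $\Lambda$ incident to a vertex of $T_1\cup T_2$. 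Because $4_1$ is invertible and the decorated edge-set is $A_4\times\mathbb{Z}_2$-invariant, the isometries realizing $A_4\times\mathbb{Z}_2$ on $\Lambda$ continue to realize it on $\Gamma$, giving $A_4\times\mathbb{Z}_2\leq \mathrm{TSG}_+(\Gamma)$.

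For the reverse containment, note that every homeomorphism of $(S^3,\Gamma)$ must permute the decorated edges and therefore preserve $T_1\cup T_2$ as a set of vertices of $P(10,3)$. Since any permutation of the four edges of $\Omega$ through $i$ must fix $i$ itself, the stabilizer of $T_1$, and hence of $T_1\cup T_2$, in the full automorphism group $\mathrm{Aut}(P(10,3))=S_5\times\mathbb{Z}_2$ is $\mathrm{Stab}_{S_5}(i)\times\langle h\rangle\cong S_4\times\mathbb{Z}_2$. Thus $\mathrm{TSG}_+(\Gamma)\leq S_4\times\mathbb{Z}_2$, and the only subgroup of $S_4\times\mathbb{Z}_2$ strictly containing $A_4\times\mathbb{Z}_2$ is $S_4\times\mathbb{Z}_2$ itself. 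Since $S_4\times\mathbb{Z}_2$ is not positively realizable for $P(10,3)$ by Proposition~\ref{consequences246}, we conclude $\mathrm{TSG}_+(\Gamma)=A_4\times\mathbb{Z}_2$.

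To pass to all subgroups, I would exhibit an edge of $\Gamma$ whose two endpoints are not both pointwise fixed by any non-trivial element of $A_4\times\mathbb{Z}_2$: every element of the coset $A_4\cdot h$ interchanges $\Omega$ with $\Omega'$ and so fixes no vertex of $\Lambda$ at all, while the $3$-cycles and double transpositions in $A_4$ each fix at most two vertices of $\Lambda$, so a generic edge of $P(10,3)$ suffices. Theorem~\ref{SubgroupTheorem} then yields positive realizability, and hence realizability, of every subgroup of $A_4\times\mathbb{Z}_2$. The central difficulty of the proof is the orbit-and-stabilizer bookkeeping — showing that $h(T_1)=T_2$ and that the stabilizer of $T_1\cup T_2$ in $S_5\times\mathbb{Z}_2$ is exactly $S_4\times\mathbb{Z}_2$; once this combinatorial picture is in place, the rest parallels the $P(10,2)$ argument in Proposition~\ref{A4xZ2realizable}.
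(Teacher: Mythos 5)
Your construction is essentially the paper's: the paper also starts from $\Lambda$ and breaks $A_5\times\mathbb{Z}_2$ down to the stabilizer of the pair $\{5,1234\}$ by adding $4_1$ knots (it decorates the six edges of $\Lambda$ running between the tetrahedron $\Omega_T=\overline{1234}$ and its dual $\Omega_T'$, rather than all edges incident to your eight vertices $T_1\cup T_2$, but the symmetry-breaking is the same). Your route to the upper bound --- stabilizer of $T_1\cup T_2$ is $S_4\times\mathbb{Z}_2$, which is not positively realizable by Proposition~\ref{consequences246}, and $A_4\times\mathbb{Z}_2$ has index $2$ in it --- is a legitimate variant of the paper's more direct identification of the relevant isometry group as the orientation-preserving isometries of the dual tetrahedron pair. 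So the first two thirds of your argument are fine.

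The gap is in your application of Theorem~\ref{SubgroupTheorem}. You claim that the double transpositions in $A_4$ ``fix at most two vertices of $\Lambda$,'' so that ``a generic edge suffices.'' This is false: the element $(12)(34)\in A_4=\mathrm{Stab}(5)\cap A_5$ fixes \emph{four} vertices of $\Lambda$ --- the midpoints of the edges $\overline{12}$ and $\overline{34}$ of $\Omega$ and the centers of the faces $\overline{125}$ and $\overline{345}$ --- and since the midpoint of $\overline{12}$ is adjacent in $\Lambda$ to the center of $\overline{125}$, this involution pointwise fixes two edges of $\Lambda$. So not every edge works, and ``generic'' is not a verification; Theorem~\ref{SubgroupTheorem} requires you to exhibit a specific edge and check it against every non-trivial element of the realized group. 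The paper does exactly this: it takes the edge $\overline{u_1u_2}$ and argues that any non-trivial element of $\mathrm{TSG}_+(\Lambda_T)$ fixing both $u_1$ and $u_2$ would have to interchange $\overline{u_2v_2}$ and $\overline{u_2u_3}$, which is impossible because one carries a $4_1$ knot and the other does not --- note that the knot decoration itself is doing work in this step, not just a fixed-vertex count. Your proof needs an analogous concrete check for a named edge of your $\Gamma$; as written, the subgroup half of the proposition is unproved.
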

\begin{proof}  Recall that $ \mathrm{TSG}_+(\Lambda)=A_5 \times \mathbb{Z}_2$ is induced by the group $H$ of orientation preserving isometries of $(S^3, \Omega\cup \Omega')$.  We create an embedding $\Lambda_T$ by adding the  $4_1$ knot to the edges $\overline{v_8v_1}$, $\overline{u_9u_{10}}$, $\overline{v_6v_3}$, $\overline{v_2u_2}$, $\overline{u_5u_4}$, $\overline{u_7v_7}$ of $\Lambda$.  Then $ \mathrm{TSG}_+(\Lambda_T)$ is the subgroup of $ \mathrm{TSG}_+(\Lambda)$ taking the set $\{\overline{v_8v_1}, \overline{u_9u_{10}}, \overline{v_6v_3}, \overline{v_2u_2}, \overline{u_5u_4}, \overline{u_7v_7}\}$ to itself.  Hence $ \mathrm{TSG}_+(\Lambda_T)$ is induced by the subgroup $H_T\leq H$ taking the pair of vertices $\{5, 1234\}$ of $\Omega\cup \Omega'$ to itself.  It follows that $H_T$ is the set of orientation preserving isometries of the pair of tetrahedra $\Omega_T$ and $\Omega_T'$ with vertices $1$, $2$, $3$, and $4$, and $1345$, $1245$, $1235$, and $2345$, respectively.  Thus $H_T=A_4\times\mathbb{Z}_2$.
 
 Now observe that if both vertices of the edge $\overline{u_1u_2}$ were fixed by a non-trivial automorphism in $\mathrm{TSG}_+(\Lambda_T)$, then the automorphism would be induced by an element of $H_T$ which interchanges $\overline{u_{2}v_{2}}$ and $\overline{u_{2}u_{3}}$.  But $\overline{u_{2}v_{2}}$ contains a knot and $\overline{u_{2}u_{3}}$ does not.  As this is impossible, we can apply Theorem~\ref{SubgroupTheorem} to conclude that every subgroup of $A_4\times\mathbb{Z}_2$ is positively realizable for $P(10,3)$.  \end{proof}

Next we prove that all of the subgroups of $\mathrm{Aut}(P(10,3))$ that are not positively realizable for $P(10,3)$, are in fact realizable.

\begin{proposition}
 $\mathrm{Aut}(P(10,3))=S_5 \times \mathbb{Z}_2$  and $S_5$ are realizable for $P(10,3)$.
\end{proposition}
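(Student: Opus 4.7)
For $S_5 \times \mathbb{Z}_2 = \mathrm{Aut}(P(10,3))$, I would reuse the embedding $\Lambda$ from Proposition~\ref{A5_edge_embed}. That proof only used the \emph{orientation-preserving} isometries of $(S^3, \Omega)$, which form $G \cong A_5$; but $\Lambda$ is built symmetrically (midpoints of edges of $\Omega$, centroids of faces of $\Omega$, straight geodesic edges), so it is in fact invariant under the full isometry group $\widetilde G \cong S_5$ of $(S^3, \Omega)$, whose non-$G$ coset consists of orientation-reversing reflections corresponding to the odd permutations of the five vertices of $\Omega$. A direct calculation on vertices shows $h$ commutes with every element of $\widetilde G$, giving $\mathrm{TSG}(\Lambda) \supseteq \langle \widetilde G, h\rangle \cong S_5 \times \mathbb{Z}_2 = \mathrm{Aut}(P(10,3))$, hence equality.

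For $S_5$, I would reuse the embedding $\Lambda'$ from the same proposition, where an $8_{17}$ knot is tied in each edge with orientation running from the $\Omega$-vertex to the $\Omega'$-vertex. We already know $\mathrm{TSG}_+(\Lambda') = A_5$. The new ingredient is that $8_{17}$ is \emph{negatively} amphichiral: non-invertible, yet equal to its mirror image with reversed orientation. Hence neither a reflection $r$ of $\Omega$ (which would mirror the oriented knot without reversing it) nor the interchange $h$ (which would reverse its orientation without mirroring it) preserves $\Lambda'$ alone, but the composite $rh$ performs both operations at once and is therefore a symmetry of $\Lambda'$. Since $rh$ is orientation-reversing, $\mathrm{TSG}(\Lambda') \supsetneq A_5$ has order $120$. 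The only order-$120$ subgroups of $S_5 \times \mathbb{Z}_2$ up to isomorphism are $S_5$ and $A_5 \times \mathbb{Z}_2$; the latter would contain the automorphism induced by $h$, which by the non-invertibility argument in Proposition~\ref{A5_edge_embed} is not in $\mathrm{TSG}(\Lambda')$. Therefore $\mathrm{TSG}(\Lambda') \cong S_5$.

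The main obstacle will be the careful orientation bookkeeping for the composite $rh$: one must cleanly separate the ambient orientation-reversal (which mirrors the knot without reversing its orientation) from the swap of $\Omega$- and $\Omega'$-vertices (which reverses the edge orientation, hence the knot's orientation, without mirroring it), and verify that \emph{negative} (not positive) amphichirality of $8_{17}$ is precisely the condition that makes $rh$ a symmetry of $\Lambda'$ while each of $r$ and $h$ individually fails.
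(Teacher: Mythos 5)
Your construction for $S_5\times\mathbb{Z}_2$ is exactly the paper's: $\Lambda$ is invariant under the full isometry group of the pair of dual $4$-simplices, which already induces all of $\mathrm{Aut}(P(10,3))$. For $S_5$ you also use the paper's embedding $\Lambda'$, but your analysis of it genuinely diverges. The paper asserts that $\Lambda'$ is invariant under \emph{all} isometries of $(S^3,\Omega)$ and that no homeomorphism of $(S^3,\Lambda')$ interchanges $\Omega$ and $\Omega'$, so the realized copy of $S_5$ is the ``untwisted'' subgroup $S_5\times\{1\}$ of bipartition-preserving automorphisms. You instead argue that a reflection $r$ alone fails (it carries the oriented local knot $K=8_{17}$ to its mirror $\bar K$ with the same edge orientation, and $8_{17}$ is not positively amphichiral), that $h$ alone fails (non-invertibility), but that $rh$ succeeds precisely because $8_{17}$ is negatively amphichiral; the realized copy of $S_5$ is then the graph of the sign map, $\{(g,\operatorname{sgn}g)\}\leq S_5\times\mathbb{Z}_2$. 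The two accounts cannot both hold (if $r$ and $rh$ both preserved $\Lambda'$, so would $h$), and your oriented local-knot bookkeeping is the one consistent with the stated symmetry type of $8_{17}$; since either subgroup is abstractly $S_5$, the realizability conclusion is the same either way, but your version makes explicit which coset of $A_5$ is actually picked up and why.

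One step to tighten: to exclude $A_5\times\mathbb{Z}_2$, i.e.\ to show the central automorphism $z$ induced by $h$ is not in $\mathrm{TSG}(\Lambda')$, you cite non-invertibility, but that only rules out an \emph{orientation-preserving} homeomorphism inducing $z$; an orientation-reversing homeomorphism inducing a bipartition-reversing automorphism needs only $K\cong \bar K^{r}$, which $8_{17}$ satisfies. Close this gap group-theoretically: if $z$ and some $(r,z)$ with $r$ odd were both in $\mathrm{TSG}(\Lambda')$, their product $(r,1)$ would be too, which is impossible since $(r,1)\notin\mathrm{TSG}_+(\Lambda')=A_5$ and an orientation-reversing homeomorphism inducing the bipartition-preserving $(r,1)$ would require $K\cong\bar K$, i.e.\ positive amphichirality of $8_{17}$.
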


\begin{proof} The embedding $\Lambda$, in Figure~\ref{simplex}, is invariant under the group $S_5 \times \mathbb{Z}_2$ of  isometries of the pair of dual $4$-simplices $\Omega$ and $\Omega'$, including a reflection taking each $4$-simplex to itself.   Since $\mathrm{Aut}(P(10,3))=S_5 \times \mathbb{Z}_2$, we have $\mathrm{TSG}(\Lambda)=S_5 \times \mathbb{Z}_2$.

Recall from the proof of Proposition~\ref{A5_edge_embed} that the embedding $\Lambda'$ was obtained from $\Lambda$ by adding the knot $8_{17}$ to every edge of $\Lambda$ oriented from vertices on edges of $\Omega$ to vertices on edges of $\Omega'$.  Then $\Lambda'$ is invariant under the isometries of $(S^3,\Omega)$, and no homeomorphism of $(S^3,\Lambda')$ can interchange $\Omega$ and $\Omega'$.  Thus $\mathrm{TSG}(\Lambda)=S_5$.\end{proof}

\begin{proposition} \label{S4_real}
 $S_4 \times \mathbb{Z}_2$ and $S_4$ are realizable for $P(10,3)$.
\end{proposition}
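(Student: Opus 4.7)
The plan is to upgrade the embedding $\Lambda_T$ from Proposition~\ref{A4Z2_pos_real}, in which the six edges $\overline{v_8v_1}$, $\overline{u_9u_{10}}$, $\overline{v_6v_3}$, $\overline{v_2u_2}$, $\overline{u_5u_4}$, $\overline{u_7v_7}$ of $\Lambda$ are decorated with the knot $4_1$.  Since $4_1$ is both invertible and amphichiral, it is preserved under every combination of mirror reflection and edge-reversal; therefore $\mathrm{TSG}(\Lambda_T)$ equals the \emph{full} stabilizer of these six edges in $\mathrm{TSG}(\Lambda)=S_5\times\mathbb{Z}_2$, not merely its orientation-preserving part $A_4\times\mathbb{Z}_2$.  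As in the proof of Proposition~\ref{A4Z2_pos_real}, that full stabilizer is the subgroup of isometries of $(S^3,\Omega\cup\Omega')$ fixing the pair of vertices $\{5,1234\}$, which is $S_4\times\mathbb{Z}_2$.  Thus $\mathrm{TSG}(\Lambda_T)=S_4\times\mathbb{Z}_2$, realizing $S_4\times\mathbb{Z}_2$.

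To realize $S_4$, I would further modify $\Lambda_T$ by adding the non-invertible, negative amphichiral knot $8_{17}$ to every edge of $\Lambda_T$, each oriented from its endpoint on an edge of $\Omega$ to its endpoint on a face-center of $\Omega$ (equivalently, on an edge of $\Omega'$); call the resulting embedding $\Lambda_T'$.  An element $(p,s)\in S_4\times\mathbb{Z}_2=\mathrm{TSG}(\Lambda_T)$ acts on each such oriented $8_{17}$ by mirror reflection iff $p$ reverses the orientation of $S^3$, and by direction-reversal iff $s$ is the swap $\sigma$ of $\Omega$ and $\Omega'$.  Because the mirror of $8_{17}$ equals its reverse but neither equals $8_{17}$ itself, the knot is preserved precisely when both operations occur together or neither does.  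The resulting subgroup of $S_4\times\mathbb{Z}_2$ is the index-$2$ twisted subgroup $\{(p,e):p\in A_4\}\cup\{(p,\sigma):p\in S_4\setminus A_4\}$, which has order $24$ and projects isomorphically onto $S_4$ via the first factor; hence $\mathrm{TSG}(\Lambda_T')=S_4$.

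The main obstacle is the orientation/swap bookkeeping for $8_{17}$: I must rely on the fact that $\sigma$ is orientation-preserving on $S^3$ (it is the glide rotation $h$ from Proposition~\ref{A5_edge_embed}) and that the reflections in $S_5$ acting on $\Omega$ are orientation-reversing on $S^3$ (forced by contrasting $\mathrm{TSG}_+(\Lambda)=A_5\times\mathbb{Z}_2$ with $\mathrm{TSG}(\Lambda)=S_5\times\mathbb{Z}_2$).  With these two facts in hand, the twisted-subgroup count is automatic, and both isomorphism classes $S_4\times\mathbb{Z}_2$ and $S_4$ are realizable for $P(10,3)$.
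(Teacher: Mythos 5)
Your first half coincides with the paper's: both use the embedding $\Lambda_T$ with $4_1$ knots on the six edges joining $\Omega_T$ to $\Omega_T'$, and both identify $\mathrm{TSG}(\Lambda_T)$ with $S_4\times\mathbb{Z}_2$ (the paper gets the lower bound from an explicit reflection of $(S^3,\Omega_T\cup\Omega_T')$ and the upper bound from maximality of $S_4\times\mathbb{Z}_2$ in $S_5\times\mathbb{Z}_2$ after noting that $\alpha$ carries a knotted edge to an unknotted one; your ``full stabilizer of the six edges'' phrasing packages the same two bounds). For $S_4$ your construction genuinely differs: the paper replaces only the six $4_1$'s by $8_{17}$'s oriented from $\Omega_T$ to $\Omega_T'$ and realizes the factor $S_4\times\{e\}$, whereas you decorate all of the edges and realize the index-two twisted copy $\{(p,e):p\in A_4\}\cup\{(p,\sigma):p\notin A_4\}\cong S_4$; both land on a subgroup isomorphic to $S_4$, and your accounting of the symmetry type of $8_{17}$ (non-invertible and negative amphichiral, so mirror $=$ reverse $\neq$ knot) is in fact more careful than the paper's literal claim that no homeomorphism of $\Gamma_{S4}$ can interchange $\Omega_T$ and $\Omega_T'$, which overlooks that an orientation-reversing swap preserves an oriented $8_{17}$. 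The one step you should make precise is the exclusion direction: ruling out, say, $(p,e)$ with $p$ odd requires knowing that no \emph{orientation-preserving} homeomorphism of the new embedding $\Lambda_T'$ induces it, and your justification (that such $p$ are orientation-reversing ``on $\Lambda$'') is a statement about a different embedding. This is closed by a short composition argument: any homeomorphism of $(S^3,\Lambda_T')$ inducing a given automorphism differs from the standard isometry inducing it by a homeomorphism inducing the identity, and an orientation-reversing homeomorphism inducing the identity would have to carry each oriented $8_{17}$ to its mirror with the same direction, which is a different knot. With that observation added, the orientation type of any realizing homeomorphism is forced, your twisted-subgroup count is valid, and both conclusions follow.
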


\begin{proof} Recall from the proof of Proposition~\ref{A4Z2_pos_real} that $\Lambda_{T}$ was obtained from $\Lambda$ by adding a $4_1$ knot to the edges going between the tetrahedron $\Omega_T$ with vertices $1,2,3,4$ and the dual tetrahedron $\Omega'_T$ with vertices $1245, 1345, 1235, 1245$.  Also $\mathrm{TSG}_+(\Lambda_T)$ is induced by the group $A_4 \times \mathbb{Z}_2$ of orientation preserving isometries of $(S^3,\Omega_T\cup\Omega'_T)$.  Now there is a reflection of $(S^3,\Omega_T\cup\Omega'_T)$ which pointwise fixes the sphere containing the vertices $u_9,v_{10},u_1, u_5$  and $u_8,v_7,u_2,u_6$.   Thus the full group of isometries of $(S^3,\Omega_T\cup\Omega'_T)$ including reflections is $S_4 \times \mathbb{Z}_2$.  Since these isometries induce a faithful action on $\Lambda_T$, it follows that $S_4 \times \mathbb{Z}_2\leq \mathrm{TSG}(\Lambda_T)$.

Let $\alpha = (u_1 u_2 u_3 u_4 u_5 u_6 u_7 u_8 u_9  u_{10})(v_1 v_2 v_3 v_4 v_5 v_6 v_7 v_8 v_9 v_{10})$.  Then $\alpha\in \mathrm{Aut}(P(10, 3))$ taking the edge $\overline{v_1v_8}$ to the edge $\overline{v_2v_9}$.  However, in the embedding $\Lambda_T$, the edge $\overline{v_1v_8}$ contains a knot while the edge $\overline{v_2v_9}$ does not.  Thus $\alpha$ is not contained in $\mathrm{TSG}(\Lambda_T)$, and hence $\mathrm{TSG}(\Lambda_T)\not=\mathrm{Aut}(P(10, 3))=S_5 \times \mathbb{Z}_2$.  Since, $S_4 \times \mathbb{Z}_2$ is a maximal subgroup of $S_5 \times \mathbb{Z}_2 $ it follows that $\mathrm{TSG}(\Lambda_T)=S_4 \times \mathbb{Z}_2$.

Finally, we obtain $\Gamma_{S4}$ from $\Lambda_T$ by replacing the invertible $4_1$ knots by non-invertible $8_{17}$ knots which are oriented from $\Omega_T$ to $\Omega_T'$.  Because of the orientation, no homeomorphism of $(S^3,\Gamma_{S4})$ can interchange $\Omega_T$ and $\Omega_T'$.  But all of the other elements of $\mathrm{TSG}(\Lambda_T)$ are also elements of $\mathrm{TSG}(\Gamma_{S4})$. Thus $\mathrm{TSG}(\Gamma_{S4})=S_4$.  \end{proof}

\begin{proposition} \label{D4_real} $D_4 \times \mathbb{Z}_2$ and $\mathbb{Z}_4 \times \mathbb{Z}_2$ are realizable for $P(10,3)$.  
\end{proposition}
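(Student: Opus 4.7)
The plan is to handle the two groups separately.

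For $\mathbb{Z}_4 \times \mathbb{Z}_2$: since $3^2 \equiv -1 \pmod{10}$, Theorem~\ref{TSG(k2=-1)} implies every subgroup of $B(10,3) = \mathbb{Z}_{10} \rtimes \mathbb{Z}_4 = \langle \rho, \alpha \mid \rho^{10} = \alpha^4 = e,\ \alpha \rho \alpha^{-1} = \rho^3\rangle$ is realizable for $P(10,3)$. Now $\alpha \rho^5 \alpha^{-1} = \rho^{15} = \rho^5$, so the order-$2$ element $\rho^5$ commutes with the order-$4$ element $\alpha$; since $\rho^5 \notin \langle \alpha\rangle$, this gives $\langle \rho^5, \alpha\rangle \cong \mathbb{Z}_2 \times \mathbb{Z}_4 \cong \mathbb{Z}_4 \times \mathbb{Z}_2$. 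Hence $\mathbb{Z}_4 \times \mathbb{Z}_2 \leq B(10,3)$ is realizable for $P(10,3)$.

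For $D_4 \times \mathbb{Z}_2$, note first that $|D_4 \times \mathbb{Z}_2| = 16 \nmid 40 = |B(10,3)|$, so this group is not contained in $B(10,3)$ and the previous argument does not apply. Instead, I would begin with the embedding $\Lambda$ from Proposition~\ref{A5_edge_embed}, which satisfies $\mathrm{TSG}(\Lambda) = S_5 \times \mathbb{Z}_2$ induced by the full isometry group of a pair of dual $4$-simplices $\Omega, \Omega'$ in $S^3$ (the $S_5$ factor acting by permutations of the vertices of $\Omega$, and the $\mathbb{Z}_2$ factor swapping $\Omega$ with $\Omega'$). Inside $S_5 \times \mathbb{Z}_2$, I take $D_4 \times \mathbb{Z}_2$ to be the subgroup whose $D_4$ part stabilizes the vertex $5$ of $\Omega$ and the unordered pair of opposite edges $\{\overline{13}, \overline{24}\}$, while the $\mathbb{Z}_2$ part is the $\Omega \leftrightarrow \Omega'$ swap. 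By Goursat's lemma, the only subgroups of $S_5 \times \mathbb{Z}_2$ properly containing $D_4 \times \mathbb{Z}_2$ are $S_4 \times \mathbb{Z}_2$ and $S_5 \times \mathbb{Z}_2$.

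Next I would add the achiral invertible knot $4_1$ to every edge in a single $D_4 \times \mathbb{Z}_2$-orbit of edges of $\Lambda$, chosen so that the orbit is not invariant under $S_4 \times \mathbb{Z}_2$; concretely, the orbit should both distinguish vertex $5$ from the other vertices of $\Omega$ and distinguish the pair $\{\overline{13}, \overline{24}\}$ from the two other opposite-edge pairs of $\Omega$, with the $\Omega \leftrightarrow \Omega'$ dual treated symmetrically. Because $4_1$ is achiral and invertible, the $D_4 \times \mathbb{Z}_2$ action remains a symmetry of the resulting embedding $\Gamma$, so $D_4 \times \mathbb{Z}_2 \leq \mathrm{TSG}(\Gamma)$. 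Conversely, any permutation of $\{1,\ldots,5\}$ outside $D_4$ either moves vertex $5$ or re-pairs the opposite edges, and hence would send some knotted edge of $\Gamma$ to an unknotted one. Combined with the Goursat enumeration, this yields $\mathrm{TSG}(\Gamma) = D_4 \times \mathbb{Z}_2$.

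The main obstacle is selecting a representative edge whose $D_4 \times \mathbb{Z}_2$-orbit achieves both distinctions simultaneously, that is, ensuring the orbit is not stabilized by any element of $S_4 \times \mathbb{Z}_2$ or $S_5 \times \mathbb{Z}_2$ outside of $D_4 \times \mathbb{Z}_2$. This reduces to a combinatorial check on the explicit correspondence between vertices of $\Lambda$ and the edge/face data of $\Omega$, which can be verified directly or with Sage.
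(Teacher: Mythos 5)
Your proposal is correct, and it departs from the paper's proof in instructive ways. For $\mathbb{Z}_4\times\mathbb{Z}_2$ your route is genuinely different and much shorter: the paper constructs a new embedding $\Gamma_{Z4}$ by installing an ``ordered'' knotted neighborhood at $u_4$ and its orbit and then ruling out the extra generator $\gamma_2$, whereas you simply note that $\rho^5$ is a central involution of $B(10,3)=\mathbb{Z}_{10}\rtimes\mathbb{Z}_4$, so $\langle\rho^5,\alpha\rangle\cong\mathbb{Z}_4\times\mathbb{Z}_2$ and Theorem~\ref{TSG(k2=-1)} finishes it; since $\mathbb{Z}_4\times\mathbb{Z}_2$ is the centralizer of any of its order-$4$ elements in $S_5\times\mathbb{Z}_2$, there is a single conjugacy class, so this realizes the same group the paper does (and shows the paper's separate construction is not strictly needed for realizability, as opposed to positive realizability). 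For $D_4\times\mathbb{Z}_2$ your strategy matches the paper's in spirit --- knot edges of the $4$-simplex embedding $\Lambda$ so that the setwise stabilizer of the knotted edges is exactly $D_4\times\mathbb{Z}_2$, then use that the only overgroups in $S_5\times\mathbb{Z}_2$ are $S_4\times\mathbb{Z}_2$ and $S_5\times\mathbb{Z}_2$ --- but differs in execution: the paper works in two stages, first putting $4_1$ knots on the six edges between the dual tetrahedra $\Omega_T$ and $\Omega_T'$ to cut down to $S_4\times\mathbb{Z}_2$, then upgrading two of those knots to $5_2$ to cut down to $D_4\times\mathbb{Z}_2$, while you knot a single well-chosen orbit. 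The one step you defer, the existence of such an orbit, does hold: identifying the edges of $\Lambda$ with incidences $(e,F)$ where $e$ is an edge of $\Omega$ contained in a face $F$, and taking $D_4=\langle(1\,2\,3\,4),(1\,3)\rangle$, the $D_4\times\mathbb{Z}_2$-orbit of $(\overline{12},\overline{125})$ consists of the four incidences $(\overline{i(i+1)},\overline{i(i+1)5})$ for $i=1,2,3,4$ mod $4$; it is preserved by the $\Omega\leftrightarrow\Omega'$ swap, which sends $(e,F)$ to $(F^c,e^c)$, but not by $(1\,2)$ or $(4\,5)$, so its stabilizer is exactly $D_4\times\mathbb{Z}_2$. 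With that check written out (and the standard remark that the equivariant insertion of the achiral, invertible $4_1$ is legitimate because the group acts by isometries of $(S^3,\Lambda)$), your argument is complete.
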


\begin{proof}  We again start with the embedding $\Lambda_T$, but now we replace the $4_1$ knots on the edges $\overline{u_7v_7}$ and $\overline{u_2v_2}$ by the achiral and invertible knot $5_2$ to obtain an embedding $\Gamma_{D4}$ such that $\mathrm{TSG}(\Gamma_{D4})$ leaves  $\{\overline{v_7u_7}, \overline{u_2v_2}\}$ and $\{\overline{v_8v_1}, \overline{u_9u_{10}}, \overline{v_6v_3}, \overline{u_5u_4}\}$ setwise invariant.   Then any homeomorphism of $(S^3, \Omega_T \cup \Omega_T')$  which leaves $\Gamma_{D4}$ setwise invariant either interchanges the squares $S=\overline{1234}\subseteq \Omega_T$ and 

\noindent $S'= \overline{1245, 1235, 2345, 1345}\subseteq \Omega_T'$ or leaves each square setwise invariant.

Consider the following automorphisms of $P(10,3)$.
\begin{align*}
\gamma_1 &=(u_5 v_8 u_9 v_6)(u_1 v_{10} u_3 v_4)(u_7 v_2)(v_5 u_8 v_9 u_6)(v_1 u_{10} v_3 u_4)(u_2 v_7) \\
\gamma_2 &=(u_1 v_{10})(u_2 v_7)(u_3 v_4)(u_6 v_5)(u_7 v_2)(u_8 v_9)(v_1 v_3)(v_6 v_8) \\
\gamma_3 &=(v_3 v_8)(v_1 v_6)(v_9 v_4)(v_7 v_2)(v_5 v_{10})(u_3 u_8)(u_1 u_6)(u_9 u_4)(u_7 u_2)(u_5 u_{10})
\end{align*}

\smallskip

Since $\Omega_T$ and $\Omega'_T$ are dual regular tetrahedra in $S^3$, there is a reflection composed with an order $4$ rotation $g$ of $(S^3, \Omega_T, \Omega_T')$ which rotates $S$  and $S'$ interchanging the edges $\overline{13}$ and $\overline{24}$ and interchanging the edges $\overline{245}$ and $\overline{135}$.   Also, there is an order $2$ rotation $f$ of $(S^3, \Omega_T, \Omega_T')$, which turns over the square $S$ interchanging the edges $\overline{14}$ and $\overline{23}$, and turns over the square $S'$ interchanging the edges $\overline{145}$ and $\overline{235}$.  Now, $g$ and $f$ leave $\Gamma_{D4}$ setwise invariant inducing $\gamma_1$ and $\gamma_2$, respectively. Since $g$ rotates $S_1$ and $S_2$ and $f$ turns $S_1$ and $S_2$ over, $\langle g, f\rangle=D_4$.  Finally, since the induced action on $\Gamma_{D4}$ is faithful, $\langle \gamma_1, \gamma_2\rangle=D_4$

Recall from the proof of Proposition~\ref{A5_edge_embed} that there is an order $2$ glide rotation $h$ of $S^3$ which interchanges $\Omega$ and $\Omega'$, interchanging each vertex $i\in\Omega$ with the vertex $jklm\in\Omega'$ where  $i\not\in \{j,k,l,m\}$. Observe that $h$ interchanges $S$ and $S'$ and leaves $\Gamma_{D4}$ setwise invariant inducing $\gamma_3$.  Also, $\gamma_3$ commutes with $\gamma_1$ and $\gamma_2$, and hence $\langle\gamma_1,\gamma_2,\gamma_3\rangle= D_4\times \mathbb{Z}_2\leq \mathrm{TSG}(\Gamma_{D4})$.  However, since $\mathrm{TSG}(\Gamma_{D4})$ is a proper subgroup of $\mathrm{TSG}(\Lambda_{T})=S_4 \times \mathbb{Z}_2$, and $D_4\times \mathbb{Z}_2$ is maximal in $S_4 \times \mathbb{Z}_2$, it follows that $\mathrm{TSG}(\Gamma_{D4})=D_4\times \mathbb{Z}_2$.

Next, we replace a neighborhood of $u_4$ by one where each edge incident to $u_4$ is knotted around the next such edge as illustrated in Figure~\ref{Fig:orient}, except that now the chiral $3_1$ knot in the figure is replaced by the achiral $4_1$ knot.  As explained in the proof of Proposition~\ref{SL2}, this gives an order to the edges around $u_4$.  We then apply the isometry group $\langle g,h\rangle$ to the neighborhood $N(u_4)$, so that the edges in the orbit of $N(u_4)$ also have an order.  Since $g$ rotates $S_1$ and $S_2$, and $h$ is an involution interchanging $S_1$ and $S_2$, no vertex in the orbit of $u_4$ is fixed by a non-trivial element of $\langle g,h\rangle$.  Thus, replacing the original neighborhoods of these vertices by the new ones yields a well-defined embedding $\Gamma_{Z4}$ of $P(10,3)$ such that $\mathbb{Z}_4\times \mathbb{Z}_2=\langle\gamma_1,\gamma_3\rangle\leq \mathrm{TSG}(\Gamma_{Z4})$.  

Now suppose that a homeomorphism $f'$ of $(S^3,\Gamma_{Z4})$ induces $\gamma_2$.  Then $f'$ fixes the edge $\overline{u_5u_4}$ and interchanges $\overline{u_4u_3}$ and $\overline{u_4v_4}$.  But this means that $f'$ reverses the order of the edges incident to $u_4$, which is impossible since each of these edges is knotted around the next one.  Thus $\mathrm{TSG}(\Gamma_{Z4})$ is a proper subgroup of $\mathrm{TSG}(\Gamma_{D4})$.  Since $\mathbb{Z}_4\times \mathbb{Z}_2$ is maximal in $D_4\times \mathbb{Z}_2$, we must have $\mathrm{TSG}(\Gamma_{Z4})=\mathbb{Z}_4\times \mathbb{Z}_2$.\end{proof}

\begin{proposition}  \label{D6Z2_real}
 $D_6 \times \mathbb{Z}_2$ and $\mathbb{Z}_6 \times \mathbb{Z}_2$ are realizable for $P(10,3)$.
\end{proposition}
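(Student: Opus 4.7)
The plan is to establish both realizabilities by building on the embedding $\Gamma_{D6}'$ from Proposition~\ref{P103_D6_pos}, which has $\mathrm{TSG}_+(\Gamma_{D6}')=D_6$ induced by the isometries $h_1$ (order-$6$ glide rotation) and $h_2$ (axial involution). For $D_6\times\mathbb{Z}_2$, I would exhibit an orientation-reversing involution $r$ of $(S^3,\Gamma_{D6}')$ commuting with both $h_1$ and $h_2$. Since the $4_1$ knot decorating the hexagons is achiral, the natural candidate is a reflection through a sphere that contains the $h_1$-axis and meets the hexagonal tube in a plane of symmetry; its commutation with $h_1$ and $h_2$ follows from the placement of the rotation axes, and its preservation of the embedding follows from the achirality of $4_1$. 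This gives $D_6\times\mathbb{Z}_2=\langle h_1,h_2,r\rangle\leq \mathrm{TSG}(\Gamma_{D6}')$.

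For the matching upper bound, a subgroup-lattice check in $\mathrm{Aut}(P(10,3))=S_5\times\mathbb{Z}_2$ (verifiable via Sage) shows that the only proper over-group of $D_6\times\mathbb{Z}_2$ is $S_5\times\mathbb{Z}_2$ itself, because $D_6\cong S_3\times\mathbb{Z}_2$ does not embed in $S_4$ or $A_5$, and the centralizer of $S_3$ in $S_5$ has order $2$. Now every order-$5$ element of $\mathrm{Aut}(P(10,3))$ acts on the $20$ vertices with orbit structure $5+5+5+5$ (no fixed vertices), and hence cannot leave setwise invariant the $12$-vertex pair of knotted hexagons of $\Gamma_{D6}'$. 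Thus $\mathrm{TSG}(\Gamma_{D6}')=D_6\times\mathbb{Z}_2$, proving realizability of $D_6\times\mathbb{Z}_2$.

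For $\mathbb{Z}_6\times\mathbb{Z}_2$, I would modify $\Gamma_{D6}'$ so as to destroy $h_2$ while preserving $h_1$ and $r$. The idea is to impose a coherent cyclic ordering of the edges at each hexagon vertex (in the sense of Figure~\ref{Fig:orient}), decorated with the achiral knot $4_1$ rather than $3_1$ so that the reflection $r$ still preserves the modification. Because $h_1$ cyclically permutes the hexagon-edges in a consistent direction it preserves the new embedding $\Gamma_{Z6}$, whereas $h_2$ reverses the cyclic order at each hexagon-vertex it fixes, so $h_2$ no longer extends to a homeomorphism of $(S^3,\Gamma_{Z6})$. Thus $\mathbb{Z}_6\times\mathbb{Z}_2=\langle h_1,r\rangle\leq \mathrm{TSG}(\Gamma_{Z6})$. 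The only over-groups of $\mathbb{Z}_6\times\mathbb{Z}_2$ in $S_5\times\mathbb{Z}_2$ are $D_6\times\mathbb{Z}_2$ and $S_5\times\mathbb{Z}_2$ (since $\mathbb{Z}_6$ does not embed in $A_5$, $A_4$, or $S_4$), and both are excluded---the first by the ordering obstruction, the second by the hexagon argument---so $\mathrm{TSG}(\Gamma_{Z6})=\mathbb{Z}_6\times\mathbb{Z}_2$.

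The main obstacle I anticipate is the explicit construction of the reflection $r$ so that it commutes with the screw motion $h_1$ (whose vertical and meridional rotation components impose a specific constraint on any central reflection), and then checking that the cyclic-ordering decoration introduced in the $\mathbb{Z}_6\times\mathbb{Z}_2$ step is indeed preserved by $r$.
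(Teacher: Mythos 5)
Your argument for $D_6\times\mathbb{Z}_2$ is essentially the paper's: the paper adjoins to $\langle\theta_1,\theta_2\rangle$ a central element $\theta_3$ induced by a reflection through a sphere containing the vertices $u_{10},u_5,u_4,u_6,v_5,u_9,u_1$, and then uses exactly your upper-bound argument (the knotted pair of hexagons is invariant, order-$5$ elements cannot preserve it, and $D_6\times\mathbb{Z}_2$ is maximal in $S_5\times\mathbb{Z}_2$). That half is fine.

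For $\mathbb{Z}_6\times\mathbb{Z}_2$ you diverge from the paper, which simply replaces the $4_1$ knots on the two hexagons by coherently oriented non-invertible $8_{17}$ knots so that no homeomorphism can turn a hexagon over, killing $\theta_2$ while retaining $\theta_1$ and $\theta_3$. Your edge-ordering construction has a genuine gap: the mechanism you give for excluding $h_2$ is vacuous. The two knotted hexagons are $u_2v_2v_9v_6v_3u_3$ and $u_7v_7v_4v_1v_8u_8$, and $\theta_2=(u_9u_1)(u_4u_6)(u_2u_8)(u_3u_7)(v_9v_1)(v_4v_6)(v_2v_8)(v_3v_7)$ fixes only $u_5,u_{10},v_5,v_{10}$ --- none of which is a hexagon vertex --- so ``$h_2$ reverses the cyclic order at each hexagon-vertex it fixes'' excludes nothing. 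The construction itself is salvageable: $\langle\theta_1,\theta_3\rangle$ acts simply transitively on the $12$ hexagon vertices (so the decoration is well-defined and $\langle h_1,r\rangle$-invariant), and the stabilizer of $u_2$ in $D_6\times\mathbb{Z}_2$ is generated by $\sigma=\theta_2\theta_1^4\theta_3$, which lies in the nontrivial coset of $\mathbb{Z}_6\times\mathbb{Z}_2$, fixes $u_1$ and $u_2$, and swaps $u_3$ with $v_2$, hence reverses the cyclic order at $u_2$; excluding $\sigma$ excludes the whole coset of $\theta_2$. But that computation --- not the behavior of $h_2$ at its fixed vertices --- is the actual obstruction, and without it your proof of the upper bound for $\Gamma_{Z6}$ does not go through. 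You would also need to check that decorating the hexagon vertices (which knots the twelve edges leaving the hexagons) does not destroy the property that the pair of hexagons is still distinguished, which your write-up does not address.
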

\begin{proof}
We begin with the embedding $\Gamma_{D_6}'$ and automorphisms $\theta_1$ and $\theta_2$ from the proof of Proposition \ref{P103_D6_pos} such that $\mathrm{TSG}_+(\Gamma_{D_6}')=\langle \theta_1,\theta_2\rangle=D_6$.  Now let $$\theta_3= (u_3 v_4)(v_3 v_7)(v_6 u_7)(v_9 u_8)(v_2 v_8)(u_2 v_1)(u_3 v_4).$$ Then $\theta_3$ commutes with $\theta_1$ and $\theta_2$, and hence $\langle\theta_1,\theta_2,\theta_3\rangle=D_6\times\mathbb{Z}_2$.  Also, $\theta_3$ is induced on $\Gamma_{D_6}'$ by a reflection through the sphere containing the set of vertices $\{u_{10}, u_5, u_4, u_6, v_5, u_9, u_1\}$.  Thus $D_6\times\mathbb{Z}_2\leq \mathrm{TSG}(\Gamma_{D_6}')$.  

Because of the $4_1$ knots in the edges of the top and bottom hexagons, every homeomorphism of $(S^3,\Gamma_{D6}')$ takes this pair of hexagons to itself.  Thus $\mathrm{TSG}(\Gamma_{D_6}')$ is a proper subgroup of $S_5 \times \mathbb{Z}_2 = \mathrm{Aut}(P(10,3))$.  However, since $D_6 \times \mathbb{Z}_2$ is a maximal subgroup of $S_5 \times \mathbb{Z}_2 $, we have $\mathrm{TSG}(\Gamma_{D_6}')=D_6\times\mathbb{Z}_2$.

Next we replace the $4_1$ knots in the edges of the top and bottom hexagons of $\Gamma_{D6}' $ by the non-invertible knot $8_{17}$ oriented consistently around the two hexagons to get a new embedding $\Gamma_{Z6}$.  Now no homeomorphism of $(S^3,\Gamma_{Z6})$ can turn either hexagon over.  Hence the automorphism $\theta_2$ is not in $\mathrm{TSG}(\Gamma_{Z_6})$, but all of the other elements of $\mathrm{TSG}(\Gamma_{D_6}')$ are in $\mathrm{TSG}(\Gamma_{Z_6})$.  Thus $\mathbb{Z}_6 \times \mathbb{Z}_2\leq \mathrm{TSG}(\Gamma_{Z_6})$.  Now since $\mathbb{Z}_6 \times \mathbb{Z}_2$ is a maximal subgroup of $D_6\times\mathbb{Z}_2$, we have $\mathbb{Z}_6 \times \mathbb{Z}_2= \mathrm{TSG}(\Gamma_{Z_6})$. \end{proof}

Since $3^2\equiv -1\pmod{10}$, by Theorem~\ref{TSG(k2=-1)} we have the following.

\begin{proposition} $\mathbb{Z}_{10} \rtimes \mathbb{Z}_4$ and all of its subgroups are realizable for $P(10,3)$.
\end{proposition}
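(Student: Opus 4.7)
The plan is to obtain this proposition as an immediate corollary of Theorem~\ref{TSG(k2=-1)}, which has already been established in Section~\ref{k^2=-1}. The only verification required is the arithmetic condition $k^{2}\equiv -1 \pmod{n}$ for the pair $(n,k)=(10,3)$. Since $3^{2}=9\equiv -1 \pmod{10}$, this hypothesis is met, and by Frucht, Graver, and Watkins we have $B(10,3)=\mathbb{Z}_{10}\rtimes\mathbb{Z}_{4}$. Theorem~\ref{TSG(k2=-1)} then yields directly that $B(10,3)$ and all of its subgroups are realizable for $P(10,3)$.

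The one subtlety to flag is that $(10,3)$ is an exceptional pair, so $\mathrm{Aut}(P(10,3))=S_{5}\times\mathbb{Z}_{2}$ properly contains $B(10,3)=\mathbb{Z}_{10}\rtimes\mathbb{Z}_{4}$. However, the statement of Theorem~\ref{TSG(k2=-1)} is phrased in terms of $B(n,k)$ rather than $\mathrm{Aut}(P(n,k))$, and its proof constructs the embedding $\Lambda$ together with the isometries $f$ and $Rh$ inducing $\rho$ and $\alpha$ using only the fact that $B(n,k)$ acts in the prescribed way on the inner and outer cycles. Nothing in that argument requires $(n,k)$ to be non-exceptional, so the theorem applies verbatim here.

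Accordingly, the proof will consist of (i) checking the congruence $3^{2}\equiv -1\pmod{10}$ and (ii) citing Theorem~\ref{TSG(k2=-1)}. No additional embeddings, group-theoretic verification, or rigidity arguments are needed, so there is no substantive obstacle to overcome.
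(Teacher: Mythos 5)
Your proposal is correct and matches the paper exactly: the paper's entire justification is the observation that $3^2 \equiv -1 \pmod{10}$ followed by a citation of Theorem~\ref{TSG(k2=-1)}. Your added remark that the theorem is stated for $B(n,k)$ rather than $\mathrm{Aut}(P(n,k))$ and therefore applies to the exceptional pair $(10,3)$ is a worthwhile clarification, and it is consistent with how the paper uses the theorem.
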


The following is a summary of our results for $P(10,3)$.

\begin{theorem}
$\mathrm{Aut}(P(10,3))= S_5 \times \mathbb{Z}_2$ and all of its subgroups are realizable for $P(10,3)$.  Futhermore:
\begin{enumerate}

\item   $\mathbb{Z}_5\rtimes \mathbb{Z}_4$, $\mathbb{Z}_{10} \rtimes \mathbb{Z}_4$, $S_5 \times \mathbb{Z}_2$, $\mathbb{Z}_6 \times \mathbb{Z}_2$, $D_6 \times \mathbb{Z}_2$, $S_4$, $S_5$, $S_4 \times \mathbb{Z}_2$, $\mathbb{Z}_4 \times \mathbb{Z}_2$, and $D_4 \times \mathbb{Z}_2$ are not positively realizable.

\item $D_{10}$, $\mathbb{Z}_{10}$, $D_5$, $\mathbb{Z}_5$, $D_2$, $\mathbb{Z}_2$, $A_5 \times \mathbb{Z}_2$, $A_5$, $A_4\times \mathbb{Z}_2$, $A_4$, $\mathbb{Z}_3$, $\mathbb{Z}_2\times\mathbb{Z}_2\times\mathbb{Z}_2$, $D_4$, $\mathbb{Z}_4$, $D_6$, $\mathbb{Z}_6$, and $D_3$  are positively realizable.
\end{enumerate}
\end{theorem}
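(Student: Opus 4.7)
The plan is to assemble this theorem as a consolidation of all the propositions proved in Section~\ref{P(10,3)}. First I would verify (via Sage or GroupNames) that the isomorphism classes of nontrivial subgroups of $\mathrm{Aut}(P(10,3)) = S_5 \times \mathbb{Z}_2$ have been enumerated completely and that every such class appears in exactly one of parts (1) and (2).

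For realizability of the full group and all of its subgroups, every group listed in part (2) is already positively realizable, hence realizable by the introduction's trick of adding a chiral invertible knot to every edge of a positive realization. The remaining groups from part (1) are handled by the preceding realizability propositions: the unlabelled proposition immediately before Proposition~\ref{S4_real} takes care of $S_5\times\mathbb{Z}_2$ and $S_5$; Proposition~\ref{S4_real} handles $S_4\times\mathbb{Z}_2$ and $S_4$; Proposition~\ref{D4_real} handles $D_4\times\mathbb{Z}_2$ and $\mathbb{Z}_4\times\mathbb{Z}_2$; Proposition~\ref{D6Z2_real} handles $D_6\times\mathbb{Z}_2$ and $\mathbb{Z}_6\times\mathbb{Z}_2$; and Theorem~\ref{TSG(k2=-1)} (invoked because $3^2\equiv -1\pmod{10}$) covers $\mathbb{Z}_{10}\rtimes\mathbb{Z}_4$ and all of its subgroups, in particular $\mathbb{Z}_5\rtimes\mathbb{Z}_4$.

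For the non-positive-realizability claims in part (1), the obstructions fall into two flavors already established. The three groups $\mathbb{Z}_5\rtimes\mathbb{Z}_4$, $\mathbb{Z}_{10}\rtimes\mathbb{Z}_4$, and $S_5\times\mathbb{Z}_2$ each contain $\mathbb{Z}_5\rtimes\mathbb{Z}_4$, which was ruled out by combining the Group Rigidity Theorem (Theorem~\ref{GroupRigid}) with the Involution Theorem (Theorem~\ref{involution}). The remaining seven groups $\mathbb{Z}_6\times\mathbb{Z}_2$, $D_6\times\mathbb{Z}_2$, $S_4$, $S_5$, $S_4\times\mathbb{Z}_2$, $\mathbb{Z}_4\times\mathbb{Z}_2$, and $D_4\times\mathbb{Z}_2$ are excluded by Proposition~\ref{consequences246}, whose proof applies Lemma~\ref{neg246} to distinguished elements identified via Sage in each conjugacy class. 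For part (2), the positive realizability results are distributed across Propositions~\ref{d10}, \ref{P103_pos_D4}, \ref{A5_edge_embed}, \ref{P103_D6_pos}, and \ref{A4Z2_pos_real}, together with Theorem~\ref{SubgroupTheorem} to descend to subgroups where needed.

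The main obstacle is purely combinatorial bookkeeping: certifying that every isomorphism class of subgroup of $S_5\times\mathbb{Z}_2$ has been sorted into exactly one of the two parts with nothing omitted or double-counted. Since the underlying realizability and non-realizability propositions are already proved in the section, once the subgroup lattice of $S_5\times\mathbb{Z}_2$ has been verified with a computer algebra system, the theorem follows immediately from the tabulation.
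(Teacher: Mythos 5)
Your proposal is correct and follows exactly the paper's route: the theorem is stated there as a summary, with realizability of every subgroup following from the positive-realizability propositions (\ref{d10}, \ref{P103_pos_D4}, \ref{A5_edge_embed}, \ref{P103_D6_pos}, \ref{A4Z2_pos_real}, plus Theorem~\ref{SubgroupTheorem} and the knot-addition trick from the introduction) together with the realizability propositions for the groups in part (1), and with part (1) itself coming from the Group Rigidity/Involution argument for $\mathbb{Z}_5\rtimes\mathbb{Z}_4$ and from Lemma~\ref{neg246} via Proposition~\ref{consequences246} for the rest. The only work beyond citation is the Sage/GroupNames verification that the subgroup classes are exhaustively and disjointly sorted into the two lists, which is precisely what the paper relies on as well.
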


\section{Conclusion}

In the table below we summarize our results for $P(n,k)$.  A checkmark in the ``$\mathrm{TSG}(\Gamma)$ complete'' column means that every subgroup of the automorphism group is realizable.  A checkmark $\checkmark$ in the ``$\mathrm{TSG}_+(\Gamma)$ complete'' column means that every subgroup of the automorphism group is positively realizable.  An X means that some subgroups are not realizable or not positively realizable.   The first three rows are in the non-exceptional cases.  The final five rows are all the exceptional cases except for $P(12,5)$ and $P(24,5)$, which will be analyzed in a subsequent paper. 

\begin{table}
\begin{center}
\begin{tabular}{| c | c |  c | c | } \hline
& & & \\
$P(n,k)$ & $\mathrm{Aut}(P(n,k))$ &  $ \mathrm{TSG}_+(\Gamma)$ complete &  $\mathrm{TSG}(\Gamma)$ complete \\ 
& & & \\ \hline
$k^2 \neq \pm 1 \ \mathrm{ mod } \ n$ & $D_n$ & $\checkmark$ & $\checkmark$ \\ \hline
$k^2 =  1 \ \mathrm{ mod } \ n$ & $D_n \rtimes \mathbb{Z}_2$ & $\checkmark$ & $\checkmark$ \\ \hline
$k^2 =  -1 \ \mathrm{ mod } \ n$ & $\mathbb{Z}_n \rtimes \mathbb{Z}_4$ & X & $\checkmark$ \\ \hline
$P(4,1)$ & $S_4 \times \mathbb{Z}_2$ & $\checkmark$ & $\checkmark$ \\ \hline
$P(5,2)$ & $S_5$ & X & X  \\ \hline
$P(8,3)$ & $GL(2,3) \rtimes \mathbb{Z}_2$ & $\checkmark$ & $\checkmark$ \\ \hline
$P(10,2)$ & $A_5 \times \mathbb{Z}_2$  & $\checkmark$ & $\checkmark$ \\ \hline
$P(10,3)$ & $S_5 \times \mathbb{Z}_2$  & X & $\checkmark$ \\ \hline
\end{tabular}
\end{center}\caption{This table summarizes our results.  \emph{Complete}
 means that all of the subgroups are realized.  The first three rows are in the non-exceptional cases}
\end{table}

\section*{Acknowledgements} We thank AIM for its support of the REUF program which enabled us to start this project and supported our meeting at the JMM.  We also thank ICERM for hosting our second meeting.  Finally, we thank the referee for helpful suggestions.

\bigskip

\bibliographystyle{plain}
\bibliography{TSG_GenPet}

\end{document}